\documentclass[a4paper,10pt,oneside]{article}

\usepackage{tikz}
\usetikzlibrary{matrix,arrows,decorations.pathmorphing}
\usepackage{pdfpages}
\usepackage[english]{babel}
\usepackage[T1]{fontenc}
\usepackage{lmodern}
\usepackage[top=3cm,bottom=4cm,right=1.5cm,left=1.5cm]{geometry}
 \usepackage{textcomp} 
 \usepackage{amsmath, amssymb} 
 \usepackage{amsthm}
 \usepackage{graphicx}
 \usepackage{color}

 \usepackage{lmodern}
\renewcommand{\baselinestretch}{1.2}

\usepackage{etoolbox}

\usepackage{hyperref}

\hypersetup{	
     colorlinks=true,       
      breaklinks=true,         
      urlcolor= magenta,       
      linkcolor=blue ,	
      citecolor=blue,	
                       }

\slshape
\newcommand{\R}{\mathbb R }

\newcommand{\N}{\mathbb N}
\newcommand{\C}{\mathbb C}

\theoremstyle{definition}

 \newtheorem{definition}{Definition :}[section]
  
\newtheorem{theo}{Theorem}[section]
   \newtheorem{pro}{Proposition}[section]
    
     \newtheorem{rem}{Remark}[section]
     
     \newtheorem{lem}{Lemma}[section]
     
     \newtheorem{cor}{Corollary}[section]

\numberwithin{equation}{section}

 \begin{document}
\title{A characterization of  the  $L^2$-range of the Poisson transforms on a class of vector bundles over the quaternionic hyperbolic spaces}

\author{
Abdelhamid Boussejra \thanks{e-mail: boussejra.abdelhamid@uit.ac.ma}
Achraf Ouald Chaib\thanks{e-mail:achraf.oualdchaib@uit.ac.ma} \\
\begin{small}
Department of Mathematics,  Faculty of Sciences
\end{small}\\
\begin{small}
University Ibn Tofail, K\'enitra, Morocco
\end{small}}

\date{}
\maketitle
\begin{abstract}
We study the $L^2$-boundedness of the Poisson transforms associated to the   homogeneous vector bundles \\$Sp(n,1)\times_{Sp(n)\times Sp(1)} V_\tau$ over the quaternionic hyperbolic spaces $Sp(n,1)/Sp(n)\times Sp(1)$ associated  with irreducible representations $\tau$ of $Sp(n)\times Sp(1)$ which are trivial on $Sp(n)$. As a consequence, we describe the image of the section space $L^2(Sp(n,1)\times_{Sp(n)\times Sp(1)} V_\tau)$ under the generalized spectral projections associated to  a family of  eigensections of the Casimir operator.
\end{abstract}
\textbf{Keywords}: Vector Poisson transform,  Fourier restriction estimate, Strichartz conjecture.
\section{Introduction}
Let $G$ be 	 a connected real semisimple noncompact Lie group with finite center, and $K$ a maximal compact subgroup. Then $X=G/K$ is a Riemannian symmetric space of noncompact type.  Let $G=KAN$ be an Iwasawa decomposition of $G$, and let $M$ be the centralizer of $A$ in $K$. We write $g=\kappa(g){\rm e}^{H(g)}n(g)$, for each $g\in G$ according to $G=KAN$. 
A central result in harmonic analysis (see \cite{Ka}) asserts that all joint eigenfunctions $F$ of the algebra $\mathbb{D}(X)$ of invariant differential operators, are  Poisson integrals  
$$
F(g)=\mathcal{P}_\lambda f(g):= \int_K {\rm e}^{(i\lambda+\rho)H(g^{-1}k)}f(k)\,{\rm d}k,
$$ 
of a hyperfunction $f$ on $K/M$, for a generic $\lambda\in \mathfrak{a}_c^\ast$ (the complexification of $\mathfrak{a}^\ast$ the real dual of $\mathfrak{a}$).\\
Since then a  characterization of the $L^p$-range of the Poisson transform  was developed in several articles such as  \cite{BI}, \cite{BS}, \cite{B}, \cite{BO}, \cite{I},   \cite{KR}, \cite{KU}, \cite{L}, \cite{SJ},  \cite{S}.\\
The problem of characterizing the image of the Poisson transform $\mathcal{P}_\lambda$ of $L^2(K/M)$ with real and regular spectral parameter $\lambda$ is intimately related to Strichartz conjecture [\cite{S}, Conjecture 4.5] on the uniform $L^2$-boundedness of the generalized spectral projections associated with $\mathbb{D}(X)$.
 To be more specific, consider the generalized spectral projections $\mathcal{Q}_\lambda$ defined initially for $F\in C_c^\infty(X)$ by
\begin{align}\label{spe}
\mathcal{Q}_\lambda F(x)=\mid \mathbf{c}(\lambda)\mid^{-2}\mathcal{P}_\lambda(\mathcal{F}F(\lambda,.)(x), \quad \lambda\in \mathfrak{a}^\ast,
\end{align}
where $\mathcal{F}F$ is the Helgason Fourier transform of $F$ and $\mathbf{c}(\lambda)$ is the Harish-Chandra $c$-function.\\
\textbf{Conjecture} (Strichartz [\cite{S}, Conjecture 4.5]).
There exists a positive constant $C$ such that for any $F_\lambda=\mathcal{Q}_\lambda F$ with $F\in L^2(X)$ we have 
\begin{align}\label{conj1}
C^{-1}\parallel F\parallel^2_{L^2(X)}\leq \sup_{R>0,y\in X}\, \int_{\mathfrak{a}_+^\ast}\frac{1}{R^r}\int_{B(y,R)}\mid F_\lambda(x)\mid^2\, {\rm d}x\, {\rm d}\lambda\leq C \parallel F\parallel^2_{L^2(X)},
\end{align}
and 
\begin{align}\label{conj2}
\parallel F\parallel^2_{L^2(X)}=\gamma_r\lim_{R\rightarrow \infty} \int_{\mathfrak{a}_+^\ast}\frac{1}{R^r}\int_{B(y,R)}\mid F_\lambda(x)\mid^2\, {\rm d}x\, {\rm d}\lambda.
\end{align}
Conversely, if $F_\lambda$ is any family of joint eigenfunctions for which the right hand side of \eqref{conj1} or \eqref{conj2} is finite, then there exists $F\in L^2(X)$ such that $F_\lambda=\mathcal{Q}_\lambda F$ for a.e. $\lambda\in \mathfrak{a}_+^\ast$. \\
Here $r=\textit{rank}\, X$, and $B(y,R)$ denotes the open ball in $X$ of radius $R$ about  $y$. The constant $\gamma_r$ depends on the normalizations of the measures ${\rm d}x$ and ${\rm d}\lambda$.\\
The strichartz conjecture has been recently  settled by Kaizuka, see \cite{KK}.
Most of the proof consists in proving a uniform estimate for the Poisson transform.
More precisely, the following was proved by Kaizuka [\cite{KK}, Theorem 3.3]:\\
Let $F$ be a  joint eigenfunction with eigenvalue corresponding to a real and regular spectral parameter $\lambda$ . Then  
\(F\) is the Poisson transform by $\mathcal{P}_\lambda$ of some \(f\in L^2(K/M)\) if and only if
\begin{align*}
\sup_{R>1}\frac{1}{R^r}\int_{B(0,R)}\mid F(x)\mid^2\,{\rm d}x<\infty. 
\end{align*}
Moreover there exists a positive constant \(C\) independent  of such  \(\lambda\),
\begin{equation*}
C^{-1}\mid \mathbf{c}(\lambda)\mid^2 \parallel f\parallel^2_{L^2(K/M)}\leq \sup_{R>1}\, \frac{1}{R^r}\int_{B(0,R)}\mid \mathcal{P}_\lambda f(x)\mid^2\,{\rm d}x\leq C\mid \mathbf{c}(\lambda)\mid^2 \parallel f\parallel^2_{L^2(K/M)}.
\end{equation*}
The generalization of these results to vector bundles setting has only just begin. In \cite{BIA} we extend Kaizuka result to homogeneous line bundles over non-compact complex Grassmann manifolds (See also \cite{BI1}).\\Our aim in this paper is to generalize theses results to a class of  homogeneous vector bundles over the quaternionic hyperbolic space \(G/K\), where \(G\) is the symplectic group \(Sp(n,1)\) with maximal compact subgroup \(K=Sp(n)\times Sp(1)\).\\
To state our results in rough form, let us first introduce the class of the homogenous vector bundles that we  consider in this paper. Let \(\tau_\nu\) be a unitary irreducible representation of \(Sp(1)\) realized on a $(\nu+1)$-dimensional Hilbert  space $(V, (.,\, .)_\nu)$. We extend $\tau_\nu$ to a representation of $K$ by setting $\tau_\nu\equiv 1$ on $Sp(n)$. 
As usual the space of sections of the homogeneous vector bundle $G\times_K V$  associated with $\tau_\nu$ will be identified with the space $\Gamma(G,\tau_\nu)$ of vector valued functions  $\displaystyle F: G\rightarrow V_\nu$ which  are right $K$-covariant of type $\tau_\nu$, i.e.,
\begin{align}\label{cov}
F(gk)=\tau_\nu(k)^{-1}F(g), \quad \forall g\in G, \quad \forall k\in K.
\end{align}
We denote by $C^\infty(G,\tau_\nu)$ and  $C_c^\infty(G,\tau_\nu)$  the elements of $\Gamma(G,\tau_\nu)$ that are respectively smooth, smooth with compact support in $G$, and by $L^2(G,\tau_\nu)$ the elements of $\Gamma(G,\tau_\nu)$ such that 
\begin{align*}
\parallel F\parallel_{L^2(G,\tau_\nu)}=\left(\int_{G/K} \parallel F(g)\parallel^2_\nu\, {\rm d}g_K\right)^{\frac{1}{2}}<\infty.
\end{align*}
In above $\parallel .\,\parallel_\nu$ is the norm in $V_\nu$ and $\parallel F(gK)\parallel_\nu=\parallel F(g)\parallel_\nu$ is well defined for $F$ satisfying \eqref{cov}.\\
Let $\sigma_\nu$ denote the restriction of $\tau_\nu$ to the group $M\simeq Sp(n-1)\times Sp(1)$. Over $K/M$ we have the associated homogeneous vector bundle $K\times _M V_\nu$ with $L^2$-sections identified with $L^2(K,\sigma_\nu)$  the space  of all functions $f:K\rightarrow V_\nu$ which are $M$-covariant of type $\sigma_\nu$ and satisfy
\begin{align*}
\parallel f\parallel^2_{L^2(K,\sigma_\nu)}=\int_K \parallel f(k)\parallel_\nu^2 {\rm d}k<\infty,
\end{align*}
where ${\rm d}k$ is the normalized Haar measure of $K$.\\
For $\lambda\in \mathbb{C}$ and $f\in L^2(K,\sigma_\nu)$, the Poisson transform $\mathcal{P}^\nu_\lambda f$    is defined by 
\begin{align*}
\mathcal{P}^\nu_\lambda f(g)=\int_K {\rm e}^{-(i\lambda+\rho)H(g^{-1}k)}\tau_\nu(\kappa(g^{-1}k))f(k)\,{\rm d}k
\end{align*}  
Let $\Omega$ denote  the Casimir element  of the Lie algebra $\mathfrak{g}$ of $G$, viewed as a differential operator acting on $C^\infty(G,\tau)$. Then the image $\mathcal{P}^\nu_\lambda(L^2(K,\sigma_\nu))$ is a proper closed subspace of $\mathcal{E}_\lambda(G,\tau_\nu)$ the space of all $F\in C^\infty(G,\tau_\nu)$ satisfying 
\begin{align*}
\Omega\,F=-(\lambda^2+\rho^2-\nu(\nu+2))F.
\end{align*}
For more details see section 2.\\
For $\lambda\in \mathbb{R}\setminus\{0\}$, we define a weighted $L^2$-space $\mathcal{E}_\lambda^2(G,\tau_\nu)$ consisting  of all   $F$ in $\mathcal{E}_{\lambda}(G,\tau_\nu)$  that satisfy 
\begin{align*}
\parallel F\parallel_\ast=\sup_{R > 1}\left(\frac{1}{R}\int_{B(R)}\|F(g)\|_\nu^{2}\, {\rm d}g_K\right)^{\frac{1}{2}}<\infty.
\end{align*}
Our first main result is an image characterization of the Poisson transform $\mathcal{P}_\lambda^\nu$ of $L^2(K,\sigma_\nu)$ for $\lambda\in \mathbb{R}\setminus \{0\}$.
\begin{theo}\label{range}
Let  \(\lambda \in \mathbb{R} \backslash\{0\}$ and  $\nu$ a nonnegative integer.
\begin{enumerate}
\item[(i)] There exists a positive constant \(C_\nu\) independent of \(\lambda\) such that for $f\in L^2(K, \sigma_\nu)$ we have
\begin{equation}\label{Poisson estimates}
C_{\nu}^{-1}|\mathbf{c}_\nu(\lambda)|\, \|f\|_{L^{2}(K,\sigma_\nu)}\leq \|\mathcal{P}_\lambda^\nu f\|_\ast\leq C_{\nu}|\mid \mathbf{c}_\nu(\lambda)\mid \, \|f\|_{L^{2}(K,\sigma_\nu)},
\end{equation}
with
\[
\mathbf{c}_\nu(\lambda)=2^{\rho-i\lambda}\frac{\Gamma(\rho-1)\Gamma(i\lambda)}{\Gamma(\frac{i\lambda + \nu +\rho}{2})\Gamma(\frac{i\lambda+\rho-\nu-2}{2})}.
\]
Furthermore we have the following Plancherel type formula for the Poisson transform
\begin{equation}\label{estimate00}
\lim_{R\rightarrow +\infty}\dfrac{1}{R}\int_{B(R)}\Vert \mathcal{P}_\lambda^\nu f(g)\Vert_\nu^{2}\,{\rm d}g_K=2\mid \mathbf{c}_\nu(\lambda)\mid^2\left\|f\right\|_{L^{2}(K,\sigma_\nu)}^2.
\end{equation}
\item[ii)] $\mathcal{P}_\lambda^\nu$ is a topological isomorphism from \(L^2(K,\sigma_\nu)\) onto \(\mathcal{E}^2_\lambda(G,\tau_\nu)\).
\end{enumerate}
\end{theo}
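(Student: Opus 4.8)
The plan is to reduce the vector-bundle problem to a scalar radial analysis via the $K$-type decomposition of $L^2(K,\sigma_\nu)$, obtain precise asymptotics for the Poisson transform along the radial direction, and then run an $L^2$-harmonic-analysis argument in the spirit of Kaizuka's scalar proof. First I would expand $f\in L^2(K,\sigma_\nu)$ in a basis adapted to the decomposition of $\mathrm{Ind}_M^K\sigma_\nu$ into $K$-irreducibles; since $\tau_\nu$ is trivial on $Sp(n)$, the relevant $K$-types and the intertwining structure are governed by $Sp(1)$-representation theory, so each isotypic component reduces the Poisson transform to a one-dimensional integral against a hypergeometric-type kernel. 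On each such component $\mathcal{P}_\lambda^\nu$ acts, after separation of variables, by a scalar function of the geodesic distance $t=|H(g)|$ whose large-$t$ behaviour is of the form $\mathbf{c}_\nu(\lambda)\,e^{(i\lambda-\rho)t}(\cdots)+\mathbf{c}_\nu(-\lambda)\,e^{(-i\lambda-\rho)t}(\cdots)+o(e^{-\rho t})$; deriving this asymptotic expansion uniformly in $\lambda$ on $\mathbb R\setminus\{0\}$ (with explicit control of the error in terms of $|\mathbf{c}_\nu(\lambda)|$) is the analytic heart of the matter.

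Given that expansion, the upper bound in \eqref{Poisson estimates} follows by inserting the asymptotics into $\frac1R\int_{B(R)}\|\mathcal{P}_\lambda^\nu f\|_\nu^2\,dg_K$, using the polar-coordinate Jacobian $\sim e^{2\rho t}$ on $G/K$ to cancel the $e^{-2\rho t}$ decay, and recognizing the surviving integral as an average of $\|\cdot\|_{L^2(K,\sigma_\nu)}^2$ of the boundary data; the oscillatory cross terms $e^{\pm 2i\lambda t}$ integrate to lower order in $R$ and the spherical average over the $K$-part collapses by Schur orthogonality. The same computation, taken in the limit $R\to\infty$, yields the Plancherel-type identity \eqref{estimate00} with the explicit constant $2|\mathbf{c}_\nu(\lambda)|^2$, once one checks that the remainder term contributes $o(R)$. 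For the lower bound one argues that the leading term cannot be cancelled: the map $f\mapsto$ (leading radial profile) is, up to the factor $\mathbf{c}_\nu(\lambda)$, an isometry onto its image in the relevant $L^2$ space of boundary sections, so a reverse inequality holds with a constant independent of $\lambda$.

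For part (ii), boundedness of $\mathcal{P}_\lambda^\nu\colon L^2(K,\sigma_\nu)\to\mathcal{E}^2_\lambda(G,\tau_\nu)$ and injectivity are immediate from (i) (the left inequality gives injectivity and closed range). The substantial point is surjectivity: given $F\in\mathcal{E}^2_\lambda(G,\tau_\nu)$, one must produce boundary data $f\in L^2(K,\sigma_\nu)$ with $\mathcal{P}_\lambda^\nu f=F$. Here I would use that $F$ is an eigensection of the Casimir operator, hence admits an Eisenstein-type asymptotic expansion with a leading coefficient (a hyperfunction, a priori, on $K/M$ of type $\sigma_\nu$); the uniform bound $\|F\|_\ast<\infty$ forces this coefficient to lie in $L^2(K,\sigma_\nu)$ by a Fatou-type / boundary-value argument combined with the already-established asymptotics, and the injectivity of $\mathcal{P}_\lambda^\nu$ on hyperfunctions (generic $\lambda$, here $\lambda\in\mathbb R\setminus\{0\}$ is regular) identifies $F$ with $\mathcal{P}_\lambda^\nu f$. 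The open mapping theorem then upgrades the algebraic isomorphism to a topological one.

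The main obstacle I anticipate is the \emph{uniformity in $\lambda$} of the radial asymptotics and the error estimates: one needs the expansion of the elementary spherical-type functions attached to $\tau_\nu$ with remainder controlled by $|\mathbf{c}_\nu(\lambda)|$ across all of $\mathbb R\setminus\{0\}$, including the behaviour as $\lambda\to 0$ and $\lambda\to\infty$, and this requires careful handling of the Gamma-factor quotients in $\mathbf{c}_\nu(\lambda)$ together with uniform bounds on the relevant $\,_2F_1$'s. The second delicate point is the surjectivity argument, where passing from the a priori hyperfunction boundary value to an $L^2$ one must be done without circularity, using only the growth hypothesis and the mapping properties already proved.
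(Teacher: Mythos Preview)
Your overall architecture---asymptotics of the Poisson transform to get the Plancherel identity and the lower bound, then Olbrich's hyperfunction bijectivity plus a $K$-type expansion to upgrade the boundary value to $L^2$ for surjectivity---matches the paper's proof closely. The surjectivity argument you sketch is exactly what the paper does: write $F=\mathcal{P}_\lambda^\nu f$ with $f$ a hyperfunction, expand in the orthonormal basis $\{f_{ij}^\delta\}$ of $L^2(K,\sigma_\nu)$, and use the radial asymptotic of the generalized Eisenstein integrals $\Phi_{\lambda,\delta}^{L}$ together with Schur orthogonality to bound the Fourier coefficients by $\|F\|_\ast$.

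The one substantive divergence is the \emph{upper bound} in \eqref{Poisson estimates}. You propose to obtain it from the asymptotic expansion itself, and you correctly flag the obstacle: this would require the expansion and its remainder to be controlled \emph{uniformly in $\lambda$}, which is delicate (and is not what the paper's Key Lemma actually provides---that asymptotic is pointwise in $\lambda$, with constants depending on $\lambda$). The paper bypasses this entirely. It proves a Fourier restriction estimate (Proposition~\ref{rest Fourier}): for $F$ supported in $B(R)$,
\[
\|\mathcal{F}_\nu F(\lambda,\cdot)\|_{L^2(K)}\le C_\nu\,|\mathbf{c}_\nu(\lambda)|\,R^{1/2}\,\|F\|_{L^2(G,\tau_\nu)},
\]
obtained via the bundle-valued Radon transform, the estimate $|\mathbf{c}_\nu(\lambda)|^{-1}\asymp(1+\lambda^2)^{(2\rho-3)/4}$, and a compactly supported Fourier multiplier trick (Anker's lemma). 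The upper Poisson bound then follows in one line by duality, since $\mathcal{P}_\lambda^\nu$ and $\mathcal{F}_\nu(\cdot)(\lambda,\cdot)$ are adjoint for real $\lambda$. This gives the constant $C_\nu$ independent of $\lambda$ for free, without any uniform control on the asymptotic remainder. So the obstacle you anticipate is real for your route but is simply avoided in the paper; if you want a self-contained argument you should incorporate the restriction estimate rather than push for uniform asymptotics.
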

This generalizes the result of Kaizuka [\cite{KK}, (i) and (ii) in Theorem 3.3] which corresponds to \(\tau_\nu\) trivial.\\

\textbf{Consequence}\\
For $\lambda\in \mathbb{R}$ we define the space
$$
\mathcal{E}^\ast_{\lambda}(G,\tau_\nu)=\{F\in \mathcal{E}_\lambda(G,\tau_\nu): M(F)<\infty\},
$$
where
$$
M(F)=\lim\sup_{R\rightarrow \infty}\left(\frac{1}{R}\int_{B(R)}\parallel F(g)\parallel_\nu^2\, {\rm d}g_K\right)^\frac{1}{2}.
$$
Then as an immediate consequence of Theorem \ref{range} we obtain the following result which  generalizes a conjecture of W. Bray \cite{Br} which corresponds to $\tau_\nu$ trivial.
\begin{cor}
If $\lambda\in \mathbb{R}\setminus\{0\}$ then $\mathcal{E}^\ast_{\lambda}(G,\tau_\nu),M)$ is a Banach space.
\end{cor}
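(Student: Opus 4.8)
The plan is to reduce the corollary to Theorem \ref{range}. For $\lambda\in\R\setminus\{0\}$ the function $\mathbf{c}_\nu(\lambda)$ is nonzero, so the idea is to identify $(\mathcal{E}^\ast_\lambda(G,\tau_\nu),M)$, up to equivalence of norms, with $(\mathcal{E}^2_\lambda(G,\tau_\nu),\|\cdot\|_\ast)$, and the latter is complete because $\mathcal{P}^\nu_\lambda$ is a topological isomorphism from the Hilbert space $L^2(K,\sigma_\nu)$ onto it.

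First I would check that $\mathcal{E}^\ast_\lambda(G,\tau_\nu)$ and $\mathcal{E}^2_\lambda(G,\tau_\nu)$ have the same underlying set. The inclusion $\mathcal{E}^2_\lambda\subseteq\mathcal{E}^\ast_\lambda$ is immediate from $M(F)\le\|F\|_\ast$. For the converse one uses that every $F\in\mathcal{E}_\lambda(G,\tau_\nu)$ is smooth, so $g_K\mapsto\|F(g)\|_\nu^2$ is continuous and hence integrable on each relatively compact ball $B(R)$; if $M(F)<\infty$, pick $R_1>1$ with $\frac1R\int_{B(R)}\|F(g)\|_\nu^2\,{\rm d}g_K\le (M(F)+1)^2$ for all $R\ge R_1$, while for $1<R\le R_1$ the same quantity is bounded by $\int_{B(R_1)}\|F(g)\|_\nu^2\,{\rm d}g_K<\infty$. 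Taking the supremum over $R>1$ gives $\|F\|_\ast<\infty$, so $F\in\mathcal{E}^2_\lambda$. By Theorem \ref{range}(ii) this common space equals $\mathcal{P}^\nu_\lambda\big(L^2(K,\sigma_\nu)\big)$.

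Next I would verify that $M$ is a norm equivalent to $\|\cdot\|_\ast$ on this space. Subadditivity of $M$ follows from Minkowski's inequality for $\|\cdot\|_{L^2(B(R),{\rm d}g_K)}$ together with $\limsup(a_R+b_R)\le\limsup a_R+\limsup b_R$, and homogeneity is clear, so $M$ is a priori a seminorm. Writing $F=\mathcal{P}^\nu_\lambda f$ with $f\in L^2(K,\sigma_\nu)$, the Plancherel type identity \eqref{estimate00} gives $M(F)=\sqrt{2}\,|\mathbf{c}_\nu(\lambda)|\,\|f\|_{L^2(K,\sigma_\nu)}$, while the two-sided estimate \eqref{Poisson estimates} gives $C_\nu^{-1}|\mathbf{c}_\nu(\lambda)|\,\|f\|_{L^2(K,\sigma_\nu)}\le\|F\|_\ast\le C_\nu|\mathbf{c}_\nu(\lambda)|\,\|f\|_{L^2(K,\sigma_\nu)}$. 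Since $\mathbf{c}_\nu(\lambda)\ne0$, combining these yields constants $c_1,c_2>0$ depending only on $\nu$ with $c_1\|F\|_\ast\le M(F)\le c_2\|F\|_\ast$; in particular $M(F)=0$ forces $F=0$, so $M$ is indeed a norm, equivalent to $\|\cdot\|_\ast$.

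Finally, since $L^2(K,\sigma_\nu)$ is complete and $\mathcal{P}^\nu_\lambda$ is, by Theorem \ref{range}(ii), a topological isomorphism onto $\big(\mathcal{E}^2_\lambda(G,\tau_\nu),\|\cdot\|_\ast\big)$, the latter is a Banach space; equivalence of $M$ with $\|\cdot\|_\ast$ (same Cauchy sequences, same limits) then transfers completeness to $\big(\mathcal{E}^\ast_\lambda(G,\tau_\nu),M\big)$. I expect the only non-formal point to be the first step, namely using the local $L^2$-finiteness of eigensections to upgrade a finite $\limsup$ to a finite supremum; the remaining steps are a direct reading of Theorem \ref{range}.
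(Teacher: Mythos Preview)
Your argument is correct and is precisely the intended one: the paper states the corollary as an \emph{immediate consequence} of Theorem~\ref{range} without further details, and your three steps (equality of the underlying sets via local $L^2$-finiteness of eigensections, equivalence of $M$ with $\|\cdot\|_\ast$ via \eqref{estimate00} and \eqref{Poisson estimates}, and transfer of completeness through the isomorphism $\mathcal{P}^\nu_\lambda$) are exactly how one unpacks that implication.
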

\begin{rem}
In the case of the trivial bundle (the scalar case)  the conjecture of Bray was proved  by Ionescu \cite{I} for all rank one symmetric spaces . It was generalized to Riemannian symmetric spaces of higher rank by Kaizuka, see \cite{KK}.
\end{rem} 
Next, let us introduce our second main result on the $L^2$-range of the generalized spectral projections.\\
For $F\in C^\infty_c(G,\tau_\nu)$ the vector valued Helgason-Fourier  transform $\mathcal{F}_\nu F$   is given by (see \cite{com})
$$
\mathcal{F}_\nu\, F(\lambda,k)=\int_G {\rm e}^{(i\lambda-\rho)H(g^{-1}k)}\tau_\nu(\kappa(g^{-1}k)^{-1})   F(g)\,{\rm d}g  \quad \lambda\in \mathbb{C},
$$
Then the following inversion formula holds  (see section 4)
\begin{equation}\begin{split}\label{inversion}
\quad F(g)=\frac{1}{2\pi}&\int_0^\infty\int_K {\rm e}^{-(i\lambda+\rho)H(g^{-1}k)}\tau_\nu(\kappa(g^{-1}k))\mathcal{F}_\nu F(\lambda,k)\,\mid \mathbf{c}_\nu(\lambda)\mid^{-2}{\rm d}\lambda\,{\rm d}k\\
&+\sum_{\lambda_j\in D_\nu} d_\nu(\lambda_j)\int_K {\rm e}^{-(i\lambda_j+\rho)H(g^{-1}k)}\tau_\nu(\kappa(g^{-1}k))\mathcal{F}_\nu F(\lambda_j,k)\,{\rm d}k.
\end{split}\end{equation} 
In above $\displaystyle d_\nu(\lambda)=-i\textit{Res}_{\mu=\lambda}(\mathbf{c}_\nu(\mu)\mathbf{c}_\nu(-\mu))^{-1}, \lambda\in D_\nu $ and $D_\nu$ is a finite set in $\{\lambda\in \mathbb{C}; \Im(\lambda)>0\}$ which parametrizes 
the  $\tau_\nu$-spherical functions arising from the discrete series of $G$. It is empty if $\nu\leq \rho-2$.\\
The formula \eqref{inversion} gives rise to the decomposition of $L^2(G,\tau_\nu)$ into a continuous part and a discrete part:
\begin{align*}
L^2(G,\tau_\nu)=L_{\textit{cont}}^2(G,\tau_\nu)\oplus L_{\textit{disc}}^2(G,\tau_\nu) 
\end{align*}
Our aim here is to study the operator $\mathcal{Q}^\nu_\lambda$, $\lambda\in \mathbb{R}$, defined for $F\in L_{\textit{cont}}^2(G,\tau_\nu)\cap C_c^\infty(C,\tau_\nu)$ by 
\begin{align}\label{specv}
\mathcal{Q}^\nu_\lambda F(g)=\mid \mathbf{c}_\nu(\lambda)\mid^{-2}\mathcal{P}^\nu_\lambda[\mathcal{F}_\nu\, F(\lambda,.)](g),
\end{align}
More precisely, following Strichartz idea, we are interested in the following question:\\ Characterize those $F_\lambda\in \mathcal{E}_\lambda(G,\tau_\nu)$ ($\lambda\in (0,\infty)$) for which there exists $F\in L_{\textit{cont}}^2(G,\tau_\nu)$ such that $F_\lambda=\mathcal{Q}^\nu_\lambda F$.\\
To do so, we introduce  the space $\mathcal{E}^2_+(G,\tau_\nu)$ consisting of all $V_{\tau_\nu}$-valued measurable functions $\psi$ on $(0,\infty)\times G$ such that 
\begin{enumerate}
\item[(i)] $\Omega\, \psi(\lambda,.)=-(\lambda^2+\rho^2-\nu(\nu+2))\,\psi(\lambda,.)$ a.e. $\lambda\in (0,\infty)$
\item[(ii)] $\parallel \psi\parallel_+<\infty$.
\end{enumerate}
where
$$
\parallel \psi\parallel^2_+=\sup_{R>1}\int_0^\infty\frac{1}{R}\int_{B(R)}\parallel\psi(\lambda,g) \parallel^2_\nu\,{\rm d}g_K\,{\rm d}\lambda.
$$
The second main result we prove in this paper can be stated as follows 
\begin{theo}\label{spectre}
\begin{enumerate}
\item[(i)] There exists a positive constant $C$ such that for $F\in L^2(G, \tau_\nu)$ we have
\begin{align}\label{spectral2}
C^{-1}\parallel F\parallel_{L^2(G,\tau_\nu)}\leq \parallel \mathcal{Q}^\nu_\lambda F\parallel_+\leq C\parallel F\parallel_{L^2(G,\tau_\nu)}
\end{align}
Furthermore we have 
\begin{align}\label{spectral3}
\lim_{R\rightarrow \infty}\int_0^\infty\frac{1}{R}\int_{B(R)}\parallel \mathcal{Q}^\nu_\lambda F(g) \parallel^2_\nu\,{\rm d}g_K\,{\rm d}\lambda=2\parallel F\parallel^2_{L^2(G,\tau_\nu)}
\end{align}
\item[(ii)] The linear map $\mathcal{Q}^\nu_\lambda $  is a topological isomorphism from $ L_{\textit{cont}}^2(G,\tau_\nu)$ onto  $\mathcal{E}^2_+(G,\tau_\nu)$.
\end{enumerate}
\end{theo}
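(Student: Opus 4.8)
The plan is to deduce Theorem~\ref{spectre} from Theorem~\ref{range}, the inversion formula~\eqref{inversion}, and the $L^2$-Plancherel theorem for the Helgason--Fourier transform $\mathcal{F}_\nu$ that underlies~\eqref{inversion} (Section~4). Write $\mathcal{H}$ for the space of measurable $V_{\tau_\nu}$-valued fields $\varphi$ on $(0,\infty)\times K$ such that $\varphi(\lambda,\cdot)\in L^2(K,\sigma_\nu)$ for a.e.\ $\lambda$ and $\|\varphi\|_{\mathcal{H}}^2:=\int_0^\infty |\mathbf{c}_\nu(\lambda)|^{-2}\,\|\varphi(\lambda,\cdot)\|_{L^2(K,\sigma_\nu)}^2\,{\rm d}\lambda<\infty$; the Plancherel theorem asserts that $F\mapsto\mathcal{F}_\nu F(\lambda,\cdot)$ is an isometric isomorphism of $L^2_{\textit{cont}}(G,\tau_\nu)$ onto $\mathcal{H}$ (with ${\rm d}\lambda$ suitably normalized). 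Via this identity the operator $\mathcal{Q}^\nu_\lambda F=|\mathbf{c}_\nu(\lambda)|^{-2}\mathcal{P}^\nu_\lambda[\mathcal{F}_\nu F(\lambda,\cdot)]$ makes sense for every $F\in L^2_{\textit{cont}}(G,\tau_\nu)$ and, for a.e.\ $\lambda>0$, $\mathcal{Q}^\nu_\lambda F\in\mathcal{E}_\lambda(G,\tau_\nu)$ since the Poisson transform maps into it; thus the Casimir condition in the definition of $\mathcal{E}^2_+(G,\tau_\nu)$ holds for free and only the growth bound must be controlled.

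\textbf{Part (i): the estimates and the limit formula.} For $F\in L^2_{\textit{cont}}(G,\tau_\nu)$, every $R>1$, and a.e.\ $\lambda$, the right inequality in~\eqref{Poisson estimates} (whose constant $C_\nu$ does not depend on $\lambda$) gives
\[
\tfrac1R\int_{B(R)}\|\mathcal{Q}^\nu_\lambda F(g)\|_\nu^2\,{\rm d}g_K\ \le\ \|\mathcal{Q}^\nu_\lambda F\|_\ast^2\ =\ |\mathbf{c}_\nu(\lambda)|^{-4}\big\|\mathcal{P}^\nu_\lambda[\mathcal{F}_\nu F(\lambda,\cdot)]\big\|_\ast^2\ \le\ C_\nu^2\,|\mathbf{c}_\nu(\lambda)|^{-2}\,\|\mathcal{F}_\nu F(\lambda,\cdot)\|_{L^2(K,\sigma_\nu)}^2 .
\]
Integrating in $\lambda$, taking the supremum over $R$, and using Plancherel yields $\|\mathcal{Q}^\nu_\lambda F\|_+\le C_\nu\|F\|_{L^2(G,\tau_\nu)}$, the upper bound of~\eqref{spectral2}. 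For the lower bound and~\eqref{spectral3}, apply the Plancherel-type formula~\eqref{estimate00} with $f=\mathcal{F}_\nu F(\lambda,\cdot)$ to obtain, for a.e.\ $\lambda$,
\[
\lim_{R\to\infty}\tfrac1R\int_{B(R)}\|\mathcal{Q}^\nu_\lambda F(g)\|_\nu^2\,{\rm d}g_K\ =\ 2\,|\mathbf{c}_\nu(\lambda)|^{-2}\,\|\mathcal{F}_\nu F(\lambda,\cdot)\|_{L^2(K,\sigma_\nu)}^2 .
\]
Since the left-hand side is dominated, uniformly in $R$, by the $\lambda$-integrable majorant of the previous display, dominated convergence gives
\[
\lim_{R\to\infty}\int_0^\infty\tfrac1R\int_{B(R)}\|\mathcal{Q}^\nu_\lambda F(g)\|_\nu^2\,{\rm d}g_K\,{\rm d}\lambda\ =\ 2\int_0^\infty |\mathbf{c}_\nu(\lambda)|^{-2}\,\|\mathcal{F}_\nu F(\lambda,\cdot)\|_{L^2(K,\sigma_\nu)}^2\,{\rm d}\lambda\ =\ 2\|F\|_{L^2(G,\tau_\nu)}^2 ,
\]
which is~\eqref{spectral3}; and since a convergent limit is at most the supremum defining $\|\cdot\|_+$, this also gives $\|\mathcal{Q}^\nu_\lambda F\|_+^2\ge 2\|F\|_{L^2(G,\tau_\nu)}^2$. (The passage from $F\in L^2_{\textit{cont}}\cap C^\infty_c$ to general $F\in L^2_{\textit{cont}}(G,\tau_\nu)$ is by density, using the already-proved upper bound.) In particular $\mathcal{Q}^\nu_\lambda$ is injective with closed range and maps $L^2_{\textit{cont}}(G,\tau_\nu)$ into $\mathcal{E}^2_+(G,\tau_\nu)$.

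\textbf{Part (ii): surjectivity onto $\mathcal{E}^2_+(G,\tau_\nu)$.} Let $\psi\in\mathcal{E}^2_+(G,\tau_\nu)$. I first claim $\psi(\lambda,\cdot)\in\mathcal{E}^2_\lambda(G,\tau_\nu)$ for a.e.\ $\lambda$. Indeed, for any eigensection $\phi\in\mathcal{E}_\lambda(G,\tau_\nu)$ the average $\frac1R\int_{B(R)}\|\phi(g)\|_\nu^2\,{\rm d}g_K$ has a limit in $[0,+\infty]$ as $R\to\infty$, finite precisely when $\phi\in\mathcal{E}^2_\lambda(G,\tau_\nu)$ (this is part of the analysis carried out for Theorem~\ref{range}, and in the finite case the limit equals $2|\mathbf{c}_\nu(\lambda)|^2\|f\|^2$ where $\phi=\mathcal{P}^\nu_\lambda f$, by~\eqref{estimate00}); applying Fatou's lemma to the inequality $\int_0^\infty\frac1R\int_{B(R)}\|\psi(\lambda,g)\|_\nu^2\,{\rm d}g_K\,{\rm d}\lambda\le\|\psi\|_+^2$, valid for all $R>1$, shows this limit is finite for a.e.\ $\lambda$, which proves the claim. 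By Theorem~\ref{range}(ii) there is then, for a.e.\ $\lambda$, a unique $f_\lambda\in L^2(K,\sigma_\nu)$ with $\mathcal{P}^\nu_\lambda f_\lambda=\psi(\lambda,\cdot)$; the map $\lambda\mapsto f_\lambda$ is measurable because the inverse of $\mathcal{P}^\nu_\lambda$ supplied by the proof of Theorem~\ref{range} is an $L^2(K,\sigma_\nu)$-limit of expressions depending measurably on $\lambda$. Set $\varphi(\lambda,\cdot):=|\mathbf{c}_\nu(\lambda)|^2 f_\lambda$. Then, using~\eqref{estimate00} and Fatou once more,
\[
\|\varphi\|_{\mathcal{H}}^2=\int_0^\infty |\mathbf{c}_\nu(\lambda)|^2\,\|f_\lambda\|_{L^2(K,\sigma_\nu)}^2\,{\rm d}\lambda=\tfrac12\int_0^\infty\Big(\lim_{R\to\infty}\tfrac1R\int_{B(R)}\|\psi(\lambda,g)\|_\nu^2\,{\rm d}g_K\Big)\,{\rm d}\lambda\le\tfrac12\|\psi\|_+^2<\infty ,
\]
so $\varphi\in\mathcal{H}$. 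Let $F:=\mathcal{F}_\nu^{-1}\varphi\in L^2_{\textit{cont}}(G,\tau_\nu)$; then $\mathcal{Q}^\nu_\lambda F=|\mathbf{c}_\nu(\lambda)|^{-2}\mathcal{P}^\nu_\lambda[|\mathbf{c}_\nu(\lambda)|^2 f_\lambda]=\mathcal{P}^\nu_\lambda f_\lambda=\psi(\lambda,\cdot)$ for a.e.\ $\lambda$. Combined with part (i), $\mathcal{Q}^\nu_\lambda$ is a bounded bijection of $L^2_{\textit{cont}}(G,\tau_\nu)$ onto $\mathcal{E}^2_+(G,\tau_\nu)$ with bounded inverse, which proves (ii).

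\textbf{Where the difficulty lies.} Once Theorem~\ref{range} and the Section~4 Plancherel theorem are available, the argument is mainly a matter of measure-theoretic care: the two interchanges of a supremum, resp.\ a limit, in $R$ with the $\lambda$-integration — handled by dominated convergence in the first case and by Fatou's lemma in the second — the a.e.-in-$\lambda$ reduction $\psi(\lambda,\cdot)\in\mathcal{E}^2_\lambda(G,\tau_\nu)$, which relies on the existence of the limit $\lim_{R\to\infty}\frac1R\int_{B(R)}\|\phi\|_\nu^2\,{\rm d}g_K$ for individual eigensections $\phi$, and the measurable dependence $\lambda\mapsto f_\lambda$. The genuinely substantial ingredient, proved in Section~4 and used crucially in part (ii), is the \emph{surjectivity} half of the Plancherel theorem for $\mathcal{F}_\nu$ on $L^2_{\textit{cont}}(G,\tau_\nu)$: every $\varphi\in\mathcal{H}$ equals $\mathcal{F}_\nu F$ for some $F\in L^2_{\textit{cont}}(G,\tau_\nu)$.
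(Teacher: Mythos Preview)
Your proof is correct and follows essentially the same approach as the paper: the upper bound in \eqref{spectral2} and the limit formula \eqref{spectral3} come from Theorem~\ref{range} together with the Plancherel formula via dominated convergence, the lower bound is read off from \eqref{spectral3}, and surjectivity is obtained by inverting $\mathcal{P}^\nu_\lambda$ pointwise in $\lambda$ through Theorem~\ref{range}(ii) and then invoking the surjectivity of $\mathcal{F}_\nu$ (Theorem~\ref{fourier}(ii)). If anything you are more careful than the paper at one point of the surjectivity argument: the paper simply asserts that $\sup_{R>1}\frac{1}{R}\int_{B(R)}\|\psi(\lambda,g)\|_\nu^2\,{\rm d}g_K<\infty$ for a.e.\ $\lambda$, whereas you justify this via Fatou and the dichotomy (implicit in the proof of Theorem~\ref{range}(ii)) that for an eigensection the averaged $L^2$-norm either stays bounded or has $\liminf=+\infty$.
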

This extends Kaizuka result [ \cite{KK}, (i) and (ii) in  Theorem 3.6]   on the Strichartz conjecture (see  \cite{S} Conjecture 4.5] to the class of vector bundles considered here.\\
Before giving the outline of the paper, let us mention that  a number of authors have obtained an image characterization for the Poisson transform $ \mathcal{P}_\lambda$ ($\lambda\in \mathfrak{a}^\ast\setminus \{0\}$)  of $L^2$-functions on $K/M$ in the rank one case, see  [\cite{BI}, \cite{BS}, \cite{BO}, \cite{I}]. Nevertheless, the obtained characterization  is  weaker than the one conjectured by Strichartz. The approach taken in the quoted papers is based on the theory of Calderon-Zygmund singular integrals (see also  \cite{KU}). Using a different approach based on the techniques used in the scattering theory, Kaizuka \cite{KK} settled the Strichartz conjecture on  Riemannian symmetric spaces of noncompact type, of arbitrary rank.\\
We now describe the contents of this paper. The proofs of  our results are a generalisation of  Kaizuka's method \cite{KK}. In section 2 we recall some basic facts on the quaternionc hyperbolic spaces and introduce the vector Poisson transforms. In section 3, we define the Helgason-Fourier transform on the vector bundles $G\times_K V_\nu$ and give the inversion and Plancherel Theorem. 
The proof of Theorem \ref{spectre} follows from the 
Plancherel formula and Theorem \ref{range}. The  main ingredients in proving Theorem \ref{range} are a Fourier restriction estimate for the vector valued Helgason-Fourier transform (Proposition \ref{rest Fourier} in section 4) and an asymptotic formula for the vector Poisson transform in the framework of Agmon-H\"ormander spaces \cite{AH} (Theorem \ref{asympt}). The proof of Theorem \ref{asympt} will be derived from the Key lemma of this paper giving the asymptotic behaviour of the translate of the $\tau_\nu$-spherical functions.  Section 6 is devoted to the proof of  our main results. In section 7 we prove the Key Lemma. 
\section{Preliminaries}
\subsection{The quaternionic hyperbolic space} 
Let \(G=Sp(n,1)\) be  the group of all linear transformations of the right $\mathbb{H}$-vector space  $\mathbb{H}^{n+1}$ which preserve the  quadratic  form $\displaystyle\sum_{j=1}^n \mid u_j\mid^2-\mid u_{n+1}\mid^2$. 
Let $K=Sp(n)\times Sp(1)$ be the subgroup of $G$ consisting of pairs $(a,d)$ of unitaries. Then  $K$ is  a maximal compact subgroup of $G$. The quaternionic hyperbolic space is the rank one symmetric space \(G/K\) of the noncompact type. It can be realized as the unit ball $\mathbb{B}(\mathbb{H}^n)=\{x\in \mathbb{H}^n; \mid x\mid<1\}$. The group $G$ acts on $\mathbb{B}(\mathbb{H}^n)$ by the fractional linear mappings $x\mapsto g.x=(ax+b)(cx+d)^{-1}$, if $g=\begin{pmatrix}
a&b\\c&d
\end{pmatrix}$, with $a\in \mathbb{H}^{n\times n}, b\in \mathbb{H}^{n\times 1}, c\in \mathbb{H}^{1\times n}$ and $d\in \mathbb{H}$.\\ 
Denote by $\mathfrak{g}$ the Lie algebra of $G$; $\mathfrak{g}=\mathfrak{k}\oplus \mathfrak{p}$ the Cartan decomposition of $\mathfrak{g}$, where $\mathfrak{p}$ is a vector space of matrices of the form
$
\displaystyle\left\lbrace\begin{pmatrix}
0&x\\
x^\ast&0\\
\end{pmatrix}, x\in \mathbb{H}^{n}\right\rbrace
$,
and 
$\displaystyle
\mathfrak{k}=\left\lbrace\begin{pmatrix}
X&0\\
0&q\\
\end{pmatrix}, X^\ast+X=0, q+\overline{q}=0\right\rbrace$,
where $X^\ast$ is the conjugate transpose of the matrix $X$ and $q\in \mathbb{H}$.\\
Let   $H = \begin{pmatrix}
0_n&e_1\\
^t e_1&0\\
\end{pmatrix}\in \mathfrak{p}$ with $^t e_1=(1,0,\cdots,0)$.
Then  $\textbf{a}=\R\,H$ is a Cartan subspace in $\mathfrak{p}$, and the corresponding analytic subgroup  $A=\{ a_t=\exp t\,H; t\in \mathbb{R}\}$, where 
$
\displaystyle a_t= \begin{pmatrix}
cht&0&sht\\
0&0_{n-1}&0\\
sht&0&cht\\
\end{pmatrix}.
$
With $A$ determined we then have that\\
$$
M=\left\lbrace g=\begin{pmatrix}
q&0&0\\
0&m&0\\
0&0&q
\end{pmatrix}, m\in Sp(n-1), \mid q\mid=1\right\rbrace
\simeq Sp(n-1)\times Sp(1).
$$
Let $\alpha\in\mathfrak{a}^\ast$ be defined by $\alpha(H)=1$. Then a system $\Sigma$ of restricted roots  of the pair $(\mathfrak{g},\mathfrak{a})$ is $\Sigma=\{\pm \alpha, \pm 2\alpha\}$ if $n\geq 2$ and $\Sigma=\{ \pm 2\alpha\}$ if $n=1$, with  Weyl group   $W\simeq \{\pm Id\}$. A positive subsystem  of roots corresponding to the positive Weyl chamber $\mathfrak{a}^+\simeq (0,\infty)$ in $\mathfrak{a}$ is $\Sigma^+=\{\alpha, 2\alpha\}$ if $n\geq 2$ and $\Sigma^+=\{ 2\alpha\}$ if $n=1$.\\
 Let $\mathfrak{n}=\mathfrak{g}_\alpha+ \mathfrak{g}_{2\alpha}$ be the direct sum of the positive root subspaces, with $\dim \mathfrak{g}_\alpha=4(n-1)$ and $\dim\mathfrak{g}_{2\alpha}=3$ and $N$ the corresponding analytic subgroup of $G$. Then 
the half sum of the positive restricted roots with multiplicities counted $\rho$ equals to $(2n+1)\alpha$,  and  shall be viewed as a real number $\rho=2n+1$ by the identification $\displaystyle\mathfrak{a}_c^\ast\simeq \mathbb{C}$ via $\displaystyle\lambda\alpha\leftrightarrow\lambda$.\\
Let $\displaystyle \overline{A^+}= \{a_t\in A; \quad t\geq 0\}$. Then we have the Cartan decomposition $G=K\overline{A^+}K$, that is any $g\in G$ can be written  $g=k_1(g)\,{\rm e}^{A^+(g)}\,k_2(g), \quad k_1(g), k_2(g)\in K$ and $A^+(g)\in \overline{\mathfrak{a}^+}$.\\ 
If we write $g\in G$ in $(n+1)\times (n+ 1)$ block notation as
$\displaystyle g=\begin{pmatrix}
a&b\\c&d
\end{pmatrix}$. Then a straightforward computation gives
\begin{align}\label{component}
\cosh A^+(g)= \mid d\mid \quad \textit{and}\quad  H(g)=\log \mid ce_1+d\mid.
\end{align}
We  normalize the invariant measure ${\rm d}g_K$ on  \(G/K\) so that the following integral  formula holds: for all $h\in L^1(G/K)$, 
\begin{align}\label{integ} 
\int_{G/K}h(gK){\rm d}g_K=\int_G h(g.0){\rm d}g= \int_{K}\int_{0}^\infty h(k\,a_t)\Delta(t)\,{\rm d}k\,{\rm d}t, 
\end{align}
where  ${\rm d}t$ is the Lebesgue measure,
$\Delta(t)=(2\sinh t)^{4n-1}(2\cosh t)^3$, and \({\rm d}k\) is the Haar measure of  \(K\) with \(\displaystyle\int_K {\rm d}k=1\).
\subsection{The vector Poisson transform}
In this subsection we define the Poisson transform associated to the vector bundles $G\times_K V_\nu$ over $Sp(n,1)/Sp(n)\times Sp(1)$ and derive some results referring to \cite{Ol}, \cite{V},  and \cite{Y}   for more informations on the subject.\\
Let $\sigma_\nu$ denote the restriction of $\tau_\nu$ to $M$. For $\lambda\in \mathbb{C}$ we consider the representation $\sigma_{\nu,\lambda}$ of $P=MAN$ on $V_\nu$ defined by  $\sigma_{\nu,\lambda}(man)=a^{\rho-i\lambda}\sigma_\nu(m)$. Then $\sigma_{\nu,\lambda}$ defines a principal series representations of $G$ on the Hilbert space 
\begin{align*}
H^{\nu,\lambda}:=\{f:G\rightarrow V_\nu\mid f(gman)=\sigma_{\nu,\lambda}^{-1}(man)f(g)\, \forall man \in MAN, f_{\mid K}\in L^2\},
\end{align*}
where  $G$ acts  by the left regular representation.
We shall denote by $C^{-\omega}(G,\sigma_{\nu,\lambda})$ the  space of its hyperfunctions vectors. By the Iwasawa decomposition, the restriction map from $G$ to $K$ gives an isomorphism from   $H^{\nu,\lambda}$ onto the space $L^2(K,\sigma_\nu)$. This yields, the so-called compact picture of $H^{\nu,\lambda}$, with the group action given by 
\begin{align*}
\pi_{\sigma_\nu,\lambda}(g)f(k)={\rm e}^{(i\lambda-\rho)H(g^{-1}k)}f(\kappa(g^{-1}k)).
\end{align*}
By $C^{-\omega}(K,\sigma_\nu)$ we denote the space of its hyperfunctions vectors.\\
A Poisson transform is the continuous, linear, $G$-equivariant map $\mathcal{P}_\lambda^\nu$ from  $C^{-\omega}(G, \sigma_{\nu,\lambda})$ to $C^\infty(G,\tau_\nu)$ defined by
\begin{align*}
\mathcal{P}^\nu_\lambda\,f(g)=\int_K \tau_\nu(k)f(gk)\,{\rm d}k.
\end{align*}
In the compact picture the Poisson transform is given by 
\begin{align*}
\mathcal{P}^\nu_\lambda\,f(g)=\int_K {\rm e}^{-(i\lambda+\rho)H(g^{-1}k)}\tau_\nu(\kappa(g^{-1}k))\,f(k)\,{\rm d}k.
\end{align*}
Let $\mathbb{D}(G,\tau_\nu)$ denote the algebra of left invariant differential operators on $C^\infty(G,\tau_\nu)$. Let $\mathcal{E}_{\nu,\lambda}(G)$ be the space of all $F\in C^\infty(G,\tau_\nu)$ such that 
$\displaystyle \Omega\, F=-(\lambda^2+\rho^2-\nu(\nu+2))\,F$. 
\begin{pro}
(i) $\mathbb{D}(G,\tau_\nu)$  is the algebra generated by the Casimir operator $\Omega$ of $\mathfrak{g}$.\\
(ii) For $\lambda\in \mathbb{C}, \nu\in \mathbb{N}$, the Poisson transform $\mathcal{P}^\nu_\lambda$ maps $C^{-\omega}(G, \sigma_{\nu,\lambda})$ to $\mathcal{E}_{\nu,\lambda}(G)$.
\end{pro}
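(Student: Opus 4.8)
The plan is to establish the two assertions in turn.

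\emph{Part} (i). I would use the Harish-Chandra homomorphism for the homogeneous vector bundle $G\times_K V_\nu$: following \cite{Ol} (see also \cite{V}, \cite{Y}), there is an injective algebra homomorphism $\gamma$ from $\mathbb{D}(G,\tau_\nu)$ into $\bigl(\mathbb{C}[\lambda]\otimes\mathrm{End}_M(V_\nu)\bigr)^{W}$ — the $\tau_\nu$-twisted analogue of Harish-Chandra's classical map, obtained by taking radial parts of invariant operators along $A$ — where, under the identification $\mathfrak{a}_c^\ast\simeq\mathbb{C}$ used above, $\mathbb{C}[\lambda]$ denotes the polynomials in the spectral variable and $W\simeq\{\pm 1\}$ acts by $\lambda\mapsto-\lambda$. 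The point specific to our bundle is that $\sigma_\nu=\tau_\nu|_M$ is irreducible: since $\tau_\nu$ is trivial on $Sp(n)$ it is trivial on the $Sp(n-1)$-factor of $M\simeq Sp(n-1)\times Sp(1)$, so $\sigma_\nu$ is nothing but the $(\nu+1)$-dimensional irreducible representation of $Sp(1)$. By Schur's lemma $\mathrm{End}_M(V_\nu)=\mathbb{C}\,\mathrm{Id}$, hence $\gamma$ takes values in $\mathbb{C}[\lambda]^{W}=\mathbb{C}[\lambda^2]$, a polynomial algebra in a single generator.

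Now $\Omega$, being central in $U(\mathfrak{g})$, defines an element of $\mathbb{D}(G,\tau_\nu)$, and $\gamma(\Omega)$ is the polynomial $\lambda\mapsto-(\lambda^2+\rho^2-\nu(\nu+2))$ — this is exactly the scalar computed in Part (ii) below — which has nonzero quadratic term and therefore generates $\mathbb{C}[\lambda^2]$. Consequently $\mathbb{C}[\gamma(\Omega)]\subseteq\gamma(\mathbb{D}(G,\tau_\nu))\subseteq\mathbb{C}[\lambda^2]=\mathbb{C}[\gamma(\Omega)]$, so the three algebras coincide, and since $\gamma$ is injective this forces $\mathbb{D}(G,\tau_\nu)=\mathbb{C}[\Omega]$.

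\emph{Part} (ii). First, $\mathcal{P}^\nu_\lambda$ intertwines the two $G$-actions: the change of variable $k\mapsto k_0^{-1}k$ in $\mathcal{P}^\nu_\lambda f(g)=\int_K\tau_\nu(k)f(gk)\,{\rm d}k$ gives $\mathcal{P}^\nu_\lambda f(gk_0)=\tau_\nu(k_0)^{-1}\mathcal{P}^\nu_\lambda f(g)$, so the image consists of sections; the change $g\mapsto h^{-1}g$ gives $\mathcal{P}^\nu_\lambda\circ\pi_{\sigma_\nu,\lambda}(h)=L_h\circ\mathcal{P}^\nu_\lambda$ for every $h\in G$, where $L_h$ is left translation on $C^\infty(G,\tau_\nu)$. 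By continuity this passes to the enveloping algebra, so $\mathcal{P}^\nu_\lambda$ is $U(\mathfrak{g})$-equivariant. Since $\pi_{\sigma_\nu,\lambda}$ is a principal series representation induced from $P=MAN$, it has an infinitesimal character, hence $\Omega\in\mathcal{Z}(\mathfrak{g})$ acts on $C^{-\omega}(G,\sigma_{\nu,\lambda})$ by a scalar; computing this scalar — from the infinitesimal character (Harish-Chandra parameter $\mu_{\sigma_\nu}+i\lambda$), or directly from the Iwasawa expression for the Casimir — yields $-(\lambda^2+\rho^2-\nu(\nu+2))$, the term $\nu(\nu+2)$ being precisely the Casimir eigenvalue of $\tau_\nu$ viewed as an irreducible $Sp(1)$-module, normalized via the restriction of the Killing form of $\mathfrak{g}$. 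Since $\mathcal{P}^\nu_\lambda$ maps into $C^\infty(G,\tau_\nu)$ (as recalled above), equivariance yields $\Omega\,\mathcal{P}^\nu_\lambda f=-(\lambda^2+\rho^2-\nu(\nu+2))\,\mathcal{P}^\nu_\lambda f$, that is, $\mathcal{P}^\nu_\lambda f\in\mathcal{E}_{\nu,\lambda}(G)$.

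The change-of-variable computations, the smoothness of $\mathcal{P}^\nu_\lambda f$, and the passage from group to Lie-algebra equivariance are routine. The real work is in the vector-bundle Harish-Chandra formalism underlying Part (i) — notably the injectivity of $\gamma$ and that its image lies in the $W$-invariants, which relies on $w\sigma_\nu\cong\sigma_\nu$ for the nontrivial $w\in W$ (realized by the standard intertwining operator between $\pi_{\sigma_\nu,\lambda}$ and $\pi_{\sigma_\nu,-\lambda}$) — together with the explicit evaluation of the infinitesimal-character scalar in Part (ii); for the former I would rely on \cite{Ol}.
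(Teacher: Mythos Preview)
Your proof is correct and follows essentially the same route as the paper: for (i) both arguments use the irreducibility of $\sigma_\nu=\tau_\nu|_M$ together with $\dim\mathfrak{a}=1$ to identify (the image of) $\mathbb{D}(G,\tau_\nu)$ with $U(\mathfrak{a})^W\simeq\mathbb{C}[\lambda^2]$, and for (ii) both invoke the $G$-equivariance of $\mathcal{P}^\nu_\lambda$ and the infinitesimal character of the principal series, the paper citing \cite{Kn}, Proposition~8.22 and Lemma~12.28, for the scalar $-(\lambda^2+\rho^2-\nu(\nu+2))$. Your write-up is simply more explicit (Schur's lemma, the change-of-variables equivariance, the remark that only injectivity of $\gamma$ is needed), but the strategy is the same.
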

\begin{proof}
(i) Let $U(\mathfrak{a})$ be the universal enveloping  algebra of the complexification of $\mathfrak{a}$.  Since the restriction of $\tau_\nu$ to $M$ is irreducible, then $\mathbb{D}(G,\tau_\nu)\simeq U(\mathfrak{a})^W$. As $\mathfrak{a}$ is one dimensional, then $\mathbb{D}(G,\tau_\nu)\simeq \mathbb{C}[s^2]$, symmetric functions of one variable . Thus $\mathbb{D}(G,\tau_\nu)$ is generated by the Casimir element $\Omega$ of the Lie algebra $\mathfrak{g}$ of $G$, viewed as a differential operator acting on $C^\infty(G,\tau_\nu)$.\\
(ii) Since $\sigma_\nu$ is irreducible, the image of $\mathcal{P}^\nu_\lambda$ consists of joint eigenfunctions with respect to the action of $\Omega$. Moreover  $\Omega$ acts by the infinitesimal character of the the principal series representations $\pi_{\sigma_\nu,\lambda}$. It follows from  Proposition 8.22 and Lemma 12.28 in \cite{Kn}, that 
\begin{align}\label{eigenvalue}
\pi_{\sigma_\nu,\lambda}(\Omega)=-(\lambda^2+\rho^2-c(\sigma_\nu))Id \quad \textit{on}\quad C^{-\omega}(G, \sigma_{\nu,\lambda}),
\end{align}
where $c(\sigma_\nu)$ is the Casimir value of $\sigma_\nu$ given by $c(\sigma_\nu)=\nu(\nu+2)$.\\
\end{proof}
Let $\Phi_{\nu,\lambda}$ be the $\tau_\nu$-spherical function associated to $\sigma_\nu$. Then $\Phi_{\nu,\lambda}$   admits  the following Eisenstein integral representation (see [\cite{com}, Lemma 3.2]):
\begin{align*}
\Phi_{\nu,\lambda}(g)=\int_K {\rm e}^{-(i\lambda+\rho)H(g^{-1}k)}\tau_\nu(\kappa(g^{-1}k)k^{-1})\, {\rm d}k.
\end{align*}
Note that $\Phi_{\nu,\lambda}$ lies in  
$
C^\infty(G,\tau_\nu,\tau_\nu)$ the space of smooth functions $F:G\rightarrow End (V_{\tau_\nu})$ satisfying 
\begin{align*}
F(k_1 g k_2)=\tau_\nu(k_2^{-1})F(g)\tau_\nu(k_1^{-1}),
\end{align*}
the so called  $\tau_\nu$-radial functions. Being $\tau_\nu$-radial, $\Phi_{\nu,\lambda}$ is completely determined by its restriction to $A$, by the Cartan decomposition $G=KAK$. Moreover, since $\sigma_\nu$ is irreducible, it follows that $\Phi_{\nu,\lambda}(a_t)\in End_M(V_\nu)\simeq \mathbb{C}Id_{V_\nu}$, $\forall a_t\in A$. Therefore  there exists $\varphi_\nu:\mathbb{R}\rightarrow \mathbb{C}$ such that $\Phi_{\nu,\lambda}(a_t)=\varphi_\nu(t).Id_{V_\nu}$. We have
\begin{align}
\varphi_{\nu,\lambda}(t)=\frac{1}{\nu+1}\int_K {\rm e}^{-(i\lambda+\rho)H(g^{-1}k)}\chi_\nu(\kappa(g^{-1}k)k^{-1})\, {\rm d}k,
\end{align}
where $\chi_\nu$ is the character of $\tau_\nu$.\\
This so-called trace $\tau_\nu$-spherical function  has  been computed  explicitly in \cite{VP} using the radial part of the Casimir operator $\Omega$ (see also \cite{T} ).
We have $\varphi_{\nu,\lambda}(t)=(\cosh t)^\nu \phi_\lambda^{(\rho-2,\nu+1)}(t)$,
where $\phi_\lambda^{(\rho-2,\nu+1)}(t)$ is the Jacobi function (cf. \cite{Ko})
\begin{align*}
\phi_\lambda^{(\rho-2,\nu+1)}(t)=\,_2 F_1(\frac{i\lambda+\rho+\nu}{2},\frac{-i\lambda+\rho+\nu}{2}; \rho-1; -\sinh^2 t).
\end{align*}
We deduce from \eqref{A4} the asymptotic behaviour of $\varphi_{\nu,\lambda}$ 
\begin{align}\label{asymp}
\varphi_{\lambda,\nu}(a_t)={\rm e}^{(i\lambda-\rho)t}[\mathbf{c}_\nu(\lambda)+\circ(1)], \,\, \textit{as}\,\, \,t\rightarrow \infty\quad  if \quad \Im(\lambda)<0.
\end{align}
where
\begin{align}\label{c}
\mathbf{c}_\nu(\lambda)=\frac{2^{\rho-i\lambda}\Gamma(\rho-1)\Gamma(i\lambda)}{\Gamma(\frac{i\lambda+\rho+\nu}{2})\Gamma(\frac{i\lambda+\rho-\nu-2}{2})}.
\end{align}
For $\lambda\in \mathbb{C}$ the $\mathbf{c}$-function of Harish-Chandra associated to $\tau_\nu$  is defined by
\begin{align*}
\mathbf{c}(\tau_\nu,\lambda)=\int_{\overline{N}}{\rm e}^{-(i\lambda+\rho)H(\overline{n})}\tau_\nu(\kappa(\overline{n}))\,{\rm d}\overline{n}.
\end{align*}
The integral converges for $\lambda$ such that $\Re(i\lambda)>0$ and it has a meromorphic continuation to $\mathbb{C}$.\\ In above ${\rm d}\overline{n}$ is the Haar measure of $\overline{N}=\theta(N)$, $\theta$ being the Cartan involution.\\
We may use formula \eqref{c} to give explicitly $\mathbf{c}(\tau_\nu,\lambda)$. Indeed, one easily check that $\mathbf{c}(\tau_\nu,\lambda)\in End_M(V_\nu)= \mathbb{C}Id_{V_\nu}$.
Then using the following result on the behaviour of $\Phi_{\nu,\lambda}(a_t)$ (\cite{Y}, Proposition 2.4)
\begin{align*}
\Phi_{\nu,\lambda}(a_t)={\rm e}^{(i\lambda-\rho)t}(\mathbf{c}(\tau_\nu,\lambda)+\circ(1)) \textit{as}\quad t\rightarrow \infty,
\end{align*}
together with  $\Phi_{\nu,\lambda}(a_t)=\varphi_{\nu,\lambda}(t). Id$, we  find then from  \eqref{asymp} that  $\mathbf{c}(\tau_\nu,\lambda)=\mathbf{c}_\nu(\lambda)Id_{V_\nu}$.\\
We end this section by recalling a result of Olbrich \cite{Ol} on the range of the Poisson transform on vector bundles which reads in our case as follows
\begin{theo}\label{O}\cite{Ol}
Let $\nu\in \mathbb{N}$ and  $\lambda\in \mathbb{C}$ such that 
\begin{enumerate}
\item[(i)] $-2i\lambda\notin \mathbb{N}$
\item[(ii)] $i\lambda+\rho\notin -2\mathbb{N}-\nu\cup -2\mathbb{N}+\nu+2$.
\end{enumerate}
Then the Poisson transform $\mathcal{P}^\nu_\lambda$ is a $K$-isomorphism from $C^{-\omega}(K,\sigma_\nu)$ onto $\mathcal{E}_{\nu,\lambda}(G)$.
\end{theo}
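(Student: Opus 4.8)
Here is the line of argument, following Olbrich \cite{Ol}. Since $\sigma_\nu=\tau_\nu|_M$ is irreducible we have $\mathbb{D}(G,\tau_\nu)=\mathbb{C}[\Omega]$, so $\mathcal{E}_{\nu,\lambda}(G)$ is cut out by the single equation $\Omega F=-(\lambda^2+\rho^2-\nu(\nu+2))F$, and passing to the $\tau_\nu$-radial part along $\overline{A^+}$ turns this into a second order ODE in the Cartan variable $t$ with a regular singular point at $t=+\infty$ whose characteristic exponents are $-\rho\pm i\lambda$. Hypotheses (i) and (ii) are tailored exactly so that the Harish-Chandra type series solutions entering the expansion at infinity are well defined, converge, and carry no logarithmic terms; in particular (i), $-2i\lambda\notin\mathbb{N}$, controls the recursions defining these series and forbids in addition a pole of the numerator factor $\Gamma(i\lambda)$ of \eqref{c}. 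Consequently every $F\in\mathcal{E}_{\nu,\lambda}(G)$ admits, on $K\overline{A^+}$, a convergent asymptotic expansion of the form $F(ka_t)\sim\sum_{j\ge0}a_j^+(k)\,e^{(i\lambda-\rho-2j)t}+\sum_{j\ge0}a_j^-(k)\,e^{(-i\lambda-\rho-2j)t}$, in which the coefficients $a_j^\pm$ are determined recursively by $a_0^\pm$. I would then invoke the boundary value theory for eigensections (the vector bundle version of the asymptotic expansion theory of Casselman--Milicic, Oshima and others, worked out for bundles in \cite{Ol}) to show that the leading coefficient map $\beta_\lambda^\nu\colon F\mapsto a_0^+$ is well defined with values in $C^{-\omega}(K,\sigma_\nu)$, continuous, and $G$-equivariant.

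The second step is the identity $\beta_\lambda^\nu\circ\mathcal{P}_\lambda^\nu=\mathbf{c}_\nu(\lambda)\,\mathrm{Id}$ on $C^{-\omega}(K,\sigma_\nu)$, with $\beta_\lambda^\nu$ normalized as above. For $K$-finite $f$ one would compute the leading asymptotics of $\mathcal{P}_\lambda^\nu f$ from the Eisenstein integral formula for $\Phi_{\nu,\lambda}$ and the limit \eqref{asymp}; the general case then follows from the continuity of $\beta_\lambda^\nu$ and the density of the $K$-finite vectors. Now hypothesis (ii), $i\lambda+\rho\notin-2\mathbb{N}-\nu\cup-2\mathbb{N}+\nu+2$, says precisely that neither Gamma factor in the denominator of \eqref{c} has a pole, so that, together with the absence of a pole of $\Gamma(i\lambda)$ guaranteed by (i) and $\Gamma(\rho-1)$ being a harmless constant ($\rho=2n+1\ge3$), the number $\mathbf{c}_\nu(\lambda)$ is finite and nonzero. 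Hence the identity shows at once that $\mathcal{P}_\lambda^\nu$ is injective and that $\mathbf{c}_\nu(\lambda)^{-1}\beta_\lambda^\nu$ is a continuous left inverse.

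For surjectivity, given $F\in\mathcal{E}_{\nu,\lambda}(G)$ I would set $F_0=F-\mathbf{c}_\nu(\lambda)^{-1}\mathcal{P}_\lambda^\nu(\beta_\lambda^\nu F)$; then $F_0\in\mathcal{E}_{\nu,\lambda}(G)$ and $\beta_\lambda^\nu F_0=0$, so the entire $e^{(i\lambda-\rho)t}$-branch of the expansion of $F_0$ vanishes. It remains to prove the vanishing theorem: such an $F_0$ is identically $0$. In the present rank one setting this is cleanest $K$-type by $K$-type. For $\delta\in\widehat K$ the $\delta$-isotypic subspace of $\mathcal{E}_{\nu,\lambda}(G)$ is finite dimensional and, for $\lambda$ subject to (i)--(ii), of the same dimension as the $\delta$-isotypic subspace of $C^{-\omega}(K,\sigma_\nu)$; the radial components of its elements solve shifted Jacobi equations whose two dimensional solution spaces are spanned by explicit ${}_2F_1$-solutions of leading exponents $-\rho\pm i\lambda$, and the restriction of $\mathcal{P}_\lambda^\nu$ to the $\delta$-part is multiplication by an explicit quotient of Gamma functions which hypotheses (i)--(ii) force to be nonzero for every $\delta$ occurring in $C^{-\omega}(K,\sigma_\nu)$. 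Thus $\mathcal{P}_\lambda^\nu$ is bijective on every $K$-isotypic component, hence so is $\beta_\lambda^\nu=\mathbf{c}_\nu(\lambda)(\mathcal{P}_\lambda^\nu)^{-1}$ there; since $\beta_\lambda^\nu$ is $K$-equivariant it commutes with projection onto $K$-types, so $\beta_\lambda^\nu F_0=0$ forces every $K$-component of $F_0$ to vanish, whence $F_0=0$. Therefore $F=\mathcal{P}_\lambda^\nu(\mathbf{c}_\nu(\lambda)^{-1}\beta_\lambda^\nu F)$, and being a continuous $G$-equivariant bijection with continuous inverse $\mathbf{c}_\nu(\lambda)^{-1}\beta_\lambda^\nu$, the map $\mathcal{P}_\lambda^\nu$ is the asserted $K$-isomorphism.

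The main obstacle is the boundary value map $\beta_\lambda^\nu$ at the hyperfunction level: establishing that it is well defined and continuous, together with the equivalent vanishing theorem, is the vector bundle counterpart of the Helgason conjecture (proved in the scalar case by Kashiwara--Kowata--Minemura--Okamoto--Oshima--Tanaka, cf. \cite{Ka}), and is where essentially all the difficulty lies; what makes it tractable in our situation is the rank one structure --- explicit Jacobi functions and a reduction to one $K$-type at a time --- which is the route taken in \cite{Ol}.
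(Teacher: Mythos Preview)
The paper does not prove this statement at all: Theorem~\ref{O} is quoted verbatim as a result of Olbrich \cite{Ol} and is used as a black box (notably in the surjectivity step of Theorem~\ref{range}). So there is no ``paper's own proof'' to compare against.

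Your sketch is a faithful outline of the Olbrich argument and the role of the hypotheses is identified correctly: condition (i) is the generic spectral parameter condition ensuring that the two Harish--Chandra exponents $-\rho\pm i\lambda$ do not differ by a nonnegative integer (so the series solutions at infinity exist without logarithms and the boundary value map is defined), while condition (ii) rules out poles of the denominator Gamma factors in \eqref{c}, i.e.\ zeros of $\mathbf{c}_\nu(\lambda)$, so that $\beta_\lambda^\nu\circ\mathcal{P}_\lambda^\nu=\mathbf{c}_\nu(\lambda)\,\mathrm{Id}$ really gives a two-sided inverse. One small point worth tightening: in your surjectivity paragraph the $K$-type argument already gives bijectivity of $\mathcal{P}_\lambda^\nu$ on $K$-finite vectors directly, so appealing to $\beta_\lambda^\nu$ there is circular as written; cleaner is to argue that injectivity plus equality of (finite) dimensions on each $\delta$-isotype gives bijectivity on $K$-finite vectors, and then invoke continuity of $\mathcal{P}_\lambda^\nu$ and of the boundary value map on hyperfunctions (this last step being, as you rightly flag, the substantial input from \cite{Ol}) to pass to all of $C^{-\omega}(K,\sigma_\nu)$.
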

\section{The vector-valued Helgason-Fourier transfrorm}
In this section we give the inversion  and the Plancherel formulas for the Helgason-Fourier transform on the vector bundle $G\times_K V_\nu$.\\ 
According to \cite{com} the  vector-valued Helgason-Fourier transform of \(f\in C^{\infty}_c(G,\tau_\nu)\)  is the $V_\nu$-valued function on $\mathbb{C}\times K$   defined  by:
\[\mathcal{F}_\nu f(\lambda,k)=\int_G e_{\lambda,\nu}(k^{-1}g)\,f(g){\rm d}g ,\]
where $e_{\lambda,\nu}$ is the vector valued function $e_{\lambda,\nu}: G\rightarrow End(V_\nu)$ given by 
\begin{align*}
e_{\lambda,\nu}(g)= {\rm e}^{(i\lambda-\rho)H(g^{-1})}\tau_{\nu}^{-1}(\kappa(g^{-1})).
\end{align*}
Notice that our sign on "$\lambda$" is the opposite of the one  in \cite{com}.\\
In order to state the next theorem, we  introduce the finite set in $\{\lambda, \Im(\lambda)\geq 0\}$
\begin{align*}
D_\nu=\{\lambda_j=i(\nu-\rho+2-2j), j=0,1,\cdots,\nu-\rho+2-2j>0\}.
\end{align*}
Note that  $D_\nu$ is empty if $\nu\leq \rho-2$. It parametrizes the discrete series representation of $G$ containing $\tau_\nu$, see \cite{VP}.\\ Let
\begin{align*}
d_\nu(\lambda_j)=\frac{2^{-2(\rho-\nu-1)}(\nu-\rho-2j+2)(\rho-2+j)!(\nu-j)!}{\Gamma^2(\rho-1)j!(\nu-\rho-j+2)!}, \quad \lambda_j\in D_\nu
\end{align*} 
For $\lambda_j\in D_\nu$, we define  the operators $\mathcal{Q}^\nu_j$ 
\begin{align*}
\begin{array}{ll}
L^2(G,\tau_\nu) \rightarrow  \mathcal{E}_{\nu,\lambda_j}(G,\tau_\nu)\\
F \mapsto d_\nu(\lambda_j)\,\Phi_{\nu,\lambda_j}\ast F
\end{array}
\end{align*}
We denote the image by $A^2_j$. We set 
$$
L^2_{\textit{disc}}(G,\tau_\nu)=\bigoplus_{j;\, \nu-\rho+2-2j>0} A^2_j,
$$
and denote by  $L^2_{\textit{cont}}(G,\tau_\nu)$  its orthocomplement.
Let  $L^2_{\sigma_\nu}(\mathbb{R}^+\times K, \mid \mathbf{c}_\nu(\lambda)\mid^{-2}{\rm d}\lambda\, {\rm d}k)$ be the space of vector functions $\phi:\mathbb{R}^+\times K\rightarrow V_\nu$ satisfying 
\begin{enumerate}
\item[(i)] For each fixed $\lambda, \phi(\lambda,km)=\sigma_\nu(m)^{-1}\phi(\lambda,k), \forall m\in M$
\item[(ii)] $\int_{\mathbb{R}^+\times K}\parallel \phi(\lambda,k)\parallel_\nu^2\, \mid \mathbf{c}_\nu(\lambda)\mid^{-2}{\rm d}\lambda\,{\rm d}k<\infty$.
\end{enumerate}
\begin{theo}\label{fourier}
(i) For  $F\in C_c^\infty(G,\tau_\nu)$ we have the following inversion and Plancherel formulas
\begin{equation}\label{inversion1}
 F(g)=\frac{1}{2\pi}\int_0^{\infty}\int_K e_{\lambda,\nu}^\ast(k^{-1}g)\mathcal{F}_\nu F(\lambda,k)\,\mid \mathbf{c}_\nu(\lambda)\mid^{-2}{\rm d}\lambda\,{\rm d}k
+\sum_{\lambda_j\in D_\nu} d_\nu(\lambda_j)\int_K e_{\lambda_j,\nu}^\ast(k^{-1}g)\mathcal{F}_\nu F(\lambda_j,k)\,{\rm d}k,
\end{equation}
\begin{equation}\label{plancherel1}
\int_G \parallel F(g)\parallel_\nu^2 {\rm d}g_K=\frac{1}{2\pi}\int_0^{\infty}\int_K \parallel \mathcal{F}_\nu F(\lambda,k)\parallel_\nu^2\mid \mathbf{c}_\nu(\lambda)\mid^{-2}{\rm d}\lambda\,{\rm d}k 
+\sum_{\lambda_j\in D_\nu} d_\nu(\lambda_j)\int_K <\mathcal{F}_\nu F(\lambda_j,k), \mathcal{F}_\nu F(-\lambda_j,k)>_\nu \,{\rm d}k
\end{equation}
(ii) The Fourier transform $\mathcal{F}_\nu$ extends to an isometry from $L^2_{\textit{cont}}(G,\tau_\nu)$ onto the space $L_{\sigma_\nu}^2(\mathbb{R}^+\times K,\mid \mathbf{c}_\nu(\lambda)\mid^{-2}{\rm d}\lambda\, {\rm d}k)$.
\end{theo}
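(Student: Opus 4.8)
The plan is to derive both the inversion and Plancherel formulas for $\mathcal{F}_\nu$ from the scalar Jacobi-transform inversion together with the structure of $C_c^\infty(G,\tau_\nu)$ as a $\tau_\nu$-radial convolution module. First I would reduce to $\tau_\nu$-radial functions: given $F\in C_c^\infty(G,\tau_\nu)$, form the radialization $F^\natural(g)=\int_K\tau_\nu(k)F(kg)\,{\rm d}k$, which lies in $C_c^\infty(G,\tau_\nu,\tau_\nu)$ and, by irreducibility of $\sigma_\nu$, is scalar on $A$. Then $F^\natural(a_t)=f(t)\,\mathrm{Id}_{V_\nu}$ for a compactly supported even smooth $f$, and the Casimir reduces on such functions to the Jacobi operator with parameters $(\alpha,\beta)=(\rho-2,\nu+1)$ after the substitution $F^\natural(a_t)=(\cosh t)^\nu\,(\text{even function})$, exactly the reduction behind $\varphi_{\nu,\lambda}(t)=(\cosh t)^\nu\phi_\lambda^{(\rho-2,\nu+1)}(t)$. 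The Jacobi transform inversion for parameters $(\rho-2,\nu+1)$ — which has a continuous part with density $|\mathbf{c}_\nu(\lambda)|^{-2}$ and, when $\nu+1>\rho-1$, i.e. $\nu>\rho-2$, a finite discrete part whose nodes are precisely the poles $\lambda_j\in D_\nu$ of $(\mathbf{c}_\nu(\mu)\mathbf{c}_\nu(-\mu))^{-1}$ in the upper half-plane with residue weights $d_\nu(\lambda_j)$ — then gives \eqref{inversion1} and \eqref{plancherel1} on the radial part.

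Next I would propagate this to all of $C_c^\infty(G,\tau_\nu)$ by $G$-equivariance and a $K$-type decomposition. Concretely, one uses that $\mathcal{P}^\nu_\lambda$ intertwines the principal series action with left translation, that $\mathcal{F}_\nu$ is its adjoint, and that every $F\in C_c^\infty(G,\tau_\nu)$ can be written via the reproducing property $F=\Phi_{\nu,\lambda}\ast$-type identities against the radial kernels; equivalently, expand $F$ in $K$-finite vectors and apply the radial result componentwise. The Eisenstein-integral representation of $\Phi_{\nu,\lambda}$ recorded in the excerpt lets one rewrite $e_{\lambda,\nu}^\ast(k^{-1}g)$-integrals as convolutions $\Phi_{\nu,\lambda}\ast F$, so the discrete terms in \eqref{inversion1} are exactly $d_\nu(\lambda_j)\,\Phi_{\nu,\lambda_j}\ast F$, matching the operators $\mathcal{Q}^\nu_j$ and the spaces $A^2_j$ defined just above the theorem. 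Polarizing the inversion formula and pairing against $F$ yields \eqref{plancherel1}, the cross term $\langle\mathcal{F}_\nu F(\lambda_j,\cdot),\mathcal{F}_\nu F(-\lambda_j,\cdot)\rangle_\nu$ arising because $\Phi_{\nu,\lambda_j}^\ast=\Phi_{\nu,-\bar\lambda_j}$ and $\lambda_j$ is purely imaginary.

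For part (ii), the Plancherel identity \eqref{plancherel1} shows $\mathcal{F}_\nu$ is an isometry from $C_c^\infty(G,\tau_\nu)$, with the continuous part of the norm computing the $L^2_{\sigma_\nu}(\mathbb{R}^+\times K,|\mathbf{c}_\nu(\lambda)|^{-2}{\rm d}\lambda\,{\rm d}k)$ norm and the discrete part identifying $L^2_{\mathrm{disc}}(G,\tau_\nu)=\bigoplus_j A^2_j$; restricting to the orthocomplement $L^2_{\mathrm{cont}}(G,\tau_\nu)$ kills the discrete sum and gives an isometry onto its image. Density of $C_c^\infty(G,\tau_\nu)$ extends it to all of $L^2_{\mathrm{cont}}(G,\tau_\nu)$, and surjectivity onto $L^2_{\sigma_\nu}(\mathbb{R}^+\times K,|\mathbf{c}_\nu(\lambda)|^{-2}{\rm d}\lambda\,{\rm d}k)$ follows because the adjoint (the continuous part of \eqref{inversion1}) maps that space back into $L^2_{\mathrm{cont}}(G,\tau_\nu)$ and inverts $\mathcal{F}_\nu$ on a dense set, so the image is both closed and dense. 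The main obstacle I anticipate is the bookkeeping at the discrete spectrum: verifying that the residues of $(\mathbf{c}_\nu(\mu)\mathbf{c}_\nu(-\mu))^{-1}$ produce exactly the constants $d_\nu(\lambda_j)$ with the stated $\Gamma$-factorials, and that the $K$-types occurring in these residual representations are exactly those reached by $\Phi_{\nu,\lambda_j}\ast F$, so that $A^2_j$ is correctly identified as the discrete series $\tau_\nu$-isotypic piece — this is where the explicit Jacobi-function computation from \cite{VP} is essential and must be matched carefully with the normalization of ${\rm d}g_K$ fixed in \eqref{integ}.
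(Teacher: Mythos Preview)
Your approach to part (i) is essentially the same as the paper's: reduce to the $\tau_\nu$-spherical transform, identify that with a Jacobi transform of parameters $(\rho-2,\nu+1)$, apply the Jacobi inversion and Plancherel formulas, and then use the Eisenstein-integral representation of $\Phi_{\nu,\lambda}$ to rewrite $\Phi_{\nu,\lambda}\ast F$ as a $K$-integral against $e_{\lambda,\nu}^\ast$. One small correction: your radialization $F^\natural(g)=\int_K\tau_\nu(k)F(kg)\,{\rm d}k$ is vector-valued, not $\mathrm{End}(V_\nu)$-valued, so it does not land in $C_c^\infty(G,\tau_\nu,\tau_\nu)$. The paper instead uses, for each fixed $g\in G$ and $v\in V_\nu$, the $\tau_\nu$-radial function $F_{g,v}(x)w=\int_K\langle\tau_\nu(k)w,v\rangle_\nu F(gkx)\,{\rm d}k$, checks $\mathcal{H}_\nu F_{g,v}(\lambda)=(\nu+1)^{-1}\langle(\Phi_{\nu,\lambda}\ast F)(g),v\rangle_\nu$ and $\mathrm{Tr}\,F_{g,v}(e)=\langle F(g),v\rangle_\nu$, and then applies the spherical inversion pointwise in $g$. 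This is the precise form your ``propagate by $K$-types and $G$-equivariance'' step should take.

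For part (ii), however, your surjectivity argument has a genuine gap. You write that ``the adjoint \ldots\ inverts $\mathcal{F}_\nu$ on a dense set, so the image is both closed and dense.'' Closedness is fine (any isometry has closed range), but density does not follow: if $\mathcal{F}_\nu$ is an isometry then $\mathcal{F}_\nu^\ast\mathcal{F}_\nu=I$ automatically, while $\mathcal{F}_\nu\mathcal{F}_\nu^\ast$ is only the orthogonal projection onto the range. In particular the kernel of $\mathcal{F}_\nu^\ast$ is exactly $(\mathrm{range}\,\mathcal{F}_\nu)^\perp$, so asserting that $\mathcal{F}_\nu^\ast$ is injective is equivalent to the surjectivity you are trying to prove. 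You need an independent argument that nothing in $L^2_{\sigma_\nu}(\mathbb{R}^+\times K,|\mathbf{c}_\nu(\lambda)|^{-2}\,{\rm d}\lambda\,{\rm d}k)$ is orthogonal to all $\mathcal{F}_\nu F$. The paper supplies this by first proving the convolution identity $\mathcal{F}_\nu(F\ast\Phi)(\lambda,k)=\mathcal{H}_\nu\Phi(\lambda)\,\mathcal{F}_\nu F(\lambda,k)$ for $\Phi\in C_c^\infty(G,\tau_\nu,\tau_\nu)$, and then observing (via Stone--Weierstrass) that the algebra $\{\mathcal{H}_\nu\Phi\}$ is dense in the even continuous functions on $\mathbb{R}$ vanishing at infinity; this forces any $f$ orthogonal to the range to satisfy $\int_K\langle f(\lambda,k),\mathcal{F}_\nu F(\lambda,k)\rangle\,{\rm d}k=0$ for a.e.\ $\lambda$ and every $F$, after which the scalar-case argument from \cite{H} finishes. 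Without this disintegration-in-$\lambda$ step your proof of (ii) is incomplete.
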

The first part of Theorem \ref{fourier} can  be easily deduced from the inversion and Plancherel formulas for the spherical transform.\\ 
Let $C^\infty_c(G,\tau_\nu,\tau_\nu)$ denote the space of smooth compactly supported $\tau_\nu$-radial functions. The spherical transform  of $F\in C^\infty_c(G,\tau_\nu,\tau_\nu)$ is the \({\C}\)-valued function \(\mathcal{H}_\nu F\) defined by:
$$
\mathcal{H}_\nu F(\lambda)=\frac{1}{\nu+1}\int_G Tr [\Phi_{\nu,\lambda}(g^{-1})F(g))]{\rm d}g, \quad \lambda\in \mathbb{C}.
$$ 
The inversion and the Plancherel formulas for the $\tau$-spherical transform have been given explicitly in \cite{VP}. For the convenience of the reader we give an elementary  proof by using the Jacobi transform.
\begin{theo}\label{sphi}
For $F\in C_c^\infty(G,\tau_\nu,\tau_\nu)$ we have the following inversion and Plancherel formulas
\begin{equation}\begin{split}\label{inversion2}
F(g)=\frac{1}{2\pi }\int_0^{+\infty}\Phi_{\nu,\lambda}(g)\mathcal{H}_\nu F(\lambda)\mid \mathbf{c}_\nu(\lambda)\mid^{-2}{\rm d}\lambda
+\sum_{\lambda_j\in D_\nu}\Phi_{\nu, \lambda_j}(g)\mathcal{H}_\nu f(\lambda_j)\, d_\nu(\lambda_j),
\end{split}\end{equation}
\begin{equation}\begin{split}\label{plancherel2}
 \int_G \parallel F(g)\parallel_{HS}^2 {\rm d}g=\frac{\nu+1}{2\pi }\int_0^{+\infty}\mid\mathcal{H}_\nu F((\lambda)\mid^2\mid \mathbf{c}_\nu(\lambda)\mid^{-2}{\rm d}\lambda+(\nu+1)\sum_{\lambda_j\in D_\nu} d_\nu(\lambda_j)\mid \mathcal{H}_\nu F((\lambda_j)\mid^2,
\end{split}\end{equation}
\end{theo}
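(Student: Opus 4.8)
The plan is to reduce Theorem~\ref{sphi} to the classical inversion and Plancherel theorems for the Jacobi transform \cite{Ko}, exploiting that a $\tau_\nu$-radial function on $G$ is encoded, through the Cartan decomposition, by a single even scalar profile on $\mathbb{R}$.

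\textbf{Step 1.} First I would pass from $F\in C_c^\infty(G,\tau_\nu,\tau_\nu)$ to a scalar function. Taking $k_1=m,k_2=e$ and then $k_1=e,k_2=m$ with $m\in M$ in the radiality relation shows that $F(a_t)$ commutes with $\tau_\nu(M)$, so by irreducibility of $\sigma_\nu=\tau_\nu|_M$ and Schur's lemma there is $f\colon\mathbb{R}\to\mathbb{C}$ with $F(a_t)=f(t)\,\mathrm{Id}_{V_\nu}$; since $a_{-t}$ is conjugate to $a_t$ by a representative in $K$ of the nontrivial Weyl element, $f$ is even, and $F$ is recovered by $F(k_1a_tk_2)=f(t)\,\tau_\nu((k_1k_2)^{-1})$. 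One checks that $F\mapsto f$ is a bijection of $C_c^\infty(G,\tau_\nu,\tau_\nu)$ onto the even elements of $C_c^\infty(\mathbb{R})$.

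\textbf{Step 2.} Next I would rewrite the spherical transform as a Jacobi transform. The function $g\mapsto\mathrm{Tr}[\Phi_{\nu,\lambda}(g^{-1})F(g)]$ is bi-$K$-invariant (using $\Phi_{\nu,\lambda}(k_1gk_2)=\tau_\nu(k_2^{-1})\Phi_{\nu,\lambda}(g)\tau_\nu(k_1^{-1})$ and the radiality of $F$), so by the integral formula \eqref{integ}, together with $\Phi_{\nu,\lambda}(a_t^{-1})=\varphi_{\nu,\lambda}(t)\,\mathrm{Id}$ (the trace spherical function being even) and $\mathrm{Tr}\,\mathrm{Id}_{V_\nu}=\nu+1$, one gets $\mathcal{H}_\nu F(\lambda)=\int_0^\infty\varphi_{\nu,\lambda}(t)f(t)\Delta(t)\,{\rm d}t$. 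Inserting $\varphi_{\nu,\lambda}(t)=(\cosh t)^\nu\phi_\lambda^{(\rho-2,\nu+1)}(t)$ and $\Delta(t)=(2\sinh t)^{4n-1}(2\cosh t)^3$, and noting that the Jacobi weight for $(\alpha,\beta)=(\rho-2,\nu+1)$ is $\Delta_{\alpha,\beta}(t)=(2\sinh t)^{2\alpha+1}(2\cosh t)^{2\beta+1}=(2\sinh t)^{4n-1}(2\cosh t)^{2\nu+3}$, this becomes
\[
\mathcal{H}_\nu F(\lambda)=2^{-2\nu}\int_0^\infty\phi_\lambda^{(\alpha,\beta)}(t)\,g(t)\,\Delta_{\alpha,\beta}(t)\,{\rm d}t,\qquad g(t):=(\cosh t)^{-\nu}f(t),
\]
that is, $\mathcal{H}_\nu F=2^{-2\nu}\,\mathcal{J}_{\alpha,\beta}g$, where $\mathcal{J}_{\alpha,\beta}$ is the Jacobi transform and $g$ is again even and compactly supported.

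\textbf{Step 3.} Then I would apply the Jacobi inversion and Plancherel theorems to $g$ and translate back, the only work being the identification of constants. From the explicit Jacobi $c$-function one verifies $c_{\alpha,\beta}(\lambda)=2^\nu\mathbf{c}_\nu(\lambda)$, hence $2^{2\nu}|c_{\alpha,\beta}(\lambda)|^{-2}=|\mathbf{c}_\nu(\lambda)|^{-2}$; that the poles of $c_{\alpha,\beta}(-\lambda)^{-1}$ in $\{\Im\lambda>0\}$ are exactly the points $\lambda_j=i(\nu-\rho+2-2j)$ of $D_\nu$; and that the residue weights $\tilde d_j$ occurring in the discrete part of the Jacobi inversion formula satisfy $2^{2\nu}\tilde d_j=d_\nu(\lambda_j)$, equivalently $\tilde d_j=-i\,\mathrm{Res}_{\mu=\lambda_j}(c_{\alpha,\beta}(\mu)c_{\alpha,\beta}(-\mu))^{-1}$. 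Substituting $f=(\cosh t)^\nu g$ and $\varphi_{\nu,\lambda}(t)=(\cosh t)^\nu\phi_\lambda^{(\alpha,\beta)}(t)$, and promoting the resulting scalar identities on $A$ to $\mathrm{End}(V_\nu)$-valued identities on $G$ via $\Phi_{\nu,\lambda}(a_t)=\varphi_{\nu,\lambda}(t)\,\mathrm{Id}$, $\tau_\nu$-radiality and $G=KAK$, gives \eqref{inversion2}. For \eqref{plancherel2} I would use $\|F(a_t)\|_{HS}^2=(\nu+1)|f(t)|^2$ and $(\cosh t)^{2\nu}\Delta(t)=2^{-2\nu}\Delta_{\alpha,\beta}(t)$, so that $\int_G\|F(g)\|_{HS}^2\,{\rm d}g=(\nu+1)2^{-2\nu}\int_0^\infty|g(t)|^2\Delta_{\alpha,\beta}(t)\,{\rm d}t$, and then insert the Jacobi Plancherel identity for $g$ with the same constant identifications. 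Steps 1 and 2 are routine; the hard part is the explicit computation with ratios of Gamma functions at the points $\lambda_j$ identifying the Jacobi residue weights $\tilde d_j$ with the prescribed $d_\nu(\lambda_j)$, and one must track the powers of $2$ and the $\frac{1}{2\pi}$ normalization consistently when passing between the analytic and the group-theoretic conventions.
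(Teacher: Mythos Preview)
Your proposal is correct and follows essentially the same route as the paper: both arguments pass to the scalar component $f$ of $F$ on $A$, rewrite $\mathcal{H}_\nu F$ as the Jacobi transform of $(4\cosh t)^{-\nu}f$ with parameters $(\alpha,\beta)=(\rho-2,\nu+1)$, and then invoke the Jacobi inversion and Plancherel formulas from the appendix. You give more detail on the constant identifications (the relation $\mathbf{c}_{\alpha,\beta}=2^\nu\mathbf{c}_\nu$ and the matching of the discrete spectral data $D_{\alpha,\beta}=D_\nu$, $2^{2\nu}\tilde d_j=d_\nu(\lambda_j)$) than the paper does, but the underlying strategy is identical.
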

In above $\parallel\,\parallel_{HS}$ stands for the Hilbert-Schmidt norm.
\begin{proof}
Let $F\in C_c^\infty(G,\tau_\nu,\tau_\nu)$ and let $f_\nu$ be its scalar component. Using  the integral formula \eqref{integ}, the identity $\Phi_{\nu,\lambda}(a_t)=\Phi_{\nu,\lambda}(a_{-t})=(\cosh t)^\nu\phi_\lambda^{(\rho-2,\nu+1)}(t)$ and the fact that $  \Delta(t)=(2\cosh t)^{-2\nu}\Delta_{\rho-2,\nu+1}$, we have
\begin{align}\begin{split}\label{sphi0}
\mathcal{H}_\nu F(\lambda)&=\int_0^\infty f_\nu(t)(\cosh t)^\nu\phi_\lambda^{(\rho-2,\nu+1)}(t)\,\Delta(t)\,{\rm d}t\\
&=\int_0^\infty f_\nu(t)(2^2\cosh t)^{-\nu}\phi_\lambda^{(\rho-2,\nu+1)}(t)\,\Delta_{\rho-2,\nu+1}(t)\,{\rm d}t.
\end{split}\end{align}
Thus the $\tau_\nu$-spherical transform $\mathcal{H}_\nu F$ may be written in terms of the Jacobi transform $\mathcal{J}^{\alpha,\beta}$, with $\alpha=\rho-2$ and $\beta=\nu+1$. Namely, we have
$$
\mathcal{H}_\nu F(\lambda)=\mathcal{J}^{\rho-2,\nu+1}[(2^2 \cosh t)^{-\nu}f_\nu](\lambda).
$$
We refer to \eqref{A5} in the Appendix for the definition of the Jacobi transform.\\
Now the theorem follows from the inversion and the Plancherel formulas for  the Jacobi transform \eqref{A6}, \eqref{A6'} and \eqref{A7} in the Appendix.
\end{proof}
For the proof of the surjectivity statement in Theorem \ref{fourier} we shall  need the following result
\begin{pro}\label{FS1}
Let $F\in C^{\infty}_c(G,\tau_\nu)$ and $\Phi\in C^\infty(G,\tau_\nu,\tau_\nu)$. Then we have
\begin{align*}
\mathcal{F}_\nu (F\ast \Phi)(\lambda,k)=\mathcal{H}_\nu \Phi(\lambda)\mathcal{F}_\nu F(\lambda,k), \quad \lambda\in \mathbb{C}, k\in K,
\end{align*}
where the convolution is defined by
\begin{align*}
(\Phi\ast F)(g)=\int_G \Phi_{\nu,\lambda}(x^{-1}g)F(x)\,{\rm d}x.
\end{align*}
\end{pro}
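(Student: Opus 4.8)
The plan is to turn the convolution into multiplication by commuting the Fourier kernel $e_{\lambda,\nu}$ past the $\tau_\nu$-radial factor $\Phi$. Everything hinges on the identity (call it $(\star)$)
\[\int_G e_{\lambda,\nu}(yg)\,\Phi(g)\,{\rm d}g=\mathcal{H}_\nu\Phi(\lambda)\,e_{\lambda,\nu}(y),\qquad y\in G,\]
after which the proposition is immediate: with $(\Phi\ast F)(g)=\int_G\Phi(x^{-1}g)F(x)\,{\rm d}x$, the unimodularity of $G$, Fubini, and the substitution $g\mapsto xg$ in the inner integral give
\[\mathcal{F}_\nu(\Phi\ast F)(\lambda,k)=\int_G\Big(\int_G e_{\lambda,\nu}(k^{-1}xg)\,\Phi(g)\,{\rm d}g\Big)F(x)\,{\rm d}x=\mathcal{H}_\nu\Phi(\lambda)\int_G e_{\lambda,\nu}(k^{-1}x)F(x)\,{\rm d}x=\mathcal{H}_\nu\Phi(\lambda)\,\mathcal{F}_\nu F(\lambda,k),\]
using $(\star)$ with $y=k^{-1}x$. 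For $\Phi$ of compact support --- the case actually needed, with $\Phi$ an approximate identity, in the proof of surjectivity in Theorem~\ref{fourier} --- all integrals converge absolutely and Fubini is unproblematic; for the $\Phi$ arising later, e.g.\ $\Phi=\Phi_{\nu,\lambda_j}$, one exploits its exponential decay on $A$ (so that $\Phi\ast F\in L^2(G,\tau_\nu)$ since $F$ has compact support) and argues by approximation with compactly supported $\tau_\nu$-radial functions.

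To prove $(\star)$ I would first remove the $y$-dependence with the cocycle relations $H(ab)=H(a\kappa(b))+H(b)$, $\kappa(ab)=\kappa(a\kappa(b))$ for the Iwasawa projection, which yield
\[e_{\lambda,\nu}(yg)={\rm e}^{(i\lambda-\rho)H(y^{-1})}\,e_{\lambda,\nu}\!\big(\kappa(y^{-1})^{-1}g\big).\]
Substituting this into $(\star)$, translating $g$ by $\kappa(y^{-1})$, and using the left $K$-covariance $\Phi(k^{-1}g)=\Phi(g)\tau_\nu(k)$ of a $\tau_\nu$-radial function, the left-hand side becomes ${\rm e}^{(i\lambda-\rho)H(y^{-1})}\big(\int_G e_{\lambda,\nu}(g)\Phi(g)\,{\rm d}g\big)\tau_\nu(\kappa(y^{-1}))^{-1}$; since the right-hand side equals ${\rm e}^{(i\lambda-\rho)H(y^{-1})}\mathcal{H}_\nu\Phi(\lambda)\tau_\nu(\kappa(y^{-1}))^{-1}$ and $\mathcal{H}_\nu\Phi(\lambda)$ is a scalar, $(\star)$ collapses to the single ${\rm End}(V_\nu)$-valued identity (call it $(\star\star)$)
\[\int_G e_{\lambda,\nu}(g)\,\Phi(g)\,{\rm d}g=\mathcal{H}_\nu\Phi(\lambda)\,{\rm Id}_{V_\nu}.\]

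The substantive step is $(\star\star)$. Since $e_{\lambda,\nu}(gk)=e_{\lambda,\nu}(g)\tau_\nu(k)$ and $\Phi(gk)=\tau_\nu(k)^{-1}\Phi(g)$, the integrand $g\mapsto e_{\lambda,\nu}(g)\Phi(g)$ is right $K$-invariant, so I can apply the integral formula \eqref{integ}; writing $\Phi(a_t)=f_\nu(t)\,{\rm Id}_{V_\nu}$ --- legitimate because $\sigma_\nu$ is irreducible, hence $\Phi(a_t)\in{\rm End}_M(V_\nu)=\mathbb{C}\,{\rm Id}_{V_\nu}$ --- and $\Phi(ka_t)=f_\nu(t)\tau_\nu(k)^{-1}$, $(\star\star)$ becomes
\[\int_G e_{\lambda,\nu}(g)\Phi(g)\,{\rm d}g=\int_0^\infty f_\nu(t)\Big(\int_K e_{\lambda,\nu}(ka_t)\tau_\nu(k)^{-1}\,{\rm d}k\Big)\Delta(t)\,{\rm d}t.\]
The inner $K$-integral commutes with $\sigma_\nu(M)$ --- conjugate by $\tau_\nu(m)$, substitute $k\mapsto m^{-1}k$, and use $e_{\lambda,\nu}(mg)=\tau_\nu(m)e_{\lambda,\nu}(g)$ for $m\in M$ --- so by Schur's lemma it is a scalar multiple of ${\rm Id}_{V_\nu}$; taking traces, using that the character $\chi_\nu$ of $\tau_\nu$ is real and a class function, and comparing with the Eisenstein-integral expression for the trace $\tau_\nu$-spherical function, that scalar is identified as $\varphi_{\nu,-\lambda}(t)=\varphi_{\nu,\lambda}(t)$ (the last equality because ${}_2F_1$ is symmetric in its first two parameters). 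Hence the integral equals $\big(\int_0^\infty f_\nu(t)\varphi_{\nu,\lambda}(t)\Delta(t)\,{\rm d}t\big)\,{\rm Id}_{V_\nu}=\mathcal{H}_\nu\Phi(\lambda)\,{\rm Id}_{V_\nu}$ by \eqref{sphi0}, which is $(\star\star)$.

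The main obstacle is exactly this reduction of the operator-valued integral in $(\star\star)$ to a scalar: the Schur-lemma step for $\sigma_\nu|_M$ together with the trace computation that matches the result to the explicit trace $\tau_\nu$-spherical function of \cite{VP}. The only other point requiring care is the absolute convergence (and the approximation argument) legitimising Fubini when $\Phi$ is not compactly supported, which is controlled by the exponential decay of $\Phi$ on $A$ and the compact support of $F$.
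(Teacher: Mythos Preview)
Your proof is correct and follows essentially the same route as the paper: both reduce, via the Iwasawa cocycle relations, to the identity $\int_G e_{\lambda,\nu}(g)\Phi(g)v\,{\rm d}g=\mathcal{H}_\nu\Phi(\lambda)\,v$ (your $(\star\star)$, the paper's \eqref{FS}). The paper merely states \eqref{FS} without proof, whereas you supply the Schur--lemma argument identifying the operator integral with the trace $\tau_\nu$-spherical function; the organization differs slightly---you factor out the $y$-dependence first, the paper splits $e_{\lambda,\nu}(k^{-1}xy)$ directly---but the substance is identical.
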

\begin{proof}
Let $\Phi\in C^\infty(G,\tau_\nu,\tau_\nu)$, $v\in V_\nu$, and  set $F_v=\Phi(.\,)v$. Then we have the following relation between the Fourier transform and the spherical transform
\begin{align}\label{FS}
\mathcal{F}_\nu F_v(\lambda,k)=\mathcal{H}_\nu \Phi(\lambda)\tau(k^{-1})v.
\end{align}
By definition 
\begin{align*}\begin{split}
\mathcal{F}_\nu(F\ast \Phi)(\lambda,k)&=\int_G\,\int_G e_\lambda^\nu(k^{-1}g)\Phi(x^{-1}g)F(x){\rm d}x{\rm d}g\\
&=\int_G {\rm d}x\int_G e_\lambda^\nu(k^{-1}xy)\Phi(y)F(x){\rm d}y
\end{split}\end{align*}
Using the following cocycle relations for the Iwasawa function $H(x)$ 
\begin{align*}
 H(xy)=H(x\kappa(y))+H(y),
\end{align*} 
and
\begin{align*}
\kappa(xy)=\kappa(x\kappa(y)),
\end{align*}
for all $x,y \in G$, we get the following identity 
\begin{align*}
e_\lambda^\nu(k^{-1}xy)={\rm e}^{(i\lambda-\rho)H(x^{-1}k)}e_\lambda^\nu(\kappa^{-1}(x^{-1}k)y),
\end{align*}
from which we obtain
\begin{align*}
\mathcal{F}_\nu(\Phi\ast F)(\lambda,k)=\int_G {\rm e}^{(i\lambda-\rho)H(x^{-1}k)}\left( \int_G e_{\lambda,\nu}(\kappa^{-1}(x^{-1}k) y)\Phi(y)F(x)\,{\rm d}y\right){\rm d}x.
\end{align*}
Next, put $\displaystyle h_v(y)=\Phi(y)v, v\in V_{\tau_\nu}$. Then  \eqref{FS} implies
\begin{align*}
\int_G e_{\lambda,\nu}(\kappa^{-1}(x^{-1}k) y)\Phi(y)F(x)\,{\rm d}y &=\mathcal{F}_\nu (h_{F(x)})(\lambda,\kappa^{-1}(x^{-1}k))\\
&=\mathcal{H}(\Phi)(\lambda)\tau_\nu(\kappa^{-1}(x^{-1}k))F(x),
\end{align*}
from which we deduce
\begin{align*}
\mathcal{F}_\nu(\Phi\ast F)(\lambda,k)=\mathcal{H}(\Phi)(\lambda)\int_G {\rm e}^{(i\lambda-\rho)H(x^{-1}k)}\tau_\nu(\kappa^{-1}(x^{-1}k))F(x){\rm d}x,
\end{align*}
and the proposition follows.
\end{proof}
We now come to the proof of  Theorem \ref{fourier}.
\begin{proof}
(i) We may follow the same method as in \cite{com}  to prove the  inversion formula \eqref{inversion1} and the Plancherel formula \eqref{plancherel1}  from Theorem \ref{sphi}. We give an outline of the proof.\\ Let $F\in C^\infty_c(G,\tau_\nu)$ and consider the $\tau_\nu$-radial function defined for any $g\in G$ by $$
\displaystyle F_{g,v}(x).w=\int_K<\tau_\nu(k)w,v>_\nu F(gkx)\, {\rm d}k,
$$
$v$ being a fixed vector in $ V_\nu$. Then a straightforward calculation shows that
\begin{align*}
\mathcal{H}_\nu F_{g,v}(\lambda)=\frac{1}{\nu+1}<(\Phi_{\nu,\lambda}\ast F)(g),v>_\nu.
\end{align*}
The inversion formula for the spherical transform together with  $Tr F_{g,v}(e)=<F(g),v>_\nu$ imply
\begin{align*}
F(g)=\frac{1}{2\pi}\int_0^ \infty (\Phi_{\nu,\lambda}\ast F)(g)\mid \mathbf{c}_\nu(\lambda)\mid^{-2}\, {\rm d}\lambda+\sum_{\lambda_j\in D_\nu}(\Phi_{\nu,\lambda_j}\ast F)(g)d_\nu(\lambda_j).
\end{align*}
To conclude use the following result for the translated spherical function ( see \cite{com} Proposition 3.3)
\begin{align}\label{symm}
\Phi_{\nu,\lambda}(x^{-1}y)=\int_K {\rm e}^{-(i\lambda+\rho)H(y^{-1}k)}{\rm e}^{(i\lambda-rho)H(x^{-1}k)}\tau_\nu(\kappa(y^{-1}k))\tau_\nu(\kappa^{-1}(x^{-1}k))\, {\rm d}k,
\end{align} 
to get 
\begin{align*}
(\Phi_{\nu,\lambda}\ast F)(g)=\int_K {\rm e}^{-(i\lambda+\rho)H(g^{-1}k)} \tau_\nu(\kappa(g^{-1}k))\mathcal{F}_\nu F(\lambda,k)\, {\rm d}k,
\end{align*}
and the inversion formula \eqref{inversion1} follows.\\
The proof of the Plancherel formula \eqref{plancherel1} is essentially the same as in the scalar case, so we omit it.\\
Note that  as a consequence of the Plancherel formula not involving the discrete series, we have
\begin{align*}
\int_G\parallel F(g)\parallel_\nu^2\, {\rm d}g_K=\frac{1}{\pi}\int_0^\infty\int_K \parallel\mathcal{F}_\nu F(\lambda,k)\parallel_\nu^2\,\mid \mathbf{c}_\nu(\lambda)\mid^{-2}{\rm d}\lambda\,{\rm d}k,
\end{align*}
for every $F\in L_{\textit{cont}}^2(G,\tau_\nu)$.\\
(ii) We prove the surjectivity statement. Suppose that there exists a function $f$ in $L^2_{\sigma_\nu}(\mathbb{R}^+\times K,\mid \mathbf{c}_\nu(\lambda)\mid^{-2}{\rm d}\lambda\, {\rm d}k)$ such that 
\begin{align*}
\int_0^\infty\int_K<f(\lambda,k),\mathcal{F}_\nu F(\lambda,k)>\mid \mathbf{c}_\nu(\lambda)\mid^{-2}{\rm d}\lambda \,{\rm d}k=0
\end{align*}
for all $F\in C_c^\infty(G,\tau_\nu)$. Changing $F$ into $F\ast\Phi$ where $\Phi\in C^\infty(G,\tau_\nu,\tau_\nu)$ and using Proposition \ref{FS1}, we have
\begin{align*}
\int_0^\infty\int_K <f(\lambda,k),\mathcal{F}_\nu F(\lambda,k)>\, \mathcal{H}_\nu \phi(\lambda)\mid \mathbf{c}_\nu(\lambda)\mid^{-2}{\rm d}\lambda \,{\rm d}k=0
\end{align*}
By the Stone-Weierstrass theorem, the algebra $\{\mathcal{H}_\nu \Phi, \Phi\in C^\infty(G,\tau_\nu,\tau_\nu)\}$ is dense in $C_e^\infty(\mathbb{R})$ the space of even continuous functions on $\mathbb{R}$ vanishing at infinity. Therefore for every $F\in C_c^\infty(G,\tau_\nu)$ there is a set $E_F$ of measure zero in $\mathbb{R}$ such that 
\begin{align*}
\int_K <f(\lambda,k),\mathcal{F}_\nu F(\lambda,k)> {\rm d}k=0
\end{align*}
for all $\lambda$ not in $E_F$ . The rest of the proof is based on an adaptation of the arguments given in \cite{H} Theorem 1.5, for the scalar case, and the proof of Theorem \ref{fourier} is completed.
\end{proof}
\section{Fourier restriction estimate}	
The main result of this section is the following  uniform continuity estimate for the Fourier-Helgason restriction operator. 
\begin{pro}\label{rest Fourier}
Let $\nu\in \mathbb{N}$. There exists a positive constant \(C_\nu\) such that for $\lambda \in \mathbb{R} \backslash \{0\}$ and $R>1$, we have
 \begin{align}\label{rest}
\bigg(\int_{K}\|\mathcal{F}_\nu F(\lambda,k)\|_\nu^{2}dk\bigg)^{1/2} \leq C_{\nu} |c_\nu(\lambda)| R^{1/2} \bigg(\int_{G/K}\|F(g)\|_\nu^{2}\, {\rm d}g_K\bigg)^{1/2},
\end{align}
for every $F\in L^{2}(G,\tau_\nu)$ with  \(\textit{supp} F \subset B(R)\).
\end{pro}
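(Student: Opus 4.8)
The plan is to reduce the vector-valued estimate to a scalar Fourier restriction estimate of the type already known for the Jacobi transform, exploiting the relation between $\mathcal{F}_\nu$ and the scalar Helgason–Fourier transform. First I would fix $\lambda\in\mathbb{R}\setminus\{0\}$ and observe that, by duality, \eqref{rest} is equivalent to a dispersive-type bound on the adjoint operator: for a section $f\in L^2(K,\sigma_\nu)$ one has
\begin{align*}
\langle \mathcal{F}_\nu F(\lambda,\cdot),f\rangle_{L^2(K,\sigma_\nu)}=\int_{G/K}\langle F(g),\mathcal{P}^\nu_{\bar\lambda}f(g)\rangle_\nu\,{\rm d}g_K,
\end{align*}
so that, since $\mathrm{supp}\,F\subset B(R)$, Cauchy–Schwarz gives
$|\langle \mathcal{F}_\nu F(\lambda,\cdot),f\rangle|\le \|F\|_{L^2(G,\tau_\nu)}\big(\int_{B(R)}\|\mathcal{P}^\nu_\lambda f(g)\|_\nu^2\,{\rm d}g_K\big)^{1/2}$. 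Thus \eqref{rest} follows once I establish the a priori bound
\begin{align*}
\int_{B(R)}\|\mathcal{P}^\nu_\lambda f(g)\|_\nu^2\,{\rm d}g_K\le C_\nu^2\,|\mathbf{c}_\nu(\lambda)|^2\,R\,\|f\|^2_{L^2(K,\sigma_\nu)}
\end{align*}
uniformly in $\lambda\in\mathbb{R}\setminus\{0\}$ and $R>1$. This is precisely the upper half of \eqref{Poisson estimates} in Theorem \ref{range}, but of course one cannot invoke Theorem \ref{range} here, since this proposition is an ingredient in its proof; so the bound must be obtained directly.

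To get that direct bound I would expand $\|\mathcal{P}^\nu_\lambda f(g)\|_\nu^2=\langle \mathcal{P}^\nu_\lambda f(g),\mathcal{P}^\nu_\lambda f(g)\rangle_\nu$ as a double integral over $K\times K$ and integrate in $g$ over $B(R)$ using the Cartan decomposition and the integral formula \eqref{integ}. The $g$-integration produces, after using the $K$-covariance, a kernel on $K\times K$ built from $\int_0^R \big(\text{product of two }e^{-(i\lambda+\rho)H(a_t^{-1}k)}\text{ factors}\big)\Delta(t)\,{\rm d}t$, which is controlled by the $\tau_\nu$-radial function; more precisely the resulting operator on $L^2(K,\sigma_\nu)$ has kernel essentially $\int_0^R \Phi_{\nu,\lambda}(\text{something})\,\Delta(t)\,{\rm d}t$ and reduces, via the scalar component $\varphi_{\nu,\lambda}(t)=(\cosh t)^\nu\phi_\lambda^{(\rho-2,\nu+1)}(t)$, to estimating $\int_0^R|\varphi_{\nu,\lambda}(t)|^2\Delta(t)\,{\rm d}t$. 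Here I would use the known asymptotics \eqref{asymp} together with the standard estimate $|\phi_\lambda^{(\alpha,\beta)}(t)|\le C\,|\mathbf{c}_\nu(\lambda)|\,e^{-\rho t}(1+t)$ for real $\lambda$ (a Jacobi-function analogue of Harish-Chandra's bound), which gives $|\varphi_{\nu,\lambda}(t)|^2\Delta(t)\lesssim |\mathbf{c}_\nu(\lambda)|^2(1+t)^2$ and hence $\int_0^R|\varphi_{\nu,\lambda}(t)|^2\Delta(t)\,{\rm d}t\lesssim |\mathbf{c}_\nu(\lambda)|^2 R^3$ — which is too lossy. The fix, as in Kaizuka's scalar argument, is not to estimate the kernel crudely but to use an $L^2\to L^2$ bound: decompose $\mathcal{P}^\nu_\lambda f$ using the Fourier expansion on $K/M$ (Peter–Weyl with respect to $\sigma_\nu$-isotypic components), so that the restriction of $\mathcal{P}^\nu_\lambda f$ to each sphere $\{a_t\cdot o\}$ becomes a sum over $K$-types with coefficients controlled by the generalized spherical functions, and then integrate in $t$ over $[0,R]$ using the uniform-in-$\lambda$ bound $|\mathbf{c}_\nu(\lambda)|^{-1}|\varphi_{\nu,\lambda}(t)|\,\Delta(t)^{1/2}\lesssim (1+t)^{1/2}$ on the diagonal $K$-type and faster decay on the others, yielding the linear-in-$R$ growth.

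The main obstacle will be exactly this last point: obtaining the sharp factor $R^{1/2}$ (equivalently, the linear growth $R$ in the squared estimate) rather than a higher power of $R$. This requires genuinely exploiting oscillation/orthogonality — either through the Plancherel decomposition of $L^2(K,\sigma_\nu)$ into $K$-types and precise uniform bounds on each matrix entry of the Eisenstein integral $\Phi_{\nu,\lambda}$, or through an Agmon–Hörmander-type a priori estimate for the Helmholtz-type equation $\Omega F=-(\lambda^2+\rho^2-\nu(\nu+2))F$ restricted to balls. I would follow the $K$-type route: use that $\mathcal{F}_\nu F(\lambda,\cdot)$ lies in $L^2(K,\sigma_\nu)$, expand in the orthonormal basis coming from $\widehat K_{\sigma_\nu}$, bound each coefficient by Cauchy–Schwarz against $F$ localized in $B(R)$ with weight the corresponding generalized spherical function, and sum using the uniform estimates for $\phi_\lambda^{(\rho-2,\nu+1)}$ and the explicit $\mathbf{c}_\nu(\lambda)$ from \eqref{c}. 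Since $\lambda$ is real, $|\mathbf{c}_\nu(\lambda)|^{-2}=\mathbf{c}_\nu(\lambda)^{-1}\mathbf{c}_\nu(-\lambda)^{-1}$ is the Plancherel density, and the uniformity in $\lambda$ is then built into the normalization; the only care needed is near $\lambda=0$, where $\mathbf{c}_\nu(\lambda)^{-1}$ vanishes, but there the factor $|\mathbf{c}_\nu(\lambda)|$ on the right-hand side of \eqref{rest} absorbs the degeneracy, so no separate small-$\lambda$ analysis is required beyond checking that all implied constants stay bounded.
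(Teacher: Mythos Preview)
Your duality reduction is correct --- the restriction estimate \eqref{rest} and the Poisson upper bound $\int_{B(R)}\|\mathcal{P}^\nu_\lambda f\|_\nu^2\,{\rm d}g_K\le C_\nu|\mathbf{c}_\nu(\lambda)|^2 R\,\|f\|^2$ are equivalent --- but the paper runs the implication in the opposite direction: it proves Proposition~\ref{rest Fourier} first, by an argument that never touches spherical functions or $K$-types, and then obtains the Poisson upper bound as Corollary~\ref{nc}. So your plan is trying to prove the hard bound in order to deduce the easy one.

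The paper's route is this: write $\mathcal{F}_\nu F(\lambda,k)=\mathcal{F}_{\mathbb{R}}\big(\mathcal{R}F(\cdot,k)\big)(\lambda)$ via the bundle-valued Radon transform; Lemma~\ref{Radon} gives $\textit{supp}\,\mathcal{R}F(\cdot,k)\subset[-R,R]$. By the Plancherel formula it suffices to show that, for a fixed $\lambda$, $|\mathbf{c}_\nu(\lambda)|^{-2}\int_K\|\mathcal{F}_\nu F(\lambda,k)\|_\nu^2\,{\rm d}k$ is controlled by $R$ times $\int_{\mathbb{R}}\int_K\|\mathcal{F}_\nu F(\mu,k)\|_\nu^2|\mathbf{c}_\nu(\mu)|^{-2}\,{\rm d}\mu\,{\rm d}k$. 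Now invoke Anker's Lemma~\ref{Ank}: the weight $|\mathbf{c}_\nu(\lambda)|^{-2}$ is comparable to a function $\eta_m(\lambda)$ whose inverse Euclidean Fourier transform $T$ has compact support, say in $[-R_0,R_0]$. Then $\eta_m(\lambda)\mathcal{F}_\nu F(\lambda,k)=\mathcal{F}_{\mathbb{R}}(T*\mathcal{R}F(\cdot,k))(\lambda)$ is the Euclidean Fourier transform of a function supported in $[-(R+R_0),R+R_0]$, and the trivial Cauchy--Schwarz bound $\|\widehat{h}\|_\infty^2\le |\textit{supp}\,h|\cdot\|h\|_2^2$ plus Euclidean Plancherel finish the job. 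No asymptotics, no $K$-type expansion, and the uniformity in $\lambda$ is automatic.

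Your proposed direct proof has a genuine gap. You correctly observe that the crude pointwise bound $|\varphi_{\nu,\lambda}(t)|\lesssim |\mathbf{c}_\nu(\lambda)|(1+t)e^{-\rho t}$ only yields $R^3$, and you propose to recover the sharp $R$ via $K$-type decomposition with ``faster decay on the others''. But that last claim is not justified and in fact is not how the sharpness comes about: higher $K$-type Eisenstein integrals do not decay faster in $t$ (they have the same leading exponential), and the $(1+t)$ loss in the Harish-Chandra estimate is a genuine small-$\lambda$ phenomenon that does not disappear by passing to nontrivial $K$-types. What actually removes the extra powers of $R$ is oscillation in the spectral parameter, which your kernel estimate (fixed $\lambda$, integrate in $t$) cannot see. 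The paper's Radon/Anker argument captures exactly this oscillation by turning the question into a compact-support statement on the Euclidean Fourier side; your approach, as written, has no mechanism to exploit it.
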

To prove this result we shall need estimates of the Harish-Chandra $c$-function. To this end we introduce  the function $\mathbf{b}_\nu(\lambda)$ defined on $\mathbb{R}$ by 
\begin{align*}
\mathbf{b}_\nu(\lambda)=
\begin{cases}
\mathbf{c}_\nu(\lambda)\quad \textit{if} \quad \frac{\nu-\rho+2}{2}\in \mathbb{Z}^+\\
\lambda\,\mathbf{c}_\nu(\lambda)\quad \textit{if}\quad  \frac{\nu-\rho+2}{2}\notin \mathbb{Z}^+
\end{cases}\,
\end{align*}
\begin{lem}
Assume $\nu>\rho-2$.
\begin{enumerate}
\item[(i)] The function $\mathbf{b}_\nu(\lambda)$ has no zero in $\mathbb{R}$.
\item[(ii)] There exists a positive constant $C$ such that for $\lambda\in \mathbb{R}$, we have
\begin{align}\label{b}
C^{-1}(1+\lambda^2)^{\frac{2\rho-4-\varepsilon(\nu)}{4}}\leq \mid \mathbf{b}_\nu(\lambda)\mid^{-1}\leq C (1+\lambda^2)^{\frac{2\rho-4-\varepsilon(\nu)}{4}},
\end{align}
\end{enumerate}
with $\varepsilon(\nu)=\pm 1$ according to $\frac{\nu-\rho+2}{2}\notin \mathbb{Z}^+$ or $\frac{\nu-\rho+2}{2}\in \mathbb{Z}^+$
\end{lem}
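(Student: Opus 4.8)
The plan is to read off both claims directly from the explicit formula
\[
\mathbf{c}_\nu(\lambda)=\frac{2^{\rho-i\lambda}\Gamma(\rho-1)\Gamma(i\lambda)}{\Gamma\!\left(\frac{i\lambda+\rho+\nu}{2}\right)\Gamma\!\left(\frac{i\lambda+\rho-\nu-2}{2}\right)}
\]
by analysing the zeros and poles of the Gamma factors along the line $i\lambda\in i\mathbb{R}$, and then extracting the precise polynomial rate of growth/decay via Stirling's formula. First I would treat the numerator: $\Gamma(\rho-1)$ is a nonzero constant since $\rho=2n+1\geq 3$, and $2^{\rho-i\lambda}$ has modulus $2^\rho$, so neither contributes zeros. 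The factor $\Gamma(i\lambda)$ has a simple pole at $\lambda=0$ and no zeros on $\mathbb{R}$; this is exactly why one multiplies by $\lambda$ in the definition of $\mathbf{b}_\nu$ when needed — more precisely, $\lambda\Gamma(i\lambda)$ is nonvanishing and finite near $0$, while for the other parity of $\frac{\nu-\rho+2}{2}$ the denominator supplies a compensating pole at $\lambda=0$ and one keeps $\mathbf{c}_\nu$ itself.

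For part (i) the key point is to show the two denominator Gamma factors $\Gamma\!\left(\frac{i\lambda+\rho+\nu}{2}\right)$ and $\Gamma\!\left(\frac{i\lambda+\rho-\nu-2}{2}\right)$ have no poles for real $\lambda\neq 0$, since a pole of the denominator would be a zero of $\mathbf{c}_\nu$. A pole of $\Gamma$ at a nonpositive integer would force $\frac{i\lambda+\rho\pm(\dots)}{2}\in-\mathbb{N}\cup\{0\}$, hence $i\lambda\in\mathbb{R}$, i.e. $\lambda\in i\mathbb{R}$; combined with $\lambda\in\mathbb{R}$ this gives $\lambda=0$. So on $\mathbb{R}\setminus\{0\}$ the denominator is finite and nonzero, and $\Gamma$ never vanishes, so $\mathbf{c}_\nu(\lambda)\neq0$ there; the factor of $\lambda$ (resp. the denominator pole) handles $\lambda=0$, proving $\mathbf{b}_\nu$ has no real zero. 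One should be slightly careful that $\nu>\rho-2$ is used: it guarantees $\frac{i\lambda+\rho-\nu-2}{2}$ genuinely can hit $0$ at $\lambda=0$ when the parity is integral, which is the source of the $\varepsilon(\nu)=+1$ case, whereas in the other parity it is the $\Gamma(i\lambda)$ pole that dominates.

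For part (ii), the estimate \eqref{b}, I would use the classical asymptotics $|\Gamma(x+iy)|\sim\sqrt{2\pi}\,|y|^{x-1/2}e^{-\pi|y|/2}$ as $|y|\to\infty$ (and the reflection/duplication-free observation that all four Gamma factors have argument with imaginary part proportional to $\lambda$). Writing $|\mathbf{c}_\nu(\lambda)|$ as a ratio, the exponential factors $e^{-\pi|\lambda|/2}$ from $\Gamma(i\lambda)$ in the numerator and from each of the two half-argument Gammas in the denominator combine as $e^{-\pi|\lambda|/2}/(e^{-\pi|\lambda|/4}e^{-\pi|\lambda|/4})=1$, so they cancel exactly and no exponential survives — this is the miracle that makes $|\mathbf{c}_\nu(\lambda)|^{-1}$ merely polynomial. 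Collecting the surviving powers of $|\lambda|$: numerator contributes $|\lambda|^{-1/2}$ from $\Gamma(i\lambda)$, denominator contributes $|\lambda|^{\frac{\rho+\nu}{2}-\frac12}$ and $|\lambda|^{\frac{\rho-\nu-2}{2}-\frac12}$, whose sum of exponents works out to $\rho-2-\frac12$, giving $|\mathbf{c}_\nu(\lambda)|\asymp|\lambda|^{-(2\rho-4-1)/2}$ for large $|\lambda|$, i.e. $|\mathbf{c}_\nu(\lambda)|^{-1}\asymp|\lambda|^{(2\rho-4-1)/2}$; multiplying by $\lambda$ in the non-integral-parity case shifts the exponent by $1$, accounting for $\varepsilon(\nu)=-1$ there. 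Since by part (i) $\mathbf{b}_\nu$ is continuous and nonvanishing on all of $\mathbb{R}$, the ratio $|\mathbf{b}_\nu(\lambda)|^{-1}(1+\lambda^2)^{-(2\rho-4-\varepsilon(\nu))/4}$ is a continuous positive function that tends to a finite positive limit at $\pm\infty$, hence is bounded above and below by positive constants on $\mathbb{R}$, which is exactly \eqref{b}. The main obstacle is purely bookkeeping: tracking the half-integer exponents and the exponential cancellation in Stirling's formula carefully enough to land on the stated exponent $\frac{2\rho-4-\varepsilon(\nu)}{4}$ with the correct sign convention for $\varepsilon(\nu)$; there is no conceptual difficulty once the pole/zero structure from part (i) is in hand.
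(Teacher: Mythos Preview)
Your overall strategy is correct and close to the paper's, but you made precisely the bookkeeping errors you warned yourself about. The two denominator Gammas contribute exponents $\frac{\rho+\nu}{2}-\frac12$ and $\frac{\rho-\nu-2}{2}-\frac12$, which sum to $\rho-2$ (not $\rho-2-\frac12$); combined with $|\Gamma(i\lambda)|\sim|\lambda|^{-1/2}$ this gives $|\mathbf{c}_\nu(\lambda)|^{-1}\asymp|\lambda|^{(2\rho-3)/2}$, not $|\lambda|^{(2\rho-5)/2}$. Consequently your $\varepsilon(\nu)$ assignments are swapped: in the case $\frac{\nu-\rho+2}{2}\in\mathbb{Z}^+$ one has $\mathbf{b}_\nu=\mathbf{c}_\nu$, so $|\mathbf{b}_\nu|^{-1}\asymp(1+\lambda^2)^{(2\rho-3)/4}$ and hence $\varepsilon(\nu)=-1$; multiplying by $\lambda$ in the non-integral case \emph{lowers} the exponent of $|\mathbf{b}_\nu|^{-1}$ to $(2\rho-5)/2$, i.e.\ $\varepsilon(\nu)=+1$. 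Part (i) is fine as you wrote it.

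On the method for (ii), the paper makes one tactical choice that sidesteps your exponential-cancellation step entirely. Rather than applying Stirling to each factor and verifying that the $e^{-\pi|\lambda|/2}$ contributions cancel, it uses $\lambda\Gamma(i\lambda)=-i\Gamma(i\lambda+1)$ together with the duplication formula $\Gamma(2z)=\frac{2^{2z-1}}{\sqrt{\pi}}\Gamma(z)\Gamma(z+\tfrac12)$ to rewrite the numerator as a constant times $\Gamma(\frac{i\lambda+1}{2})\Gamma(\frac{i\lambda+2}{2})$. All four Gamma arguments then share the \emph{same} imaginary part $\lambda/2$, and the single ratio asymptotic $\Gamma(z+a)/\Gamma(z)\sim z^{a}$ delivers the power of $|\lambda|$ directly, with no exponentials to track. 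Your direct-Stirling route is perfectly valid once the arithmetic is repaired; the duplication trick simply makes the computation shorter and less error-prone.
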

\begin{proof}
\begin{enumerate}
\item[(i)] If $\frac{\nu-\rho+2}{2}\notin \mathbb{Z}^+$, then $\mathbf{b}_\nu(\lambda)=\frac{2^{\rho+\nu-i\lambda}\Gamma(\rho-1)\Gamma(i\lambda+1)}{\Gamma(\frac{i\lambda+\rho+\nu}{2})\Gamma(\frac{i\lambda+\rho-\nu-2}{2})}$, and clearly $\mathbf{b}_\nu(\lambda)$ has no zero on $\mathbb{R}$.\\
If $\frac{\nu-\rho+2}{2}\in \mathbb{Z}^+$ then $\mathbf{b}_\nu(\lambda)$ a priori can have zero and pole at $\lambda=0$.  This is not the case, since 
\begin{align*}
\lim_{\lambda\rightarrow 0}\mathbf{b}_\nu(\lambda)=(-1)^{\frac{\nu-\rho+2}{2}}\frac{2^{\rho+\nu}\Gamma(\rho-1)(\frac{\nu-\rho+2}{2})!}{\Gamma(\frac{\rho+\nu}{2})}.
\end{align*}
\item[(ii)] To prove the estimate \eqref{b} we shall use the following property of the $\Gamma$-function
\begin{align}\label{gamma}
\lim_{\mid z\mid\rightarrow \infty}\frac{\Gamma(z+a)}{\Gamma(z)}z^{-a}=1, \, \, \mid \arg(z)\mid<\pi-\delta,
\end{align}
where $a$ is any complex number, and  $\log$ is the principal value of the logarithm and $\delta>0$.\\
Assume first that $\frac{\nu-\rho+2}{2}\notin \mathbb{Z}^+$. Using the duplicata formula for the function gamma
\begin{align*}
\Gamma(2z)=\frac{2^{2z-2}}{\sqrt{\pi}}\Gamma(z)\Gamma(z+\frac{1}{2}),
\end{align*}
we rewrite $ \mathbf{b}_\nu(\lambda)$ as 
\begin{align*}
\mathbf{b}_\nu(\lambda)=\frac{2^{\rho+\nu-1}}{\sqrt{\pi}}\frac{\Gamma(\frac{i\lambda+1}{2})\Gamma(\frac{i\lambda+2}{2})}{\Gamma(\frac{i\lambda+\rho+\nu}{2})\Gamma(\frac{i\lambda+\rho-\nu-2}{2})}.
\end{align*}
It follows from \eqref{gamma} that for every $\lambda\in \mathbb{R}$, we have
\begin{align*}
\mid \mathbf{b}_\nu(\lambda)\mid\leq C(1+\lambda^2)^{-\frac{2\rho-5}{4}}
\end{align*}
and 
\begin{align*}
\mid \mathbf{b}_\nu(\lambda)\mid^{-1}\leq C(1+\lambda^2)^{\frac{2\rho-5}{4}}.
\end{align*}
The proof for the case $\frac{\nu-\rho+2}{2}\in \mathbb{Z}^+$ follows the same line as in the case $\frac{\nu-\rho+2}{2}\notin \mathbb{Z}^+$, so we omit it.
\end{enumerate}
This finishes the proof of the Lemma.
\end{proof}
Let us recall from \cite{A} an auxiliary lemma which will be useful for the proof of Proposition \ref{rest Fourier}.\\
Let \(\eta\) be a positive Schwartz function on \({\R}\) whose Fourier transform has a compact support. For \(m\in {\R}\), set
\begin{equation*}
\eta_{m}(x)=\int_{\R}\eta(t)(1+|t-x|)^{m/2}\, {\rm d}t.
\end{equation*}
\begin{lem}\label{Ank}
\item[i)] \(\eta_m\) is a positive \(C^\infty\)-function with
\begin{equation}
C^{-1}(1+t^2)^{\frac{m}{2}} \leq \eta_{m}(t) \leq C(1+t^2)^{\frac{m}{2}},
\end{equation}
for some positive constant \(C\).
\item[ii)] The Fourier transform of \(\eta_m\) has a compact support.
\end{lem}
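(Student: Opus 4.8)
The plan is to derive both parts from standard facts about Fourier transforms and from the explicit integral defining $\eta_m$. Throughout, write $\widehat{\phantom{f}}$ for the Fourier transform on $\mathbb{R}$, and recall the hypothesis: $\eta$ is a positive Schwartz function whose Fourier transform $\widehat{\eta}$ is supported in some fixed compact interval $[-A,A]$.

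\textbf{Part (i).} First I would check that $\eta_m$ is well defined and smooth. Since $\eta$ is Schwartz it decays faster than any polynomial, so for each fixed $x$ the integrand $\eta(t)(1+|t-x|)^{m/2}$ is integrable (the polynomial factor grows at most like $|t|^{|m|/2}$ at infinity); differentiating under the integral sign in $x$ is justified by the same domination, using that $\partial_x^k (1+|t-x|)^{m/2}$ is again controlled by a fixed polynomial in $t$ uniformly for $x$ in compacta, whence $\eta_m\in C^\infty(\mathbb{R})$. For the two-sided bound, the key elementary inequality is the Peetre-type estimate
\begin{equation*}
(1+|t-x|)^{m/2}\asymp (1+|x|)^{m/2}(1+|t|)^{|m|/2}\quad\text{up to constants depending only on }m,
\end{equation*}
more precisely $C^{-1}(1+|x|)^{m/2}(1+|t|)^{-|m|/2}\le (1+|t-x|)^{m/2}\le C(1+|x|)^{m/2}(1+|t|)^{|m|/2}$. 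Multiplying by $\eta(t)\ge 0$ and integrating, the upper bound follows because $\int_{\mathbb{R}}\eta(t)(1+|t|)^{|m|/2}\,{\rm d}t<\infty$ ($\eta$ Schwartz), and the lower bound follows because $\int_{\mathbb{R}}\eta(t)(1+|t|)^{-|m|/2}\,{\rm d}t$ is a strictly positive constant (the integrand is positive on a set of positive measure). This gives $C^{-1}(1+x^2)^{m/2}\le \eta_m(x)\le C(1+x^2)^{m/2}$ after replacing $(1+|x|)$ by the comparable quantity $(1+x^2)^{1/2}$; positivity of $\eta_m$ is immediate from positivity of the integrand.

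\textbf{Part (ii).} The observation is that $\eta_m$ is a convolution: with $g_m(y):=(1+|y|)^{m/2}$ we have $\eta_m(x)=\int_{\mathbb{R}}\eta(t)g_m(x-t)\,{\rm d}t=(\eta * \widecheck{g_m})(x)$ after a harmless reflection, or simply $\eta_m = \eta * g_m$ once one writes $g_m(x-t)$ with the correct sign (the substitution $t\mapsto -t$ together with evenness of $g_m$ makes this exact). Hence, at least formally, $\widehat{\eta_m}=\widehat{\eta}\cdot\widehat{g_m}$. The function $g_m$ is not integrable, so $\widehat{g_m}$ must be interpreted as a tempered distribution; but this is harmless because $\widehat{\eta}$ is a smooth compactly supported function, so the product $\widehat{\eta}\cdot\widehat{g_m}$ is a compactly supported distribution, supported inside ${\rm supp}\,\widehat{\eta}\subseteq[-A,A]$. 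To make this rigorous I would argue directly: for a test function $\varphi$,
\begin{equation*}
\langle \widehat{\eta_m},\varphi\rangle=\langle \eta_m,\widehat{\varphi}\rangle=\int_{\mathbb{R}}\int_{\mathbb{R}}\eta(t)g_m(x-t)\widehat{\varphi}(x)\,{\rm d}x\,{\rm d}t,
\end{equation*}
and recognizing the inner $x$-integral as $\langle g_m(\cdot-t),\widehat{\varphi}\rangle$ and using that translation corresponds to modulation on the Fourier side, one obtains $\langle\widehat{\eta_m},\varphi\rangle=\langle\widehat{g_m},\,\widehat{\eta}\,\varphi\rangle$ (Fubini is justified once $\varphi$ is chosen so that $\widehat\varphi$ is Schwartz, i.e. always). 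Since $\widehat{g_m}$ is a fixed tempered distribution and $\widehat\eta\,\varphi$ vanishes whenever $\varphi$ vanishes on $[-A,A]$, we conclude $\widehat{\eta_m}$ is supported in $[-A,A]$, a compact set independent of $m$.

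\textbf{Main obstacle.} The only genuinely delicate point is Part (ii): $g_m$ is a tempered distribution of positive order, not an $L^1$ or $L^2$ function, so one cannot invoke the convolution theorem in its classical form. The clean way around this is exactly the distributional pairing computation above, which never requires knowing $\widehat{g_m}$ explicitly — one only needs that it is a well-defined tempered distribution and that multiplication by the smooth compactly supported function $\widehat\eta$ cannot enlarge support. Everything in Part (i) is routine domination and the Peetre inequality. Since this lemma is quoted from \cite{A}, I would keep the write-up brief, stressing the convolution structure $\eta_m=\eta*g_m$ as the conceptual core of both parts.
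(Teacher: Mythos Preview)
The paper does not supply a proof of this lemma; it is simply quoted from Anker \cite{A} (``Let us recall from \cite{A} an auxiliary lemma\ldots''). So there is nothing to compare against, and your proposal stands on its own.

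Your argument is essentially correct. The convolution structure $\eta_m=\eta\ast g_m$ with $g_m(y)=(1+|y|)^{m/2}$ is indeed the conceptual core, the Peetre-type inequality gives the two-sided bound in (i), and the distributional support argument for (ii) is clean and accurate (since $\widehat{\eta}\in C_c^\infty$, the product $\widehat{\eta}\cdot\widehat{g_m}$ is a well-defined compactly supported distribution).

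One small technical slip in Part (i): the map $x\mapsto (1+|t-x|)^{m/2}$ is not $C^2$ at $x=t$ (the absolute value has a corner), so the sentence ``$\partial_x^k(1+|t-x|)^{m/2}$ is again controlled by a fixed polynomial'' is not literally valid for $k\ge 2$. The fix is already implicit in your Part (ii): write $\eta_m=\eta\ast g_m$ and put every derivative on the Schwartz factor, $\partial_x^k\eta_m=(\partial^k\eta)\ast g_m$; alternatively invoke the standard fact that the convolution of a Schwartz function with a tempered distribution is smooth. With that adjustment the write-up is complete.
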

In order to prove the Fourier restriction Theorem, we need to introduce the bundle valued Radon transform, see \cite{BOS} for more informations.\\
The Radon transform for \(F\in C_c^\infty(G,\tau_{\nu})\) is defined  by
\[\mathcal{R}F(g)=   e^{\rho H(g)}\int_{N}F(gn) dn.\]
We set $\mathcal{R}F(t,k)=\mathcal{R}F(k a_t)$. Then, using the Iwaswa decomposition $G=NAK$, we may rewrite the Helgason-Fourier transform as
\begin{equation*}
 \mathcal{F}_\nu F(\lambda,k) = \mathcal{F}_{\mathbb{R}}(\mathcal{R}F(\cdot,k ))(\lambda),
\end{equation*}
where   
$$
\mathcal{F}_{\mathbb{R}}\phi(\lambda)=\int_{\mathbb{R}} e^{-i\lambda t}\phi(t)\,{\rm d}t, 
$$   
is the Euclidean Fourier transform of $\phi$ a $V_\nu$-valued smooth function with compact support in $\mathbb{R}$.\\
We define on  $\mathfrak{p}$ the scalar product $<X,Y>=\frac{1}{2}Tr(XY)$ and denote by $\mid \,\,\mid$ the corresponding norm.  It induces  a distance function  $d$ on $G/K$. By the Cartan decomposition $G=K\exp \mathfrak{p}$, any $g\in G$ may be written uniquely as $g=k\exp X$, so that $d(0,gK)=\mid X\mid$. Define the open ball centred at $0$ and of radius $R$ by $B(R)=\{gK\in G/K;\quad d(0,gK)<R\}$.
\begin{lem}\label{Radon}
Let \(F\in C_0^\infty(G,\tau_\nu)\). 
 If $ \textit{supp}\, F\subset \overline{B(R)}$, then $\textit{supp}\, \mathcal{R} F\subset [-R,R]\times K$.
\end{lem}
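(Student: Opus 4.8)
The plan is to work directly with the Iwasawa coordinates and the definition of the Radon transform. Recall that $\mathcal{R}F(t,k)=e^{\rho t}\int_N F(k a_t n)\, dn$, so fixing $k\in K$ and $t\in\mathbb{R}$, the quantity $\mathcal{R}F(t,k)$ is an integral of $F$ over the submanifold $\{k a_t n : n\in N\}$ of $G$; in the symmetric space $G/K$ this is (a $K$-translate of) a horocycle. The claim will follow once I show that if $|t|>R$ then every point $k a_t n\cdot 0$ lies outside $\overline{B(R)}$, i.e. $d(0, k a_t n\cdot 0)>R$. Since $d$ is $K$-invariant and $k\in K$, this reduces to showing $d(0, a_t n\cdot 0)>R$ whenever $|t|>R$, for all $n\in N$.

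The key geometric input is the standard estimate $d(0, a_t n\cdot 0)\geq |t|$ for all $n\in N$, $t\in\mathbb{R}$. First I would recall why this holds: writing the Cartan decomposition $a_t n = k_1 \exp(H') k_2$ with $H'\in\overline{\mathfrak{a}^+}$, one has $d(0, a_t n\cdot 0)=|H'|$, and the Iwasawa projection satisfies $H(a_t n) = H(a_t)=t\alpha$ (since $n\in N$ and $a_t$ normalizes $N$), while a general inequality comparing the Cartan and Iwasawa projections — namely that the norm of the $A^+$-component dominates the norm of the Iwasawa $A$-component, $\|A^+(g)\|\geq \|H(g)\|$ — gives $|H'|=\|A^+(a_t n)\| \geq \|H(a_t n)\| = |t|$. (Concretely in rank one this can also be read off from \eqref{component}: $\cosh A^+(g)=|d|$ and $e^{H(g)}=|ce_1+d|$, and one checks $|ce_1+d|\le |c|+|d| $ together with the relation constraining the block entries of $g\in Sp(n,1)$ forces $|d|\ge e^{|H(g)|}$ up to the appropriate normalization; I would give the short matrix computation here.) Hence $d(0,a_t n\cdot 0)\ge |t|$.

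Combining these: if $\operatorname{supp} F\subset\overline{B(R)}$ and $|t|>R$, then for every $k\in K$ and every $n\in N$ we have $d(0, k a_t n\cdot 0) = d(0, a_t n\cdot 0)\ge |t|>R$, so $k a_t n\notin \operatorname{supp} F$; therefore the integrand in $\mathcal{R}F(t,k)=e^{\rho t}\int_N F(k a_t n)\,dn$ vanishes identically, giving $\mathcal{R}F(t,k)=0$. Thus $\operatorname{supp}\mathcal{R}F\subset[-R,R]\times K$, as claimed. I do not expect a genuine obstacle here; the only point requiring a little care is the inequality $\|A^+(g)\|\geq\|H(g)\|$ (equivalently the convexity statement that the Iwasawa projection lies in the convex hull of the Weyl orbit of the Cartan projection), which is classical but should be cited or verified by the explicit rank-one formulas \eqref{component} to keep the argument self-contained.
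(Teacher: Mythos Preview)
Your proposal is correct and follows the same route as the paper: both arguments hinge on the inequality $d(0,\,k e^{tH} n K)\geq |t|$, which forces the integrand in $\mathcal{R}F(t,k)=e^{\rho t}\int_N F(ka_t n)\,dn$ to vanish whenever $|t|>R$. The paper simply cites this inequality from Helgason \cite{H1}, p.~476, whereas you sketch a self-contained proof via $|H(g)|\leq A^+(g)$ (Kostant convexity in rank one, or a direct check from the explicit formulas \eqref{component}); the substance is identical.
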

\begin{proof}
As (see [\cite{H1}, page 476]
$$
d(0, k{\rm e}^{tH}nK)\geq \mid t\mid,\quad  k\in K , n\in N, t\in \mathbb{R}
$$
it follows that $\textit{supp}\, \mathcal{R} F\subset [-R,R]\times K$ if $ \textit{supp}\, F\subset \overline{B(R)}$
\end{proof}
\textbf{Proof of Proposition \ref{rest Fourier}.} 
It suffices to prove the estimate \eqref{rest} for functions $F\in C_c^\infty(G,\tau_\nu)$ supported in $B(R)$. It follows from the Plancherel formula \eqref{plancherel1} that 
\begin{align*}
\int_{B(R)}\parallel F(g)\parallel_\nu^2\,{\rm d}g_K\geq \int_K\int_{\mathbb{R}}\parallel \mathcal{F}_\nu F(\lambda,k)\parallel_\nu^2\,\mid \mathbf{c}_\nu(\lambda)\mid^{-2} {\rm d}\lambda\,{\rm d}k
\end{align*}
Therefore it is sufficient to show
\begin{align}\label{f1}
\int_K\int_{\mathbb{R}}\parallel \mathcal{F}_\nu F(\lambda,k)\parallel_\nu^2\,\mid \mathbf{c}_\nu(\lambda)\mid^{-2} {\rm d}\lambda\,{\rm d}k\geq C\,\frac{\mid \mathbf{c}_\nu(\lambda)\mid^{-2}}{R}\int_{\mathbb{R}}\parallel \mathcal{F}_\nu F(\lambda,k)\parallel_\nu^2\, {\rm d}k,
\end{align}
for some positive constant $C$.\\
By \eqref{b} we have
$
\mid \mathbf{c}_\nu(\lambda)\mid^{-1}\asymp \eta_{\frac{2\rho-3}{2}}(\lambda).
$
Therefore  \eqref{f1} is equivalent to 
\begin{align}\label{f2}
\frac{\eta_{\frac{2\rho-3}{2}}(\lambda)}{R}\int_K \parallel \mathcal{F}_\nu F(\lambda,k)\parallel_\nu^2\, {\rm d}k\leq
\int_K\int_{\mathbb{R}}\parallel \mathcal{F}_\nu F(\lambda,k)\parallel_\nu^2\, \eta_{\frac{2\rho-3}{2}}(\lambda){\rm d}\lambda\,{\rm d}k
\end{align}
Let $T$ be the tempered distribution on $\mathbb{R}$ defined by $T:=\mathcal{F}^{-1}_{\mathbb{R}} \eta_{\frac{2\rho-3}{2}}$.   By Lemma \ref{Ank}, $T$ is compactly supported . Let $R_0>1$ such that  $\mathit{supp}\,T\subset [-R_0,R_0]$. Then \eqref{f2} is equivalent to 
\begin{align}\label{f3}
\int_K\parallel\mathcal{F}_{\mathbb{R}}(T\ast \mathcal{R}F(.\,,k))(\lambda)\parallel_\nu^2\,{\rm d}k\leq C R\int_K\int_{\mathbb{R}}\mathcal{F}_{\mathbb{R}}(T\ast \mathcal{R}F(.\,,k))(\lambda)\parallel_\nu^2\,{\rm d}\lambda\,{\rm d}k,
\end{align}
where $\ast$ denotes the convolution on $\mathbb{R}$.\\
From $\textit{supp} T\subset [-R_0,R_0]$ and Lemma \ref{Radon}, it follows that  for any $k\in K$, $\textit{supp} \,(T\ast \mathcal{R}F(.\,,k)) \subset [-(R+R_0), R+R_0]$.\\
Thus
\begin{align*}
\int_K \parallel\mathcal{F}_{\mathbb{R}}(T\ast\mathcal{R}F(.\,,k)(\lambda)\parallel_\nu^2\, {\rm d}k \leq 2(R+R_0)\int_K\int_{\mathbb{R}}\parallel(T\ast\mathcal{R}F(.\, k))(t)\parallel_\nu^2\,{\rm d}t\, {\rm d}k
\end{align*}
Next use the Euclidean Plancherel formula  to get \eqref{f3}, and the proof is finished.\\
As a  consequence of Proposition \ref{rest Fourier}, we obtain the uniform continuity estimate for the Poisson transform $\mathcal{P}^\nu_\lambda$.
\begin{cor}\label{nc} Let $\nu\in\mathbb{N}$. There exists a positive constant \(C_{\nu}\) such that for $\lambda \in \mathbb{R} \backslash \{0\}$, we have
\begin{align}\label{N1}
\underset{R > 1}{\sup}\left( \frac{1}{R}\int_{B(R)}\parallel\mathcal{P}^\nu_\lambda f(g)\parallel_\nu^{2}{\rm d}g_K\right)^{1/2}\leq C_\nu\,  |c_\nu(\lambda)| \parallel f\parallel_{L^{2}(K,\sigma_\nu)}
\end{align}
for every $f\in L^{2}(K,\sigma_\nu)$.
\end{cor}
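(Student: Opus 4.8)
The plan is to deduce the estimate from the Fourier restriction estimate of Proposition \ref{rest Fourier} by a duality argument, exactly mirroring the way the forward half of a Fourier restriction/extension pair is obtained from the other. Fix $\lambda\in\mathbb{R}\setminus\{0\}$ and $R>1$. First I would observe that, for a test section $F\in C_c^\infty(G,\tau_\nu)$ supported in $B(R)$ and any $f\in L^2(K,\sigma_\nu)$, the Poisson transform $\mathcal{P}_\lambda^\nu f$ is paired with $F$ against the $L^2(G,\tau_\nu)$ inner product, and that Fubini together with the definitions of $e_{\lambda,\nu}$ and $\mathcal{F}_\nu$ gives the adjointness relation
\begin{align*}
\int_{G/K}\big\langle \mathcal{P}_\lambda^\nu f(g),F(g)\big\rangle_\nu\,{\rm d}g_K=\int_K\big\langle f(k),\mathcal{F}_\nu F(-\lambda,k)\big\rangle_\nu\,{\rm d}k,
\end{align*}
using $e_{\lambda,\nu}^\ast(k^{-1}g)={\rm e}^{-(i\lambda+\rho)H(g^{-1}k)}\tau_\nu(\kappa(g^{-1}k))$ and the cocycle identities for $H$ and $\kappa$ already recorded in the excerpt. (One must be a little careful about which sign of $\lambda$ appears; since $\lambda$ is real, $|\mathbf{c}_\nu(-\lambda)|=|\mathbf{c}_\nu(\lambda)|$, so the constant in Proposition \ref{rest Fourier} is unaffected.)

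Next I would apply Cauchy–Schwarz on $K$ to the right-hand side and then invoke Proposition \ref{rest Fourier} with the test function $F$ (whose support lies in $B(R)$): this yields
\begin{align*}
\left|\int_{G/K}\big\langle \mathcal{P}_\lambda^\nu f(g),F(g)\big\rangle_\nu\,{\rm d}g_K\right|
\le \|f\|_{L^2(K,\sigma_\nu)}\,\Big(\int_K\|\mathcal{F}_\nu F(-\lambda,k)\|_\nu^2\,{\rm d}k\Big)^{1/2}
\le C_\nu\,|\mathbf{c}_\nu(\lambda)|\,R^{1/2}\,\|f\|_{L^2(K,\sigma_\nu)}\,\|F\|_{L^2(G,\tau_\nu)}.
\end{align*}
Taking the supremum over all such $F$ with $\|F\|_{L^2(G,\tau_\nu)}\le 1$ and supported in $B(R)$, and using that $C_c^\infty$-sections supported in $B(R)$ are dense in $L^2(B(R),\tau_\nu)$, the left-hand side becomes $\big(\int_{B(R)}\|\mathcal{P}_\lambda^\nu f(g)\|_\nu^2\,{\rm d}g_K\big)^{1/2}$. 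Dividing by $R^{1/2}$ and taking the supremum over $R>1$ gives precisely \eqref{N1}. A small technical point is that $\mathcal{P}_\lambda^\nu f$ is a priori only a smooth section, not obviously locally $L^2$ with controlled growth; but the estimate just derived shows its restriction to each $B(R)$ lies in $L^2$, which is all that is needed, so no separate justification is required beyond noting that the pairing above makes sense for $F\in C_c^\infty$.

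The only genuine obstacle is the bookkeeping in the adjoint identity: one has to check that the integral kernel of $\mathcal{P}_\lambda^\nu$ is, up to the sign flip $\lambda\mapsto-\lambda$ and complex conjugation built into $e_{\lambda,\nu}^\ast$, the transpose of the kernel of $\mathcal{F}_\nu$, and that the $\tau_\nu$-covariance of $f$ and $F$ is compatible with this (this is where the identification of $L^2$-sections of $K\times_M V_\nu$ with $L^2(K,\sigma_\nu)$ is used). Once that identity is in hand, the corollary is a formal consequence of Proposition \ref{rest Fourier} and Cauchy–Schwarz; no new analytic input is needed.
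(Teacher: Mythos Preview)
Your proof is correct and follows exactly the paper's argument: the paper also derives the adjointness identity between $\mathcal{P}^\nu_\lambda$ and the restriction $\mathcal{F}_\nu(\cdot)(\lambda,\cdot)$, applies Cauchy--Schwarz on $K$, invokes Proposition~\ref{rest Fourier}, and then takes the supremum over test sections $F$ supported in $B(R)$. The only cosmetic difference is that the paper writes the duality with $\mathcal{F}_\nu F(\lambda,k)$ rather than $\mathcal{F}_\nu F(-\lambda,k)$ (a matter of which slot of $\langle\cdot,\cdot\rangle_\nu$ is conjugate-linear), which as you note is immaterial since $|\mathbf{c}_\nu(\lambda)|=|\mathbf{c}_\nu(-\lambda)|$ for real $\lambda$.
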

\begin{proof} Let $F\in L^2(G,\tau_\nu)$ with $\textit{supp}\,F\subset B(R)$, and let \(f\in L^{2}(K,\sigma_\nu)\). Since \(\lambda\) is real and $\tau_\nu$ is unitary,  the Poisson transform and the restriction Fourier transform are related by the following formula
\begin{align*}
\int_{B(R)}<\mathcal{P}^\nu_\lambda f(g), F(g)>_\nu{\rm d}g_K=\int_K <f(k),\mathcal{F}_\nu F(\lambda,k) >_\nu{\rm d}k.
\end{align*}
Thus 
\begin{align*}\begin{split}
\mid \int_{B(R)}<\mathcal{P}^\nu_\lambda f(g), F(g)>_\nu{\rm d}g_K\mid &\leq \|f\|_{L^{2}(K,\sigma_\nu)}(\int_K \parallel \mathcal{F}_\nu F(\lambda,k) \parallel_\nu^2 {\rm d}k)^{\frac{1}{2}}\\
&\leq C_\nu |c_\nu(\lambda)| R^{1/2} \parallel f\parallel_{L^2(K,\sigma_\nu)}\parallel F\parallel_{L^2(G,\tau_\nu)} ,
\end{split}\end{align*}
by the restriction Fourier theorem. Taking the supermum over all $F$ with $\parallel F\parallel_{L^2(G,\tau_\nu)}=1$, the corollary follows.
\end{proof}
\section{Asymptotic expansion for the Poisson transform}
In  this section we give an asymptotic expansion for the Poisson transform. We first start by establishing some intermediate results.\\
Let \(L^{2}_{\lambda}(K, \sigma_\nu)\) denote the finite linear span of the functions 
$$
f^g_{\lambda, v}:k\longmapsto f^g_{\lambda,v}(k)={\rm e}^{(i\lambda-\rho)H(g^{-1}k)}\tau_\nu^{-1}(\kappa(g^{-1}k))v, \quad g\in G, v\in V_\nu.
$$
\begin{lem}
For $\lambda \in \mathbb{R}\setminus \{0\}, \nu \in {\N}$  the space $ L^{2}_{\lambda}(K,\sigma_\nu)$ is a dense subspace of $L^{2}(K,\sigma_\nu)$.
\end{lem}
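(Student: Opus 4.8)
The plan is to show that the orthogonal complement of $L^2_\lambda(K,\sigma_\nu)$ in $L^2(K,\sigma_\nu)$ is trivial. So suppose $h\in L^2(K,\sigma_\nu)$ satisfies $\langle h,f^g_{\lambda,v}\rangle_{L^2(K,\sigma_\nu)}=0$ for all $g\in G$ and all $v\in V_\nu$. Unwinding the definition of $f^g_{\lambda,v}$ and using that $\tau_\nu$ is unitary, this says
\[
\int_K {\rm e}^{(-i\lambda-\rho)H(g^{-1}k)}\,\tau_\nu(\kappa(g^{-1}k))\,h(k)\,{\rm d}k = 0 \quad \text{in } V_\nu,\ \text{for all } g\in G,
\]
where I have pulled the conjugate inside; note the left-hand side is exactly $\mathcal{P}^\nu_{-\lambda}h(g)$ (up to the harmless sign of $\lambda$, which is irrelevant since $-\lambda\in\mathbb{R}\setminus\{0\}$ too). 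Thus $h\perp L^2_\lambda(K,\sigma_\nu)$ is equivalent to $\mathcal{P}^\nu_{-\lambda}h\equiv 0$.

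First I would record that $-\lambda\in\mathbb{R}\setminus\{0\}$ satisfies the hypotheses (i) and (ii) of Theorem \ref{O} (Olbrich): condition (i) $-2i(-\lambda)=2i\lambda\notin\mathbb{N}$ holds because $2i\lambda$ is purely imaginary and nonzero, while condition (ii) $i(-\lambda)+\rho\notin(-2\mathbb{N}-\nu)\cup(-2\mathbb{N}+\nu+2)$ holds because $-i\lambda+\rho$ has nonzero imaginary part whereas all elements of those two sets are real. Hence by Theorem \ref{O} the Poisson transform $\mathcal{P}^\nu_{-\lambda}$ is injective on $C^{-\omega}(K,\sigma_\nu)$, and in particular on $L^2(K,\sigma_\nu)\subset C^{-\omega}(K,\sigma_\nu)$. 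Therefore $\mathcal{P}^\nu_{-\lambda}h=0$ forces $h=0$, which proves that $L^2_\lambda(K,\sigma_\nu)^\perp=\{0\}$, i.e. $L^2_\lambda(K,\sigma_\nu)$ is dense in $L^2(K,\sigma_\nu)$.

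The one point needing a little care is the identification of $\langle h,f^g_{\lambda,v}\rangle$ with a component of $\mathcal{P}^\nu_{-\lambda}h(g)$: one should check the pairing conventions so that the vanishing of the scalar $\langle \mathcal{P}^\nu_{-\lambda}h(g),v\rangle_\nu$ for every $v\in V_\nu$ and every $g$ indeed gives $\mathcal{P}^\nu_{-\lambda}h(g)=0$ for every $g$, i.e. $\mathcal{P}^\nu_{-\lambda}h$ vanishes as an element of $C^\infty(G,\tau_\nu)$. This is immediate since the $v$'s range over all of $V_\nu$. I expect no real obstacle here; the only thing to be mindful of is the sign convention on $\lambda$ in the definition of $f^g_{\lambda,v}$ versus that in $\mathcal{P}^\nu_\lambda$, but since the argument only uses that the relevant parameter lies in $\mathbb{R}\setminus\{0\}$ and Olbrich's conditions are symmetric under $\lambda\mapsto-\lambda$ in this regime, it causes no difficulty. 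An alternative, if one prefers to avoid invoking Theorem \ref{O}, would be to use the injectivity of the principal series intertwining/Poisson map directly from the Helgason-type theory, but citing Olbrich is the cleanest route given what is already set up in Section 2.
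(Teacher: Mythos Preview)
Your proposal is correct and follows essentially the same approach as the paper, which simply states that density is a reformulation of the injectivity of the Poisson transform $\mathcal{P}^\nu_\lambda$; you have spelled out precisely this reduction via the orthogonal complement. One small remark: with the paper's sign convention the integral you obtain is actually $\langle \mathcal{P}^\nu_{\lambda}h(g),v\rangle_\nu$ rather than $\langle \mathcal{P}^\nu_{-\lambda}h(g),v\rangle_\nu$, but as you already note this is harmless since the argument only needs the parameter to lie in $\mathbb{R}\setminus\{0\}$.
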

\begin{proof} As $\lambda\in \mathbb{R}\setminus\{0\}$, the density is just a reformulation of the injectivity  of the Poisson transform $\mathcal{P}_{\nu,\lambda}$.
\end{proof}
\begin{lem}\label{dense}
Let $\lambda \in \R\setminus\{0\}, \nu \in \mathbb{N}$. Then  there exists a unique unitary isomorphism $U^\nu_\lambda$ on $L^{2}(K,\sigma_\nu)$ such that :
\begin{equation*}  
U^\nu_\lambda\,f^g_{\lambda,v}= f^g_{-\lambda,v}, \quad g\in G.
\end{equation*}
Moreover, for $f_1, f_2 \in L^{2}(K, \sigma_\nu)$, we have $\displaystyle\mathcal{P}^\nu_\lambda F_{1}=\mathcal{P}^\nu_{-\lambda}F_2$  if and only if $U^\nu_\lambda F_1=F_2$ ( i.e. $\displaystyle U^\nu_\lambda=(\mathcal{P}^\nu_{-\lambda})^{-1} \circ \mathcal{P}^\nu_\lambda$).
 \end{lem}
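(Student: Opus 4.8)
The plan is to build $U^\nu_\lambda$ by hand on the dense subspace $L^2_\lambda(K,\sigma_\nu)$ and then extend it by continuity, the key point being that the Gram matrices of the generating families $\{f^{g_i}_{\lambda,v_i}\}$ and $\{f^{g_i}_{-\lambda,v_i}\}$ coincide. The first step is to establish the identity
\[
\langle f^{g_1}_{\lambda,v_1},f^{g_2}_{\lambda,v_2}\rangle_{L^2(K,\sigma_\nu)}=\langle\Phi_{\nu,\lambda}(g_1^{-1}g_2)\,v_1,v_2\rangle_\nu ,\qquad g_1,g_2\in G,\ v_1,v_2\in V_\nu .
\]
Since $\lambda$ is real one has $\overline{{\rm e}^{(i\lambda-\rho)H(g_2^{-1}k)}}={\rm e}^{-(i\lambda+\rho)H(g_2^{-1}k)}$, and since $\tau_\nu$ is unitary $\langle\tau_\nu^{-1}(\kappa(g_1^{-1}k))v_1,\tau_\nu^{-1}(\kappa(g_2^{-1}k))v_2\rangle_\nu=\langle\tau_\nu(\kappa(g_2^{-1}k))\tau_\nu(\kappa(g_1^{-1}k)^{-1})v_1,v_2\rangle_\nu$; inserting these into the definition of the $L^2(K,\sigma_\nu)$-inner product and comparing with the Eisenstein-type representation \eqref{symm} of the translated $\tau_\nu$-spherical function (taking $x=g_1$, $y=g_2$ there) yields the identity.

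Next I would use the symmetry $\Phi_{\nu,\lambda}=\Phi_{\nu,-\lambda}$: being $\tau_\nu$-radial, $\Phi_{\nu,\lambda}$ is determined by its restriction to $A$, namely $\Phi_{\nu,\lambda}(a_t)=\varphi_{\nu,\lambda}(t)\,{\rm Id}_{V_\nu}$ with $\varphi_{\nu,\lambda}(t)=(\cosh t)^\nu\,{}_2F_1\!\big(\tfrac{i\lambda+\rho+\nu}{2},\tfrac{-i\lambda+\rho+\nu}{2};\rho-1;-\sinh^2 t\big)$, which is manifestly invariant under $\lambda\mapsto-\lambda$. Combined with the first step, for every finite family $(g_i,v_i)_i$ and scalars $(c_i)_i$ one obtains
\[
\Big\|\textstyle\sum_i c_i\,f^{g_i}_{-\lambda,v_i}\Big\|^2_{L^2(K,\sigma_\nu)}=\sum_{i,j}c_i\overline{c_j}\,\langle\Phi_{\nu,-\lambda}(g_i^{-1}g_j)v_i,v_j\rangle_\nu=\sum_{i,j}c_i\overline{c_j}\,\langle\Phi_{\nu,\lambda}(g_i^{-1}g_j)v_i,v_j\rangle_\nu=\Big\|\textstyle\sum_i c_i\,f^{g_i}_{\lambda,v_i}\Big\|^2_{L^2(K,\sigma_\nu)} .
\]
Hence $\sum_i c_i f^{g_i}_{\lambda,v_i}\mapsto\sum_i c_i f^{g_i}_{-\lambda,v_i}$ is a well-defined linear isometry of $L^2_\lambda(K,\sigma_\nu)$ onto $L^2_{-\lambda}(K,\sigma_\nu)$; since both of these are dense in $L^2(K,\sigma_\nu)$ (the preceding lemma, applied to $\lambda$ and to $-\lambda$), it extends uniquely to a unitary operator $U^\nu_\lambda$ of $L^2(K,\sigma_\nu)$ satisfying $U^\nu_\lambda f^g_{\lambda,v}=f^g_{-\lambda,v}$, and uniqueness of such an operator is forced by the same density.

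For the ``moreover'' part I would first record the duality relation $\langle\mathcal{P}^\nu_\lambda f(g),v\rangle_\nu=\langle f,f^g_{\lambda,v}\rangle_{L^2(K,\sigma_\nu)}$ (for $f\in L^2(K,\sigma_\nu)$, $g\in G$, $v\in V_\nu$), which again follows at once from the reality of $\lambda$ and the unitarity of $\tau_\nu$ by comparing the compact-picture formula for $\mathcal{P}^\nu_\lambda$ with the definition of $f^g_{\lambda,v}$. Then, for $F_1,F_2\in L^2(K,\sigma_\nu)$, the equality $\mathcal{P}^\nu_\lambda F_1=\mathcal{P}^\nu_{-\lambda}F_2$ is equivalent to $\langle F_1,f^g_{\lambda,v}\rangle=\langle F_2,f^g_{-\lambda,v}\rangle=\langle F_2,U^\nu_\lambda f^g_{\lambda,v}\rangle=\langle (U^\nu_\lambda)^{-1}F_2,f^g_{\lambda,v}\rangle$ for all $g\in G$, $v\in V_\nu$, hence (since the $f^g_{\lambda,v}$ span the dense subspace $L^2_\lambda(K,\sigma_\nu)$) to $F_1=(U^\nu_\lambda)^{-1}F_2$, i.e. $U^\nu_\lambda F_1=F_2$. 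In particular $\mathcal{P}^\nu_\lambda=\mathcal{P}^\nu_{-\lambda}\circ U^\nu_\lambda$; since $\mathcal{P}^\nu_{-\lambda}$ is injective (equivalently, $L^2_{-\lambda}(K,\sigma_\nu)$ is dense, the first lemma of this section) and $U^\nu_\lambda$ is a bijection of $L^2(K,\sigma_\nu)$, the two Poisson transforms have the same range and $U^\nu_\lambda=(\mathcal{P}^\nu_{-\lambda})^{-1}\circ\mathcal{P}^\nu_\lambda$. The step I expect to be the main obstacle is the first one: identifying the $L^2(K,\sigma_\nu)$-inner product of the two elementary Poisson kernels $f^{g_1}_{\lambda,v_1}$ and $f^{g_2}_{\lambda,v_2}$ with $\Phi_{\nu,\lambda}(g_1^{-1}g_2)$, which requires careful bookkeeping with the Iwasawa cocycle relations and with the unitarity identity $\tau_\nu(\kappa)^{\ast}=\tau_\nu(\kappa)^{-1}$; once that identity and the evenness $\varphi_{\nu,\lambda}=\varphi_{\nu,-\lambda}$ in $\lambda$ are in hand, the rest reduces to the density of $L^2_\lambda(K,\sigma_\nu)$.
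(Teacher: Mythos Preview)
Your proof is correct and follows exactly the standard argument that the paper invokes (the paper itself omits the proof, referring to \cite{Ka} and to Lemma~5.2 in \cite{BIA}): one computes $\langle f^{g_1}_{\lambda,v_1},f^{g_2}_{\lambda,v_2}\rangle=\langle\Phi_{\nu,\lambda}(g_1^{-1}g_2)v_1,v_2\rangle_\nu$ via \eqref{symm}, uses the evenness $\Phi_{\nu,\lambda}=\Phi_{\nu,-\lambda}$ to get a well-defined isometry on $L^2_\lambda(K,\sigma_\nu)$, and extends by density. Your handling of the ``moreover'' part via the duality relation $\langle\mathcal{P}^\nu_\lambda f(g),v\rangle_\nu=\langle f,f^g_{\lambda,v}\rangle$ is likewise the standard one, and the step you flag as the main obstacle is indeed just the bookkeeping with \eqref{symm}, which you carry out correctly.
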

\begin{proof} 
The proof is the same as in \cite{Ka} (see also Lemma 5.2 in\cite{BIA}) so we omit it.
\end{proof}
We now introduce the function space \(B^*(G,\tau_\nu)\) on G, consisting of functions $F$ in $ L^2_{loc}(G,\tau_{\nu})$ satisfying
\begin{align*}
\parallel F\parallel_{B^*(G,\tau_\nu)}=\sup_{j\in \N}[2^{-\frac{j}{2}}\int_{A_j}\parallel F(g)\parallel_\nu^2\, {\rm d}g_K]<\infty,
\end{align*}
where \(\displaystyle A_{0}=\{g\in G; d(0,g.0)<1\}\,  \textit{and}\, \,  \displaystyle A_{j}=\{g \in G; 2^{j-1}\leq d(0,g.0)< 2^{j}\}\),  for  \(j\geq 1\).\\
One could easily show that $
\parallel F\parallel_{B^*(G,\tau_\nu)}\leq \parallel F\parallel_\ast \leq 2\parallel F\parallel_{B^*(G,\tau_\nu)}$.\\ 
We define an equivalent relation on \(B^*(G,\tau_\nu)\). For $F_1, F_2 \in B^*(G,\tau_\nu)$ we write $F_1\simeq F_2$ if 
$$
\lim_{R\rightarrow +\infty}\frac{1}{R}\int_{B(R)}\parallel  F_1(g)-F_2(g)\parallel_\nu^2\, {\rm d}g_K =0.
$$
Note that  by using the polar  decomposition we see that $F_1\simeq F_2$ if 
\begin{align*}
\lim_{R\rightarrow +\infty}\frac{1}{R}\int_0^R\int_K\parallel F_1(k{\rm e}^{tH)})-F_2(k{\rm e}^{tH)})\parallel_\nu^2 \,{\rm d}k\,\Delta(t){\rm d}t\ =0.
\end{align*}
We now state the main result of this section
\begin{theo}\label{asympt} Let $\nu \in {\N}, \lambda \in \mathbb{R}\setminus\{0\}$. For $f\in L^{2}(K,\sigma_\nu)$ we have the following asymptotic expansions for the Poisson transform in $B^*(G,\tau_\nu)$
\begin{align}\label{asympt1}
\hspace{1,5cm} \mathcal{P}^\nu_{\lambda}f(x)\simeq \tau_\nu^{-1}(k_2(x))    [\mathbf{c}_\nu(\lambda){\rm e}^{(i\lambda-\rho)(A^+(x)}f(k_1(x))+\mathbf{c}_\nu(-\lambda)e^{(-i\lambda-\rho)(A^+(x))}U^\nu_\lambda f(k_1(x))],
\end{align}
where \(x=k_1(x){\rm e}^{A^+(x)}k_2(x)\).
\end{theo}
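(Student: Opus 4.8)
The plan is to prove \eqref{asympt1} first for $f$ in the dense subspace $L^2_\lambda(K,\sigma_\nu)$ and then to pass to an arbitrary $f\in L^2(K,\sigma_\nu)$ by a density argument based on the uniform estimate of Corollary \ref{nc}. Throughout I write $\mathcal{A}_\lambda f$ for the right-hand side of \eqref{asympt1}, viewed as a $V_\nu$-valued function on $G$; since $\tau_\nu$ is unitary, $\mathcal{A}_\lambda f$ is right $K$-covariant of type $\tau_\nu$, and along the polar direction $x=ka_t$ (where $k_1(x)=k$, $A^+(x)=t$, $k_2(x)=e$) it reduces to $\mathcal{A}_\lambda f(ka_t)=\mathbf{c}_\nu(\lambda){\rm e}^{(i\lambda-\rho)t}f(k)+\mathbf{c}_\nu(-\lambda){\rm e}^{(-i\lambda-\rho)t}U^\nu_\lambda f(k)$.

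For the generating functions, fix $h\in G$ and $v\in V_\nu$ and look at $f^h_{\lambda,v}\in L^2_\lambda(K,\sigma_\nu)$. Comparing the integral defining $\mathcal{P}^\nu_\lambda f^h_{\lambda,v}(g)$ with the Eisenstein-type formula \eqref{symm} for the translated $\tau_\nu$-spherical function yields the identity $\mathcal{P}^\nu_\lambda f^h_{\lambda,v}(g)=\Phi_{\nu,\lambda}(h^{-1}g)\,v$, so the required asymptotics is precisely that of the translate $\Phi_{\nu,\lambda}(h^{-1}\cdot)$ as $A^+(\cdot)\to+\infty$; this is the content of the Key Lemma, whose proof is carried out in Section 7. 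I would use it in the form: for fixed $h$, uniformly in $k\in K$,
\[
\Phi_{\nu,\lambda}(h^{-1}ka_t)=\left[\mathbf{c}_\nu(\lambda){\rm e}^{(i\lambda-\rho)(t+H(h^{-1}k))}+\mathbf{c}_\nu(-\lambda){\rm e}^{(-i\lambda-\rho)(t+H(h^{-1}k))}\right]\tau_\nu^{-1}(\kappa(h^{-1}k))+r_h(t,k),
\]
with $\|r_h(t,k)\|_\nu={\rm e}^{-\rho t}\,o(1)$ as $t\to+\infty$. Since $f^h_{\lambda,v}(k)={\rm e}^{(i\lambda-\rho)H(h^{-1}k)}\tau_\nu^{-1}(\kappa(h^{-1}k))v$ and $U^\nu_\lambda f^h_{\lambda,v}=f^h_{-\lambda,v}$ by Lemma \ref{dense}, the bracketed main term applied to $v$ is exactly $\mathcal{A}_\lambda f^h_{\lambda,v}(ka_t)$. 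To turn this pointwise statement into $\mathcal{P}^\nu_\lambda f^h_{\lambda,v}\simeq\mathcal{A}_\lambda f^h_{\lambda,v}$ I would use the polar integral formula \eqref{integ} together with the elementary observation that, as $\rho=2n+1$ and $\Delta(t)=(2\sinh t)^{4n-1}(2\cosh t)^3$, the function ${\rm e}^{-2\rho t}\Delta(t)$ is bounded on $[0,+\infty)$; hence
\[
\frac1R\int_{B(R)}\big\|\Phi_{\nu,\lambda}(h^{-1}g)v-\mathcal{A}_\lambda f^h_{\lambda,v}(g)\big\|_\nu^2\,{\rm d}g_K=\frac1R\int_0^R\!\!\int_K\|r_h(t,k)\|_\nu^2\,\Delta(t)\,{\rm d}k\,{\rm d}t\le\frac{C}{R}\int_0^R o(1)\,{\rm d}t\xrightarrow[R\to\infty]{}0,
\]
and by linearity \eqref{asympt1} holds for every $f\in L^2_\lambda(K,\sigma_\nu)$.

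For the density step I would first record that $\mathcal{A}_\lambda$ is bounded for $\|\cdot\|_\ast$: using the unitarity of $\tau_\nu$ and of $U^\nu_\lambda$, the identity $|\mathbf{c}_\nu(-\lambda)|=|\mathbf{c}_\nu(\lambda)|$, the boundedness of ${\rm e}^{-2\rho t}\Delta(t)$ and \eqref{integ}, one obtains $\|\mathcal{A}_\lambda f\|_\ast\le C\,|\mathbf{c}_\nu(\lambda)|\,\|f\|_{L^2(K,\sigma_\nu)}$; in particular $\mathcal{A}_\lambda f\in B^*(G,\tau_\nu)$. Then, given $f\in L^2(K,\sigma_\nu)$, choose $f_j\in L^2_\lambda(K,\sigma_\nu)$ with $f_j\to f$, write $\mathcal{P}^\nu_\lambda f-\mathcal{A}_\lambda f=\mathcal{P}^\nu_\lambda(f-f_j)+(\mathcal{P}^\nu_\lambda f_j-\mathcal{A}_\lambda f_j)+\mathcal{A}_\lambda(f_j-f)$, and use $(a+b+c)^2\le3(a^2+b^2+c^2)$, the inequality $\frac1R\int_{B(R)}\|G\|_\nu^2\,{\rm d}g_K\le\|G\|_\ast^2$ and the fact that the middle term is $\simeq0$ by the previous step, to get
\[
\limsup_{R\to+\infty}\frac1R\int_{B(R)}\big\|\mathcal{P}^\nu_\lambda f-\mathcal{A}_\lambda f\big\|_\nu^2\,{\rm d}g_K\le 3\|\mathcal{P}^\nu_\lambda(f-f_j)\|_\ast^2+3\|\mathcal{A}_\lambda(f_j-f)\|_\ast^2.
\]
By Corollary \ref{nc} and the bound on $\mathcal{A}_\lambda$, the right-hand side is $\le C|\mathbf{c}_\nu(\lambda)|^2\|f-f_j\|_{L^2(K,\sigma_\nu)}^2$, which tends to $0$ as $j\to\infty$; since the left-hand side does not depend on $j$, it is $0$, i.e. \eqref{asympt1} holds in general.

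The hard part is the Key Lemma itself: one has to extract from the Eisenstein integral for $\Phi_{\nu,\lambda}(h^{-1}\cdot)$ the two leading exponentials with coefficients $\mathbf{c}_\nu(\pm\lambda)$ and a remainder that is $o({\rm e}^{-\rho A^+})$ \emph{uniformly} in the compact variable $k\in K$. This rests on controlling the Cartan decomposition of $h^{-1}ka_t$ as $t\to+\infty$ (which reduces to the uniform contraction $a_{-t}Na_t\to\{e\}$), inserting it into the explicit expression $\varphi_{\nu,\lambda}(t)=(\cosh t)^\nu\,\phi_\lambda^{(\rho-2,\nu+1)}(t)$ for the trace $\tau_\nu$-spherical function and invoking the asymptotics of the Jacobi function; this is done in Section 7. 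Granting it, everything above is routine — the only mildly delicate point in the present section being the interchange of the pointwise asymptotics with the averaging $\frac1R\int_{B(R)}$, which is exactly what the boundedness of ${\rm e}^{-2\rho t}\Delta(t)$ makes possible.
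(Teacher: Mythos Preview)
Your proposal is correct and follows essentially the same route as the paper: reduce to the generators $f^g_{\lambda,v}$ via density, identify $\mathcal{P}^\nu_\lambda f^g_{\lambda,v}$ with the translated spherical function $\Phi_{\nu,\lambda}(g^{-1}\cdot)v$, and invoke the Key Lemma, while the paper simply records the continuity of both sides in $f$ (referring to \cite{BIA}) where you write out the explicit $\varepsilon$-argument using Corollary~\ref{nc} and the bound on $\mathcal{A}_\lambda$. The only cosmetic difference is that you phrase the Key Lemma as a pointwise uniform-in-$k$ estimate $\|r_h(t,k)\|_\nu={\rm e}^{-\rho t}o(1)$ and then integrate, whereas the paper states and uses the Key Lemma directly as a $B^\ast$-equivalence; since the Section~7 analysis in fact yields pointwise decay of order ${\rm e}^{-(\rho+2)t}$ for the remainder pieces, your reformulation is justified but the extra conversion step is not needed.
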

Most of the proof of the above theorem consists in proving the following Key Lemma, giving the asymptotic expansion for the translates of the $\tau_\nu$-spherical function.\\

\textbf{KEY LEMMA.}
For $\lambda\in \mathbb{R}\setminus\{0\}, g\in G$ and $v\in V_\nu$, we have the following asymptotic expansion in $B^*(G,\tau_\nu)$
\begin{align*}
\Phi_{\nu,\lambda}(g^{-1}x).\,v\simeq \tau_\nu^{-1}(k_2(x))\sum_{s\in \{\pm 1\}}\mathbf{c}_\nu(s\lambda){\rm e}^{(is\lambda-\rho)A^+(x)}f^g_{s\lambda,v}(k_1(x)),
\end{align*}
$x=k_1(x){\rm e}^{A^+(x)}k_2(x)$.\\

\textbf{Proof of Theorem \ref{asympt}.}
We first note that both side of \eqref{asympt1} depend continuously on $f\in L^2(K,\sigma_\nu)$. This can be proved in the same manner as in \cite{BIA}. Therefore we only have to prove  that the asymptotic expansion \eqref{asympt1} holds for $f\in L_\lambda^2(K,\sigma_\nu)$. Let $\displaystyle f= f_{\lambda,v}^g$. Then according to [\cite{com}, Proposition 3.3], we have
\begin{align*}
\mathcal{P}^\nu_\lambda f(x)=\Phi_{\nu,\lambda}(g^{-1}x)v.
\end{align*}
The theorem follows from the Key lemma.\\
As a consequence of Theorem \ref{asympt} we obtain the following result giving the behaviour of the Poisson integrals.
\begin{pro}
\begin{enumerate}
\item For any \(f\in L^2(K,\sigma_\nu)\) we have the Plancherel-Poisson formula
\begin{align}\label{E2}
\lim_{R\rightarrow +\infty}\frac{1}{R}\int_{B(R)}\parallel  \mathcal{P}_\lambda^\nu f(g)\parallel_\nu^2\,{\rm d}g_K=2\mid \mathbf{c}_\nu(\lambda)\mid^2\, \parallel f\parallel^2_{L^2(K,\sigma_\nu)}
\end{align}
\item Let \( \nu\in {\N}\). There exists a positive constant \(C_{\nu}\) such that for any \(\lambda\in {\R}\setminus\{0\}\),  we have
\begin{align}\label{E5}
C_{\nu}^{-1}\mid \mathbf{c}_\nu(\lambda)\mid \,  \parallel f\parallel_{L^2(K,\sigma_\nu)}\leq \parallel \mathcal{P}_\nu^\lambda f\parallel_\ast\leq C_{\nu}\mid \mathbf{c}_\nu(\lambda)\mid\,  \parallel f\parallel_{L^2(K,\sigma_\nu)},
\end{align}
for every \(f\in L^2(K,\sigma_\nu)\).
\end{enumerate}
\end{pro}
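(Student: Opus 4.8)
The plan is to derive both assertions directly from the asymptotic expansion of Theorem \ref{asympt} together with the equivalence relation $\simeq$ on $B^*(G,\tau_\nu)$. First I would fix $f\in L^2(K,\sigma_\nu)$ and $\lambda\in\mathbb{R}\setminus\{0\}$, and apply Theorem \ref{asympt} to write
\[
\mathcal{P}^\nu_\lambda f(x)\simeq \tau_\nu^{-1}(k_2(x))\bigl[\mathbf{c}_\nu(\lambda){\rm e}^{(i\lambda-\rho)A^+(x)}f(k_1(x))+\mathbf{c}_\nu(-\lambda){\rm e}^{(-i\lambda-\rho)A^+(x)}U^\nu_\lambda f(k_1(x))\bigr].
\]
Since $\|\cdot\|_\nu$ is $\tau_\nu$-invariant and $U^\nu_\lambda$ is unitary, I would use the polar integration formula \eqref{integ} to compute, for the right-hand side, the quantity $\frac{1}{R}\int_0^R\int_K\|\cdot\|_\nu^2\,{\rm d}k\,\Delta(t)\,{\rm d}t$. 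Expanding the square produces three terms: the two diagonal terms, each carrying a factor ${\rm e}^{-2\rho t}\Delta(t)$, and a cross term carrying ${\rm e}^{\pm 2i\lambda t}{\rm e}^{-2\rho t}\Delta(t)$.

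The key computational input is the elementary asymptotic $\frac{1}{R}\int_0^R {\rm e}^{-2\rho t}\Delta(t)\,{\rm d}t\to 1$ as $R\to\infty$ — which follows since $\Delta(t)=(2\sinh t)^{4n-1}(2\cosh t)^3\sim 2^{-(4n+2)}{\rm e}^{(4n+2)t}=2^{-2\rho}{\rm e}^{2\rho t}$ as $t\to\infty$, with the normalization of ${\rm d}g_K$ in \eqref{integ} absorbing the constant (this is exactly what makes the constant $2$ appear rather than some other value). For the cross term, the oscillating factor ${\rm e}^{\pm 2i\lambda t}$ together with $\lambda\neq 0$ forces $\frac{1}{R}\int_0^R {\rm e}^{\pm 2i\lambda t}{\rm e}^{-2\rho t}\Delta(t)\,{\rm d}t\to 0$ by a Riemann–Lebesgue/van der Corput type argument (the density tends to a constant, and a constant times an oscillating exponential averages to zero). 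Hence the cross term drops out and the two diagonal terms each contribute $|\mathbf{c}_\nu(\lambda)|^2\|f\|^2_{L^2(K,\sigma_\nu)}$, giving the limit $2|\mathbf{c}_\nu(\lambda)|^2\|f\|^2_{L^2(K,\sigma_\nu)}$. Because $F_1\simeq F_2$ means precisely that the normalized ball-averages of $\|F_1-F_2\|^2_\nu$ vanish, a standard $\varepsilon$-argument (or the triangle inequality in the seminorm $M$) transfers this limit from the right-hand side to $\mathcal{P}^\nu_\lambda f$ itself, proving \eqref{E2}.

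For part (2), the double inequality \eqref{E5} follows by combining two ingredients. The upper bound is already Corollary \ref{nc}. For the lower bound, I would observe that \eqref{E2} gives $\limsup_{R\to\infty}\frac{1}{R}\int_{B(R)}\|\mathcal{P}^\nu_\lambda f\|_\nu^2\,{\rm d}g_K=2|\mathbf{c}_\nu(\lambda)|^2\|f\|^2_{L^2(K,\sigma_\nu)}$, and since the supremum over $R>1$ dominates the $\limsup$, we get $\|\mathcal{P}^\nu_\lambda f\|_\ast^2\geq 2|\mathbf{c}_\nu(\lambda)|^2\|f\|^2_{L^2(K,\sigma_\nu)}$, which yields the left inequality with $C_\nu^{-1}=\sqrt2$ (any constant $\le\sqrt2$ works). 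The crucial point is that the constant $C_\nu$ can be taken independent of $\lambda$: this is automatic on the lower side from the exact formula \eqref{E2}, and on the upper side it is guaranteed by Corollary \ref{nc}, whose constant is already $\lambda$-independent.

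The main obstacle I anticipate is the rigorous justification that one may pass from the asymptotic relation $\mathcal{P}^\nu_\lambda f\simeq (\text{leading term})$ in $B^*(G,\tau_\nu)$ to the statement about the genuine limit of the normalized $L^2$-averages — i.e. controlling the error term uniformly enough, and handling the cross-term oscillation integral carefully (making sure the ``$\circ(1)$'' in the spherical-function asymptotics, which propagates through the Key Lemma, does not interfere with the averaging). Once the relation $\simeq$ is in hand, everything else is a short computation with $\Delta(t)$ and the unitarity of $U^\nu_\lambda$.
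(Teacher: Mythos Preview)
Your proposal is correct and follows essentially the same route as the paper: define the leading term $S^\nu_\lambda f$ from Theorem \ref{asympt}, compute $\lim_{R\to\infty}\frac{1}{R}\int_{B(R)}\|S^\nu_\lambda f\|_\nu^2\,{\rm d}g_K$ via the unitarity of $U^\nu_\lambda$ and the two integral limits $\frac{1}{R}\int_0^R e^{-2\rho t}\Delta(t)\,{\rm d}t\to 1$ and $\frac{1}{R}\int_0^R e^{2(i\lambda-\rho)t}\Delta(t)\,{\rm d}t\to 0$, then transfer the limit to $\mathcal{P}^\nu_\lambda f$ using $\mathcal{P}^\nu_\lambda f\simeq S^\nu_\lambda f$ and Cauchy--Schwarz; the upper bound in \eqref{E5} is Corollary \ref{nc} and the lower bound follows from \eqref{E2}. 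One small slip: since $2\sinh t\sim e^t$ and $2\cosh t\sim e^t$, one has $\Delta(t)\sim e^{2\rho t}$ directly (no factor $2^{-2\rho}$), so $e^{-2\rho t}\Delta(t)\to 1$ without any renormalization.
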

\begin{proof}
\begin{enumerate}
\item  We define for \(f\in L^2(K,\sigma_\nu)\)
\begin{align*}
S^\nu_\lambda f(x):=\tau_\nu^{-1}(k_2(x))    [\mathbf{c}_\nu(\lambda){\rm e}^{(i\lambda-\rho)(A^+(x)}f(k_1(x))+\mathbf{c}_\nu(-\lambda)e^{(-i\lambda-\rho)(A^+(x))}U^\nu_\lambda f(k_1(x))],
\end{align*}
$x=k_1(x){\rm e}^{A^+(x)}k_2(x)$.\\ By the unitarity of $U_\lambda$, we have 
\begin{align*}\begin{split}
\dfrac{1}{R} \int_{B(R)}\|S^\nu_\lambda f(g)\|^{2}dg_K
 &= 2|\mathbf{c}_\nu(\lambda)|^{2}\|f\|^2_{L^{2}(K,\tau_{\nu})}\left(\frac{1}{R}\int_0^R {\rm e}^{-2\rho t}\Delta(t){\rm d}t\right)\\
 & + 2|\mathbf{c}_\nu(\lambda)|^{2}\Re \left(<f,U_\lambda f>_{L^2(K,\sigma_\nu)}\frac{1}{R}\int_{0}^{R}e^{2(i\lambda-\rho)t}\Delta(t)dt\right).
\end{split}\end{align*}
From $\displaystyle \lim_{R\rightarrow +\infty}\frac{1}{R}\int_0^R {\rm e}^{-2\rho t}\Delta(t){\rm d}t=1$, and $\displaystyle\lim_{R\rightarrow +\infty}\frac{1}{R}\int_0^R e^{2(i\lambda-\rho)t}
\Delta(t){\rm d}t=0$, we deduce that
\begin{align}\label{E4}
\lim_{R\rightarrow +\infty}\frac{1}{R}\int_{B(R)}\parallel S^\nu_\lambda f(g)\parallel_\nu^2{\rm d}g_K=2\mid \mathbf{c}_\nu(\lambda)\mid^2\parallel f\parallel_{L^2(K,\sigma_\nu)}^2.
\end{align}
Next write
\begin{align*} \frac{1}{R} \int_{B(R)} \parallel\mathcal{P}_\lambda^\nu f(g)\parallel_\nu^2 dg_K &=\frac{1}{R} \int_{B(R)}( \parallel S^\nu_\lambda f(g)\parallel_\nu^2+\parallel\mathcal{P}_\lambda^\nu f(g)-S^\nu_\lambda f(g)\parallel_\nu^2\\
&+  2Re [< \mathcal{P}^\nu_\lambda f(g)-S^\nu_\lambda f(g),S^\nu_\lambda f(g)>])  dg_K.
\end{align*}
The estimate \eqref{E2} then follows from  \eqref{E4}, Theorem \ref{asympt} and the Cauchy-Schwarz inequality.
\item The right hand side of the estimate (\ref{E5}) has already been proved, see corollary \ref{nc}.\\The left hand side of the estimate (\ref{E5}) obviously follows from the estimate (\ref{E2}). This finishes the proof of the proposition.
\end{enumerate}
\begin{rem}
Let \(f_1,f_2\in L^2(K,\sigma_\nu)\). Then using the polarization identity as well as  the estimate (\ref{E2}), we get
\begin{align}\label{E6}
\lim_{R\rightarrow +\infty}\frac{1}{R}\int_{B(R)}<\mathcal{P}_\lambda^\nu f_1(g),\mathcal{P}_\lambda^\nu f_2(g)>_\nu{\rm d}g_K=2\mid \mathbf{c}_\nu(\lambda)\mid^2<f_1,f_2>_{L^2(K, \sigma_\nu)}
\end{align}
\end{rem}
\end{proof}
\section{Proof of the main results}
In this section we shall prove  Theorem \ref{range} on  the $L^2$-range of the vector Poisson transform  and Theorem \ref{spectre} characterizing the image $\mathcal{Q}_\lambda^\nu(L^2(G,\tau_\nu)$.
\subsection{The $L^2$-range of the Poisson transform}
We  first recall some results of harmonic analysis on the homogeneous vector bundle \(K\times_M V_\nu\) associated to the representation \(\sigma_\nu\) of \(M\).\\  
Let \(\widehat{K}\) be the unitary dual of \(K\). For \(\delta\in \widehat{K}\) let \(V_\delta\) denote a representation space of \(\delta\) with $d_\delta=\dim V_\delta$. We denote by  $\widehat{K}(\sigma_\nu)$  the set of $\delta\in \widehat{K}$ such that $\sigma_\nu$ occurs in $\delta\mid_M$ with multiplicity $m_\delta>0$.\\ 
 The decomposition of $L^2(K,\sigma_\nu)$ under $K$ (the group $K$ acts by left translations on this space) is given by the Frobenius reciprocity law 
$$
 L^2(K,\sigma_\nu)=\bigoplus_{\delta\in \widehat{K}(\sigma_\nu)} V_\delta\otimes Hom_M(V_\nu,V_\delta),
$$
where $v\otimes L$, for $v\in V_\delta, L\in Hom_M(V_\nu,V_\delta)$ is identified with the function $(v\otimes L)(k)=L^\ast(\delta(k^{-1})v)$, where $L^\ast$ denotes the adjoint of $L$.\\
For each \(\delta\in \widehat{K}(\sigma_\nu)\) 
let \((L_j)_{j=1}^{m_\delta}\) be an orthonormal basis of \(Hom_M(V_\nu,V_\delta)\) with respect to the inner product\\ \(\displaystyle <L_1,L_2>=\frac{1}{\nu+1}Tr(L_1L_2^\ast)\).\\
Let \(\{v_1,\cdots,v_{d_\delta}\}\) be an orhonormal basis of \(V_\delta\). Then 
$$
f_{ij}^\delta:k\rightarrow \sqrt{\frac{d_\delta}{\nu+1}} L_i^\ast\delta(k^{-1})v_j, \quad 1\leq i\leq m_\delta, \quad 1\leq j \leq d_\delta, \quad  \delta\in \widehat{K}(\sigma)
$$
form an orthonormal basis of \(L^2(K,\sigma_\nu)\).\\
For \(f\in L^2(K,\sigma_\nu)\) we have the Fourier series expansion \(\displaystyle f(k)=\sum_{\delta\in \widehat{K}(\sigma)}\sum_{i=1}^{m_\delta}\sum_{j=1}^{d_\delta}a^{\delta}_{ij}f_{ij}^\delta(k)\) with 
$$ \displaystyle \parallel f\parallel_{L^2(K,\sigma)}^2=\sum_{\delta\in \widehat{K}(\sigma)}\sum_{i=1}^{m_\delta}\sum_{j=1}^{d_\delta}\mid a^{\delta}_{ij}\mid^2.
$$
We define for \(\delta\in\widehat{K}(\sigma)\) and \(\lambda\in {\C}\), the generalized Eisenstein integral
\begin{equation*}
\Phi^L_{\lambda,\delta}(g)=\int_K {\rm e}^{-(i\lambda+\rho)H(g^{-1}k)}\tau_\nu(\kappa(g^{-1}k))L^\ast\delta(k^{-1}){\rm d}k, \quad L\in Hom_M(V_\nu,V_\delta).
\end{equation*}
It is easy to see that  \(\Phi^L_{\lambda,\delta}\) satisfies the following identity
\begin{equation*}
\Phi^L_{\lambda,\delta}(k_1gk_2)=\tau_\nu(k_2^{-1})\Phi^L_{\lambda,\delta}(g)\delta(k_1^{-1}), \quad k_1,k_2\in K,\, g\in G.
\end{equation*}
We now prove  an asymptotic  estimate for the generalized Eisenstein integrals.
\begin{pro}
Let \(\nu\in {\N}, \lambda\in {\R}\setminus\{0\}\). Then for \(\delta\in \widehat{K}(\sigma_\nu), T, S\in Hom_M(V_\nu,V_\delta)\) we have
\begin{align}\label{asympt2}
\lim_{R\rightarrow +\infty}\frac{1}{R}\int_{B(R)} \textit{Tr}\left(\Phi_{\lambda,\delta}^T(g)^\ast \Phi_{\lambda,\delta}^S(g)\right){\rm d}g_K=2\mid \mathbf{c}_\nu(\lambda)\mid^2 \textit{Tr}(TS^\ast).
\end{align}
\end{pro}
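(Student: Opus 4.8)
The plan is to view the generalized Eisenstein integral $\Phi^L_{\lambda,\delta}$ as a $V_\delta$-indexed family of Poisson transforms and then reduce the assertion to the polarized Plancherel--Poisson formula \eqref{E6}. Fix $\delta\in\widehat{K}(\sigma_\nu)$, $L\in Hom_M(V_\nu,V_\delta)$ and $v\in V_\delta$, and set $f_{L,v}(k)=L^\ast\delta(k^{-1})v$. First I would check that $f_{L,v}\in L^2(K,\sigma_\nu)$: since $L\in Hom_M(V_\nu,V_\delta)$ satisfies $\delta(m)L=L\sigma_\nu(m)$ for $m\in M$, unitarity of $\delta$ gives $L^\ast\delta(m^{-1})=\sigma_\nu(m)^{-1}L^\ast$, hence $f_{L,v}(km)=\sigma_\nu(m)^{-1}f_{L,v}(k)$, and $f_{L,v}$ is smooth on $K$. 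Comparing the compact-picture formula for $\mathcal{P}^\nu_\lambda$ with the definition of $\Phi^L_{\lambda,\delta}$ then yields the basic identity
\begin{align*}
\Phi^L_{\lambda,\delta}(g)\,v=\mathcal{P}^\nu_\lambda f_{L,v}(g),\qquad g\in G,\ v\in V_\delta.
\end{align*}

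Next I would fix an orthonormal basis $\{v_1,\dots,v_{d_\delta}\}$ of $V_\delta$ and expand the trace pointwise: for $g\in G$,
\begin{align*}
\textit{Tr}\bigl(\Phi^T_{\lambda,\delta}(g)^\ast\Phi^S_{\lambda,\delta}(g)\bigr)=\sum_{j=1}^{d_\delta}\langle\Phi^S_{\lambda,\delta}(g)v_j,\Phi^T_{\lambda,\delta}(g)v_j\rangle_\nu=\sum_{j=1}^{d_\delta}\langle\mathcal{P}^\nu_\lambda f_{S,v_j}(g),\mathcal{P}^\nu_\lambda f_{T,v_j}(g)\rangle_\nu.
\end{align*}
Because the sum over $j$ is finite, I may integrate over $B(R)$, divide by $R$ and pass to the limit $R\rightarrow+\infty$ term by term; applying \eqref{E6} to each pair $(f_{S,v_j},f_{T,v_j})\in L^2(K,\sigma_\nu)\times L^2(K,\sigma_\nu)$ gives
\begin{align*}
\lim_{R\rightarrow+\infty}\frac1R\int_{B(R)}\textit{Tr}\bigl(\Phi^T_{\lambda,\delta}(g)^\ast\Phi^S_{\lambda,\delta}(g)\bigr)\,{\rm d}g_K=2\mid\mathbf{c}_\nu(\lambda)\mid^2\sum_{j=1}^{d_\delta}\langle f_{S,v_j},f_{T,v_j}\rangle_{L^2(K,\sigma_\nu)}.
\end{align*}

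It remains to identify the last sum with $\textit{Tr}(TS^\ast)$. For each fixed $k\in K$ the vectors $\delta(k^{-1})v_j$ form an orthonormal basis of $V_\delta$, so
\begin{align*}
\sum_{j=1}^{d_\delta}\langle f_{S,v_j}(k),f_{T,v_j}(k)\rangle_\nu=\sum_{j=1}^{d_\delta}\langle S^\ast\delta(k^{-1})v_j,T^\ast\delta(k^{-1})v_j\rangle_\nu=\textit{Tr}(TS^\ast),
\end{align*}
and integrating over $K$ (with $\int_K{\rm d}k=1$) gives $\sum_{j}\langle f_{S,v_j},f_{T,v_j}\rangle_{L^2(K,\sigma_\nu)}=\textit{Tr}(TS^\ast)$, which combined with the previous display proves \eqref{asympt2}. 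I do not anticipate a genuine obstacle here: the statement is essentially a matricial repackaging of the scalar identity \eqref{E6}, and the only delicate points are the verification that $f_{L,v}\in L^2(K,\sigma_\nu)$ and careful bookkeeping of adjoints and of the convention for $\langle\cdot,\cdot\rangle$ so that the composition appears precisely as $TS^\ast$ (rather than $S^\ast T$ or a conjugate).
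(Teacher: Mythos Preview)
Your proposal is correct and follows essentially the same route as the paper: expand the trace over an orthonormal basis of $V_\delta$, recognize each $\Phi^L_{\lambda,\delta}(g)v_j$ as the Poisson transform of $k\mapsto L^\ast\delta(k^{-1})v_j$, apply the polarized Plancherel--Poisson identity \eqref{E6} term by term, and then evaluate the resulting $L^2$ inner products. The only cosmetic difference is that the paper invokes Schur's Lemma for the final identification with $\textit{Tr}(TS^\ast)$, whereas you use directly that $\{\delta(k^{-1})v_j\}_j$ is an orthonormal basis of $V_\delta$; these amount to the same computation.
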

\begin{proof}
By definition we have
\begin{align*}
\lim_{R\rightarrow +\infty}\frac{1}{R}\int_{B(R)}\textit{Tr}\left(\Phi_{\lambda,\delta}^T(g)^\ast \Phi_{\lambda,\delta}^S(g)\right){\rm d}g_K =\sum_{j=1}^{d_\delta}\lim_{R\rightarrow +\infty}\frac{1}{R}\int_{B(R)}< \Phi_{\lambda,\delta}^S(g)v_j,\Phi_{\lambda,\delta}^T(g)v_j>_\nu {\rm d}g_K
\end{align*}
Noting that $\Phi_{\lambda,\delta}^T(g)v_j$ is the Poisson transform of the function $k\mapsto L^*\delta(k^{-1})v_j$ and using \eqref{E6}, we get
\begin{align*}
\lim_{R\rightarrow +\infty}\frac{1}{R}\int_{B(R)}\textit{Tr}\left(\Phi_{\lambda,\delta}^T(g)^\ast \Phi_{\lambda,\delta}^S(g)\right){\rm d}g_K=2\mid \mathbf{c}_\nu(\lambda)\mid^2 \sum_{j=1}^{d_\delta}\int_K <S^\ast\delta(k^{-1})v_j,T^\ast\delta(k^{-1})v_j>_\nu {\rm d}k.
\end{align*}
Hence  Schur Lemma lead us to conclude that 
$\displaystyle \lim_{R\rightarrow +\infty}\frac{1}{R}\int_{B(R)}\textit{Tr}\left(\Phi_{\lambda,\delta}^T(g)^\ast \Phi_{\lambda,\delta}^S(g)\right){\rm d}g_K = 2\mid \mathbf{c}_\nu(\lambda)\mid^2 \textit{Tr}(TS^\ast)$, and the proof is finished.
\end{proof}
\begin{rem}
Noting that 
$$
Tr(\left(\Phi_{\lambda,\delta}^T(g)^\ast \Phi_{\lambda,\delta}^S(g)\right)=Tr(\left(\Phi_{\lambda,\delta}^T(a_t)^\ast \Phi_{\lambda,\delta}^S(a_t)\right),\quad g=k_1\,a_t\,k_2,
$$
it follows from \eqref{asympt2} that 
\begin{align}\label{asympt3}
\lim_{R\rightarrow +\infty}\frac{1}{R}\int_0^R Tr\left(\Phi_{\lambda,\delta}^T(a_t)^\ast \Phi_{\lambda,\delta}^S(a_t)\right)\Delta(t){\rm d}t = 2\mid \mathbf{c}_\nu(\lambda)\mid^2 \textit{Tr}(TS^\ast).
\end{align}
\end{rem}
\textbf{Proof of Theorem \ref{range}.}\\
(i) The estimate \eqref{E5} implies that the Poisson transform $\mathcal{P}_\lambda^\nu$ maps $L^2(K,\sigma_\nu)$ into $\mathcal{E}_\lambda(G,\tau_\nu)$ and that the  estimate  \eqref{Poisson estimates} holds.\\
(ii) We  now prove that the Poisson transform maps $L^2(K,\sigma_\nu)$ onto $\mathcal{E}^2_\lambda(G,\tau_\nu)$. Let \(F\in \mathcal{E}^2_\lambda(G,\tau_\nu)\). Since $\lambda\in \mathbb{R}\setminus\{0\}$, we know  by Theorem \ref{O} that there exists a hyperfunction $f\in C^{-\omega}(K,\sigma_\nu)$ such that $F=\mathcal{P}_\lambda^\nu f$.\\
Let
$ 
\displaystyle f=\sum_{\delta\in \widehat{K}(\sigma)}\sum_{j=1}^{d_\delta}\sum_{i=1}^ {m_\delta} a^\delta_{i j}f_{i j}^\delta
$,  be the Fourier series expansion of $f$. Then we have
\begin{align*}
 F(g)=\sum_{\delta\in \widehat{K}(\sigma)}\sqrt{\frac{d_\delta}{\nu+1}}\sum_{j=1}^{d_\delta}\sum_{i=1}^ {m_\delta} a^\delta_{i j}\Phi_{\lambda,\delta}^{L_i}(g)v_j \quad \textit{in}\quad  C^\infty(G,V).
\end{align*}
By the Schur relations, we have
\begin{align*}
\int_K <\Phi_{\lambda,\delta}^{L_i}(ka_t)v_j, \Phi_{\lambda,\delta'}^{L_m}(ka_t)v_n>_\nu\, {\rm d}k=\left\lbrace
\begin{array}{ll}
0&\,  \text{if } \delta\nsim \delta'
\\
\frac{1}{d_\delta} Tr(\Phi_{\lambda,\delta^{'}}^{L_m}(a_t))^*\Phi_{\lambda,\delta}^{L_i}(a_t)<v_j,v_n>_\nu
\quad\quad\text{if } \quad \delta'=\delta
\end{array}\right.
\end{align*}
Therefore
\begin{align*}
\int_K \parallel F(ka_t)\parallel_\nu^2 {\rm d}k&=\frac{1}{\nu+1}\sum_{\delta\in \widehat{K}(\sigma)}\sum_{j=1}^{d_\delta}\sum_{1\leq i,j\leq m_\delta} a^\delta_{i j}\overline{a^\delta_{mj}}Tr [(\Phi_{\lambda,\delta}^{L_m}(a_t))^\ast\Phi_{\lambda,\delta}^{L_i}(a_t)]\\
&=\frac{1}{\nu+1}\sum_{\delta\in \widehat{K}(\sigma)}\sum_{j=1}^{d_\delta}Tr\left[\sum_{1\leq i,m\leq m_\delta} (a^\delta_{mj} \Phi_{\lambda,\delta}^{L_m}(a_t))^\ast (a^\delta_{i j}\Phi_{\lambda,\delta}^{L_i}(a_t)\right]\\
&=\frac{1}{\nu+1}\sum_{\delta\in \widehat{K}(\sigma)}\sum_{j=1}^{d_\delta}\parallel \sum_{i=1}^{m_\delta}a^\delta_{i j} \Phi_{\lambda,\delta}^{L_i}(a_t)\parallel^2_{HS},
\end{align*}
Let $\Lambda$ be a finite subset in $\widehat{K}(\sigma)$. Since  $\parallel F\parallel_\ast <\infty$, it follows that, for any $R>1$ we have
\begin{align*}
\infty>\parallel F\parallel^2_\ast\geq \frac{1}{\nu+1}\sum_{\delta\in\Lambda}\sum_{j=1}^{d_\delta}\frac{1}{R}\int_0^R\parallel \sum_{i=1}^{m_\delta}a^\delta_{i j} \Phi_{\lambda,\delta}^{L_i}(a_t)\parallel^2_{HS}\, \Delta(t)\, {\rm d}t
\end{align*}
By   \eqref{asympt3} we have
\begin{align*}
\lim_{R\rightarrow \infty}\frac{1}{R}\int_0^R\parallel \sum_{i=1}^{m_\delta}a^\delta_{i j} \Phi_{\lambda,\delta}^{L_i}(a_t)\parallel^2_{HS}\, \Delta(t)\, {\rm d}t &=\lim_{R\rightarrow \infty}\sum_{1\leq i,m\leq m_\delta}a^\delta_{i j}\overline{a^\delta_{m j}}\,\frac{1}{R}\int_0^R Tr [(\Phi_{\lambda,\delta}^{L_m}(a_t))^\ast\Phi_{\lambda,\delta}^{L_i}(a_t)]\, \Delta(t){\rm d}t\\
&=2\mid \mathbf{c}_\nu(\lambda)\mid^2\,\sum_{1\leq i,m\leq m_\delta} a^\delta_{i j}\overline{a^\delta_{m j}}Tr(L_i L_m^*)\\
&=2(\nu+1)\mid \mathbf{c}_\nu(\lambda)\mid^2\sum_{i=1}^{m_\delta}\mid a^\delta_{i j}\mid^2.
\end{align*}
Thus 
$
\displaystyle\infty>\parallel F\parallel^2_\ast\geq \mid \mathbf{c}_\nu(\lambda)\mid^2 \sum_{\delta\in\Lambda} \sum_{j=1}^{d_\delta} \sum_{i=1}^{m_\delta}\mid a^\delta_{i j}\mid^2
$.
Since $\Lambda$ is arbitrary, it follows that 
\begin{align*}
\mid \mathbf{c}_\nu(\lambda)\mid^2 \sum_{\delta\in \widehat{K}(\sigma)} \sum_{j=1}^{d_\delta} \sum_{i=1}^{m_\delta}\mid a^\delta_{i j}\mid^2\leq \parallel F\parallel^2_\ast.
\end{align*}
This shows that $f\in L^2(K,\sigma_\nu)$ with $\mid \mathbf{c}_\nu(\lambda)\mid \parallel f\parallel_{L^2(K,\sigma_\nu)}\leq \parallel \mathcal{P}^\nu_\lambda f\parallel_*$ and the proof of the theorem is completed.
\subsection{The $L^2$-range of the generalized spectral projections}
We now proceed to the poof of the second main result of this paper.\\
\textbf{Proof of Theorem \ref{spectre}.}\\
Let $F\in L^2_c(G,\tau_\nu)\cap C^\infty(G,\tau_\nu)$. It follows from the definition ( see  \eqref{specv})  that the operator $\mathcal{Q}^\nu_\lambda$ may be written as 
\begin{align}\label{link}
\mathcal{Q}^\nu_\lambda F(g)=\mid \mathbf{c}_\nu(\lambda)\mid^{-2}\mathcal{P}^\nu_\lambda(\mathcal{F}_\nu F(\lambda,.))(g).
\end{align}
Using Theorem \ref{range} we deduce that
\begin{align*}
\sup_{R>1}\frac{1}{R}\int_{B(R)}\parallel \mathcal{Q}^\nu_\lambda F(g)\parallel_\nu^2\, {\rm d}g_K\leq C_\nu\,\mid \mathbf{c}_\nu(\lambda)\mid^{-2}\int_K \parallel \mathcal{F}_\nu F(\lambda,k)\parallel_\nu^2\,{\rm d}k.
\end{align*}
The above inequality and the Plancherel formula \eqref{plancherel2} imply
\begin{align*}
\int_0^\infty (\sup_{R>1}\frac{1}{R}\int_{B(R)}\parallel \mathcal{Q}^\nu_\lambda F(g)\parallel_\nu^2\, {\rm d}g_K)\, {\rm d}\lambda &\leq C_\nu\int_0^\infty\int_K \parallel \mathcal{F}_\nu F(\lambda,k)\parallel_\nu^2 \mid \mathbf{c}_\nu(\lambda)\mid^{-2}\,{\rm d}k\, {\rm d}\lambda\\
&\leq C_\nu \parallel F\parallel^2_{L^2(G,\tau)}.
\end{align*}
This prove the right hand side of the inequality \eqref{spectral2}.\\
From \eqref{link} and \eqref{estimate00} we have 
\begin{align*}
\lim_{R\rightarrow \infty}\frac{1}{R}\int_{B(R)}\parallel \mathcal{Q}^\nu_\lambda F(g)\parallel_\nu^2\, {\rm d}g_K=2\mid \mathbf{c}_\nu(\lambda)\mid^{-2}\int_K\parallel\mathcal{F}_\nu F(\lambda,k)\parallel_\nu^2 {\rm d}k,
\end{align*} 
and since for all $R>1$
\begin{align*}
\frac{1}{R}\int_{B(R)}\parallel \mathcal{Q}^\nu_\lambda F(g)\parallel^2\, {\rm d}g_K\leq C_\nu\mid \mathbf{c}_\nu(\lambda)\mid^{-2}\int_K\parallel\mathcal{F}_\nu F(\lambda,k)\parallel^2 {\rm d}k, \quad \textit{a.e.} \, \,  \lambda\in (0,\infty),
\end{align*}
we may apply the Lebesgue's dominated convergence theorem to get
\begin{align*}
\lim_{R\rightarrow \infty}\int_0^\infty \left(\frac{1}{R}\int_{B(R)}\parallel  \mathcal{Q}^\nu_\lambda F(g)\parallel_\nu^2\, {\rm d}g_K\right){\rm d}\lambda= 2 \parallel F\parallel^2_{L^2(G,\tau_\nu)}.
\end{align*}
It follows from the above equality that $$
 C \parallel F\parallel^2_{L^2(G,\tau_\nu)}\leq \int_0^\infty(\sup_{R>1}\int_{B(R)}\parallel  \mathcal{Q}^\nu_\lambda F(x)\parallel^2\, {\rm d}x)\, {\rm d}\lambda.
 $$
This complete the proof of the inequality \eqref{spectral2}.\\
We now prove that $ \mathcal{Q}^\nu_\lambda $ maps $L^2_c(G,\tau_\nu)$ onto $\mathcal{E}^2_\lambda(G,\tau_\nu)$.
Let $F_\lambda\in \mathcal{E}^2_\lambda(G,\tau_\nu)$. Then we have
\begin{align*}
\sup_{R>1}\frac{1}{R}\int_{B(R)}\parallel F_\lambda(g)\parallel_\nu^2\, {\rm d}g_K<\infty, \quad \textit{for a.e.} \quad \lambda\in \,(0,\infty).
\end{align*}
By Theorem \ref{range}, there exists $f_\lambda\in L^2(K,\sigma_\nu)$ such that $F_\lambda(g)=\mid \mathbf{c}_\nu(\lambda)\mid^{-2}\mathcal{P}^\nu_\lambda f_\lambda(g)$ with
\begin{align*}
\sup_{R>1}\frac{1}{R}\int_{B(R)}\parallel F_\lambda(g)\parallel_\nu^2\, {\rm d}g_K\geq C_\nu^{-1}\mid \mathbf{c}_\nu(\lambda)\mid^{-2}\int_K \parallel f_\lambda(k)\parallel^2\,{\rm d}k
\end{align*} 
Integrating the both side of the above inequality over $(0,\infty)$, we get
\begin{align*}
\infty>\parallel F_\lambda\parallel_*^2\geq C_\nu^{-1}\int_O^\infty\int_K  \parallel f_\lambda(k)\parallel_\nu^2\,\mid \mathbf{c}_\nu(\lambda)\mid^{-2}{\rm d}k\, {\rm d}\lambda.
\end{align*}
It now follows from Theorem \ref{fourier}, that there exists $F\in L^2_c(G,\tau_\nu)$ such that $\mathcal{F}_\nu F(\lambda,k)= f_\lambda(k)$. \\Henceforth $F_\lambda(g)=\mid \mathbf{c}_\nu(\lambda)\mid^{-2}\mathcal{P}_\lambda^\nu (\mathcal{F}_\nu F(\lambda,.)(g)$. This finishes the proof of Theorem \ref{spectre}. 
\section{Proof of the Key Lemma}
In this section we prove the Key Lemma of this paper.
To this end we need to establish some auxiliary results. We first prove an asymptotic formula for the $\tau_\nu$-spherical function.
\begin{pro}\label{asymp3b}
Let $\lambda\in \mathbb{R}\setminus\{0\}$. For any $v\in V_\nu$ we have 
\begin{align}\label{asymp3}
\Phi_{\nu,\lambda}(g).\,v\simeq\sum_{s\in \{\pm 1\}}\mathbf{c}_\nu(s\lambda){\rm e}^{(is\lambda-\rho)A^+(g)}\tau_\nu^{-1}(\kappa_1(g)\kappa_2(g)).\,v,
\end{align}
$ g=\kappa_1(g){\rm e}^{A^+(g)}\kappa_2(g)$
\end{pro}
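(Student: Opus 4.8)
The plan is to reduce the statement to an asymptotic expansion for the scalar trace $\tau_\nu$-spherical function $\varphi_{\nu,\lambda}$ and then promote it to the full $\mathrm{End}(V_\nu)$-valued function $\Phi_{\nu,\lambda}$ using $K$-radiality. First I would recall that $\Phi_{\nu,\lambda}$ is $\tau_\nu$-radial, so by the Cartan decomposition $g=\kappa_1(g)\,{\rm e}^{A^+(g)}\kappa_2(g)$ one has $\Phi_{\nu,\lambda}(g)=\tau_\nu(\kappa_2(g)^{-1})\,\Phi_{\nu,\lambda}({\rm e}^{A^+(g)})\,\tau_\nu(\kappa_1(g)^{-1})$, and moreover $\Phi_{\nu,\lambda}(a_t)=\varphi_{\nu,\lambda}(t)\,\mathrm{Id}_{V_\nu}$ since $\sigma_\nu$ is irreducible. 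Hence the left side of \eqref{asymp3} is exactly $\varphi_{\nu,\lambda}(A^+(g))\,\tau_\nu^{-1}(\kappa_1(g)\kappa_2(g)).v$, and the claimed expansion is equivalent to the scalar asymptotic statement that, in the appropriate weighted $L^2$ sense over $\overline{A^+}$ with density $\Delta(t)$,
\begin{align*}
\varphi_{\nu,\lambda}(t)\simeq \mathbf{c}_\nu(\lambda){\rm e}^{(i\lambda-\rho)t}+\mathbf{c}_\nu(-\lambda){\rm e}^{(-i\lambda-\rho)t}.
\end{align*}

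Next I would establish this scalar expansion from the known closed form $\varphi_{\nu,\lambda}(t)=(\cosh t)^\nu\,\phi_\lambda^{(\rho-2,\nu+1)}(t)$ together with the classical asymptotics of the Jacobi function: for $\lambda\in\mathbb{R}\setminus\{0\}$, $\phi_\lambda^{(\alpha,\beta)}(t)=\mathbf{c}^{(\alpha,\beta)}(\lambda){\rm e}^{(i\lambda-\rho_{\alpha,\beta})t}+\mathbf{c}^{(\alpha,\beta)}(-\lambda){\rm e}^{(-i\lambda-\rho_{\alpha,\beta})t}+O({\rm e}^{-(\rho_{\alpha,\beta}+\varepsilon)t})$ as $t\to\infty$, where here $\rho_{\alpha,\beta}=\alpha+\beta+1=\rho+\nu$, so that after multiplying by $(\cosh t)^\nu\sim 2^{-\nu}{\rm e}^{\nu t}$ the leading exponents become ${\rm e}^{(\pm i\lambda-\rho)t}$ and the constants combine into $\mathbf{c}_\nu(\pm\lambda)$ as in \eqref{c}. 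This identifies the two leading terms and shows the remainder is $O({\rm e}^{-(\rho+\varepsilon)t})$. I would then check that such a remainder is negligible for the $B^\ast$-equivalence: since $\Delta(t)\sim c\,{\rm e}^{2\rho t}$, a term bounded by $C{\rm e}^{-(\rho+\varepsilon)t}$ contributes $\int_0^R {\rm e}^{-2(\rho+\varepsilon)t}\Delta(t)\,{\rm d}t=O(1)$, whence $\frac1R\int_0^R(\cdots)\Delta(t)\,{\rm d}t\to 0$; similarly the cross terms vanish by $\frac1R\int_0^R {\rm e}^{\pm 2i\lambda t}\,{\rm d}t\to0$. Finally, invoking the reformulation of $\simeq$ in terms of the polar integral $\frac1R\int_0^R\int_K\|\cdot\|_\nu^2\,{\rm d}k\,\Delta(t)\,{\rm d}t$ and using that $\tau_\nu$ is unitary (so the $\kappa_1,\kappa_2$ factors do not affect the norm), the scalar estimate transfers verbatim to the $V_\nu$-valued statement \eqref{asymp3}.

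The main obstacle I anticipate is not the $K$-radial bookkeeping, which is routine, but making the passage from the pointwise Jacobi asymptotics to the $B^\ast$-asymptotic rigorous and uniform enough: one must control the error term $O({\rm e}^{-(\rho+\varepsilon)t})$ with a constant that is locally uniform in $\lambda$ (or at least fixed for the given $\lambda$) and, more delicately, handle the behaviour near $t=0$ where the asymptotic expansion of $\phi_\lambda^{(\alpha,\beta)}$ is not valid — here one simply uses that $\varphi_{\nu,\lambda}$ and the two exponential terms are all smooth and bounded on $[0,1]$, so that region contributes $O(1)$ to the integral and disappears after dividing by $R$. A secondary point is verifying that the combination of the Jacobi $\mathbf{c}$-function, the duplication-formula constants, and the factor $2^{-\nu}$ from $(\cosh t)^\nu$ really reproduces the normalization of $\mathbf{c}_\nu$ in \eqref{c}; this is a bounded computation using $\Gamma$-function identities of the same type already used in Section 4, and I would record it as a short lemma or an inline verification rather than spelling it out in full.
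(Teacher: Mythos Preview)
Your proposal is correct and follows essentially the same route as the paper: both reduce via $\tau_\nu$-radiality to the scalar statement for $\varphi_{\nu,\lambda}$, invoke the Harish-Chandra expansion \eqref{A3} of the Jacobi function together with the error bound \eqref{A2'} to get a remainder of order ${\rm e}^{-(\rho+2)t}$ for $t\geq 1$, and then check that this remainder is $o(R)$ against $\Delta(t)$ while the region $t\in[0,1]$ contributes $O(1)$. The only superfluous step in your outline is the remark about cross terms $\frac1R\int_0^R {\rm e}^{\pm 2i\lambda t}\,{\rm d}t$: for the equivalence $\simeq$ you only need $\frac1R\int_{B(R)}\|\,\text{difference}\,\|^2\to 0$, and the difference itself already decays like ${\rm e}^{-(\rho+2)t}$, so no oscillatory cancellation is needed here.
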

\begin{proof}
Since $\Delta(t)\leq 2^3{\rm e}^{2\rho\,t}$, we get
\begin{align*}\begin{split}
\frac{1}{R}\int_{B(R)} \parallel  {\rm e}^{(i\lambda-\rho) A^+(g)}\tau_\nu^{-1}(\kappa_1(g)\kappa_2(g)).\,v\parallel^2\, {\rm d}g_K &=\frac{1}{R}\parallel v\parallel^2\int_0^R {\rm e}^{-2\rho\,t}\Delta(t){\rm d}t\\
&\leq 2^3\parallel v\parallel^2.
\end{split}\end{align*}
This shows that the right hand side of \eqref{asymp3} belongs to $B^*(G,\tau_\nu)$.\\
Since $\lambda\in \mathbb{R}\setminus \{0\}$, we may use the identity \eqref{A3} to write  
\begin{align*}\begin{split}
\varphi_{\nu,\lambda}(t)-\sum_{s\in \{\pm 1\}}\mathbf{c} _{\nu}(s\lambda){\rm e}^{(is\lambda-\rho)t}&
=\sum_{s\in \{\pm 1\}}\mathbf{c}_\nu(s\lambda)\left((2\cosh t)^\nu\Psi^{\rho-2,\nu+1}_{s\lambda}(t)-{\rm e}^{(is\lambda-\rho)t}\right)\\
&=\sum_{s\in \{\pm 1\}}\mathbf{c}_\nu(s\lambda){\rm e}^{(is\lambda-\rho)t}\left((1+{\rm e}^{-2t})^\nu {\rm e}^{(\rho+\nu-is\lambda)t}\Psi^{\rho-2,\nu+1}_{s\lambda}(t)-1\right).
\end{split}\end{align*}
It follows from \eqref{A2'} that 
\begin{align*}
\varphi_{\nu,\lambda}(t)-\sum_{s\in \{\pm 1\}}\mathbf{c} _{\nu}(s\lambda){\rm e}^{(is\lambda-\rho)t}=\sum_{s\in \{\pm 1\}}\mathbf{c}_\nu(s\lambda){\rm e}^{(is\lambda-\rho)t}\left( (1+{\rm e}^{-2t})^\nu-1)+{\rm e}^{-2t}E_{s\lambda}(t)\right),
\end{align*}
where $\mid E_{s\lambda}(t)\mid\leq 2^\nu C$ if $t\geq 1$. Therefore 
\begin{align*}
\mid \varphi_{\nu,\lambda}(t)-\sum_{s\in \{\pm 1\}}\mathbf{c} _{\nu}(s\lambda){\rm e}^{(is\lambda-\rho)t}\mid\leq C_{\nu,\lambda} {\rm e}^{-\rho t} {\rm e}^{-2t},
\end{align*}
if $t\geq 1$. This together with 
\begin{align*}
\mid \varphi_{\nu,\lambda}(t)-\sum_{s\in \{\pm 1\}}\mathbf{c} _{\nu}(s\lambda){\rm e}^{(is\lambda-\rho)t}\mid\leq C_{\nu,\lambda}{\rm e}^{-\rho t},
\end{align*}
for $t\in [0,1]$, imply that 
\begin{align*}\begin{split}
\lim_{R\rightarrow \infty}\frac{1}{R}&\int_{B(R)}\parallel \Phi_{\nu,\lambda}(g).\,v-\sum_{s\in \{\pm 1\}}c_\nu(s\lambda)e^{(is\lambda-\rho)A^+(g)}\tau^{-1}(\kappa_1(g)\kappa_2(g)).\,v\parallel_\nu^2\,{\rm d}g_K =\\
&=\parallel v\parallel^2 \lim_{R\rightarrow \infty}\frac{1}{R}\int_0^R\mid \varphi_{\nu,\lambda}(t)-\sum_{s\in \{\pm 1\}}c_\nu(s\lambda)e^{(is\lambda-\rho)t}\mid^2\,\Delta(t)\,{\rm d}t=0,
\end{split}\end{align*}
and the proof is finished.
\end{proof}
\begin{lem}
Let $g\in G,k\in K$ and $t$ a non negative real number . Then we have
\begin{align}\label{perturbe}
0\leq A^+(g^{-1}k\exp(tH))-H(g^{-1}k \exp(tH))\leq \frac{ 1+\mid g.0\mid}{ 1-\mid g.0\mid}{\rm e}^ {-2t},
\end{align}
\end{lem}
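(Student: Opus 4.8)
The plan is to apply the explicit formulas \eqref{component} for the Cartan and Iwasawa projections directly to $y:=g^{-1}k\exp(tH)$ and then reduce the two inequalities to elementary quaternionic estimates. Write $g^{-1}k=\begin{pmatrix}a&b\\ c&d\end{pmatrix}$ in $(n{+}1)\times(n{+}1)$ block form and put $c_1:=ce_1\in\mathbb{H}$. Multiplying on the right by $\exp(tH)$ (which fixes the coordinates $2,\dots,n$ and mixes the $1$st and $(n{+}1)$st by $\begin{pmatrix}\cosh t&\sinh t\\ \sinh t&\cosh t\end{pmatrix}$), a one--line computation shows that for $y$ the bottom--right entry is $c_1\sinh t+d\cosh t$ and that $H(y)=\log|(c_1+d)\mathrm{e}^{t}|$; hence by \eqref{component},
\[
\cosh A^+(y)=\tfrac12\,|P\mathrm{e}^{t}+Q\mathrm{e}^{-t}|,\qquad \mathrm{e}^{H(y)}=|P|\,\mathrm{e}^{t},\qquad P:=c_1+d,\ \ Q:=d-c_1,
\]
where I use $2(c_1\sinh t+d\cosh t)=P\mathrm{e}^{t}+Q\mathrm{e}^{-t}$. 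I would also record that, since $g^{-1}k$ preserves $\sum_{j\le n}|u_j|^2-|u_{n+1}|^2$, one has $|b|^2=|c|^2=|d|^2-1$, and that $g^{-1}k\cdot 0=bd^{-1}$ with $|g\cdot 0|=|g^{-1}\cdot 0|=|g^{-1}k\cdot 0|$ (as $k$ fixes $0$ and $G$ acts isometrically); writing $r:=|g\cdot 0|$ this yields $r=|b|/|d|=|c|/|d|$ and $|d|^2(1-r^2)=1$.

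For the lower bound $A^+(y)\ge H(y)$ I would prove the single scalar inequality $|P\mathrm{e}^{t}+Q\mathrm{e}^{-t}|\ge |P|\mathrm{e}^{t}+\tfrac{1}{|P|\mathrm{e}^{t}}$. Expanding $|P\mathrm{e}^{t}+Q\mathrm{e}^{-t}|^2=|P|^2\mathrm{e}^{2t}+2\,\mathrm{Re}(P\bar Q)+|Q|^2\mathrm{e}^{-2t}$ and using the (non--commutative) identity $\mathrm{Re}(P\bar Q)=|d|^2-|c_1|^2\ge |d|^2-|c|^2=1$ (valid because $|c_1|\le|c|$), together with $|P|^2|Q|^2=|P\bar Q|^2\ge(\mathrm{Re}\,P\bar Q)^2\ge 1$, one dominates $\bigl(|P|\mathrm{e}^{t}+\tfrac{1}{|P|\mathrm{e}^{t}}\bigr)^2$ term by term. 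Since $\mathrm{e}^{A^+(y)}$ and $\mathrm{e}^{-A^+(y)}$ are the two roots of $z^2-2\cosh A^+(y)\,z+1$, this inequality says exactly that $|P|\mathrm{e}^{t}$ lies between the roots, hence $\mathrm{e}^{H(y)}=|P|\mathrm{e}^{t}\le \mathrm{e}^{A^+(y)}$, that is $A^+(y)-H(y)\ge 0$.

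For the upper bound, now that $A^+(y)-H(y)\ge 0$, I would combine $u\le \mathrm{e}^{u}-1$ with $\mathrm{e}^{A^+(y)}\le 2\cosh A^+(y)=|P\mathrm{e}^{t}+Q\mathrm{e}^{-t}|$ and the triangle inequality to get
\[
A^+(y)-H(y)\ \le\ \frac{\mathrm{e}^{A^+(y)}}{|P|\mathrm{e}^{t}}-1\ \le\ \frac{|P|\mathrm{e}^{t}+|Q|\mathrm{e}^{-t}}{|P|\mathrm{e}^{t}}-1\ =\ \frac{|Q|}{|P|}\,\mathrm{e}^{-2t},
\]
and then bound $\dfrac{|Q|}{|P|}=\dfrac{|d-c_1|}{|d+c_1|}\le\dfrac{|d|+|c_1|}{|d|-|c_1|}\le\dfrac{|d|+|c|}{|d|-|c|}=\dfrac{1+r}{1-r}$, using $|c_1|\le|c|<|d|$, the monotonicity of $x\mapsto\frac{|d|+x}{|d|-x}$ on $[0,|d|)$, and $|c|/|d|=r=|g\cdot 0|$. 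The only step that is not bookkeeping is the scalar inequality $|P\mathrm{e}^{t}+Q\mathrm{e}^{-t}|\ge |P|\mathrm{e}^{t}+\tfrac{1}{|P|\mathrm{e}^{t}}$: one must gain the extra summand $\tfrac{1}{|P|\mathrm{e}^{t}}$ beyond the naive lower bound $|P|\mathrm{e}^{t}$, and this is precisely where the relation $|d|^2-|c|^2=1$ coming from $g^{-1}k\in Sp(n,1)$ and the careful computation of $\mathrm{Re}(P\bar Q)$ for the non--commuting quaternions $P,Q$ enter; I expect this to be the main point requiring attention.
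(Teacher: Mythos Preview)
Your argument is correct and follows essentially the same route as the paper: compute the bottom row of $g^{-1}k\exp(tH)$, apply \eqref{component}, bound $e^{A^+}\le 2\cosh A^+=|Pe^t+Qe^{-t}|\le |P|e^t+|Q|e^{-t}$, and reduce to estimating $|Q|/|P|$ by $(1+|g.0|)/(1-|g.0|)$. Two minor differences are worth noting: the paper does not prove the lower bound $A^+\ge H$ at all (it is taken for granted), so your quadratic-root argument using $\mathrm{Re}(P\bar Q)=|d|^2-|c_1|^2\ge 1$ and $|P||Q|\ge 1$ is a genuine addition; and for the final ratio the paper passes through the geometric identification $\frac{|d-cue_1v^{-1}|}{|d+cue_1v^{-1}|}=\frac{|1+\langle g.0,k.e_1\rangle|}{|1-\langle g.0,k.e_1\rangle|}$, whereas you bound it directly via $|c_1|\le|c|$ and $|c|/|d|=|g.0|$, which is arguably cleaner.
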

\begin{proof}
Let $g^{-1}=\begin{pmatrix}
a&b\\c&d,
\end{pmatrix}
$ and $k==\begin{pmatrix}
u&0\\O&v,
\end{pmatrix}
$, where $a, b, c$ and $d$ are $n\times n, n\times 1, 1\times n$ and $1\times 1$ matrices respectively.\\
A direct computation yields
\begin{align*}
g^{-1}k\exp(tH)=\begin{pmatrix}
\ast&\ast\, \ast\\
c_1&d_1
\end{pmatrix},
\end{align*}
where $c_1=c\,u\begin{pmatrix}
\cosh\,t&0\\
0&I_{n-1}
\end{pmatrix}$ and $d_1=\sinh t\,c ue_1+\cosh t\,dv$.\\
By \eqref{component} we have
$$
{\rm e}^{H(g^{-1}k \exp(tH))}={\rm e}^t\mid c u e_1+d v\mid,
$$
and
$$
{\rm e}^{A^+(g^{-1}k \exp(tH))}=\mid \sinh t\,c u e_1+\cosh t\,d v\mid+(\mid \sinh t\,c u e_1+\cosh t\,d v\mid^2-1)^\frac{1}{2}.
$$
From
\begin{align*}
{\rm e}^{A^+(g^{-1}k \exp(tH))-H(g^{-1}k \exp(tH))}=\frac{{\rm e}^{-t}}{\mid c u e_1+d v\mid}[\mid \sinh t\,c u e_1+\cosh t\,d v\mid+(\mid \sinh t\,c u e_1+\cosh t\,d v\mid^2-1)^\frac{1}{2}],
\end{align*}
together with
\begin{align*}
\mid \sinh t\,c u e_1+\cosh t\,d v\mid+(\mid \sinh t\,c u e_1+\cosh t\,d v\mid^2-1)^\frac{1}{2}&\leq 2 \mid \sinh t\,c u e_1v^{-1}+\cosh t\,d\mid\\
&\leq \mid c u e_1v^{-1}+d\mid {\rm e}^t+\mid d-c u e_1v^{-1}\mid {\rm e}^{-t}
\end{align*}
we deduce that 
\begin{align*}
{\rm e}^{(A^+(g^{-1}k \exp(tH))-H(g^{-1}k \exp(tH))}\leq 1+ \frac{\mid d-c u e_1v^{-1}\mid }{\mid c u e_1v^{-1}+d\mid}{\rm e}^{-2t}.
\end{align*}
Noting that $(g.0)^\ast=-(d^{-1}c)$, and $k.e_1=u e_1v^{-1}$, we get
\begin{align*}\begin{split}
{\rm e}^{(A^+(g^{-1}k \exp(tH))-H(g^{-1}k \exp(tH))}&\leq 1+\frac{\mid 1+<g.0,k.e_1>\mid}{\mid 1-<g.0,k.e_1>\mid}{\rm e}^{-2t}\\
&\leq 1+\frac{1+\mid g.0\mid}{1-\mid g.0\mid}{\rm e}^{-2t},
\end{split}\end{align*}
from which we deduce \eqref{perturbe}, and the proof of the lemma is finished.
\end{proof}
\textbf{Proof of the Key Lemma.}
Since $B^\ast(G,\tau_\nu)$ is $G$-invariant, we may apply Proposition \ref{asymp3b} to get
$$ 
\Phi_{\nu,\lambda}(g^{-1}x)v\simeq \tau^{-1}_\nu(\kappa_1(g^{-1}x)\kappa_2(g^{-1}x)\sum_{s\in\{\pm\}}\mathbf{c}_\nu(s\lambda){\rm e}^{(is\lambda-\rho)A^+(g^{-1}x)}v.
$$
Thus it suffices to show that 
\begin{align}\label{key1}
\tau^{-1}_\nu(\kappa_1(g^{-1}x)\kappa_2(g^{-1}x)\sum_{s\in\{\pm\}}\mathbf{c}_\nu(s\lambda){\rm e}^{(is\lambda-\rho)A^+(g^{-1}x)}v\simeq \tau_\nu^{-1}(k_2(x))\sum_{s\in \{\pm 1\}}\mathbf{c}_\nu(s\lambda){\rm e}^{(is\lambda-\rho)A^+(x)}f^g_{s\lambda,v}(k_1(x)),
\end{align}
Note that 
\begin{align*}
\tau_\nu^{-1}[k_1(g^{-1}k_1(x){\rm e}^{A^+(x)}k_2(x)) k_2(g^{-1}k_1(x){\rm e}^{A^+(x)}k_2(x))]=\tau_\nu^{-1}[k_1(g^{-1}k_1(x){\rm e}^{A^+(x)})k_2(g^{-1}k_1(x){\rm e}^{A^+(x)})k_2(x))],
\end{align*}
$x=k_1(x){\rm e}^{A^+(x)}k_2(x)$.\\
Henceforth \eqref{key1} is equivalent to 
\begin{align}\begin{split}\label{key2}
\tau_\nu^{-1}[k_1(g^{-1}k_1(x){\rm e}^{A^+(x)}) k_2(g^{-1}k_1(x){\rm e}^{A^+(x)})]&\sum_{s\in \{\pm 1\}}\mathbf{c}_\nu(s\lambda){\rm e}^{(is\lambda-\rho)A^+(g^{-1}k_1(x){\rm e}^{A^+(x)})}\,v\\
&\simeq \sum_{s\in \{\pm 1\}}\mathbf{c}_\nu(s\lambda){\rm e}^{(is\lambda-\rho)A^+(x)} f^g_{s\lambda,v}(k_1(x)) 
\end{split}\end{align}
We write the left hand side of \eqref{key2} as 
\begin{align*}
\sum_{s\in \{\pm 1\}}\mathbf{c}_\nu(s\lambda){\rm e}^{(is\lambda-\rho)A^+(x)} f^g_{s\lambda,v}(k_1(x)) +r_g(x)v,
\end{align*}
where
\begin{align}\begin{split}\label{key3}
r_g(x)=&\tau_\nu^{-1}[k_1(g^{-1}k_1(x){\rm e}^{A^+(x)})  k_2(g^{-1}k_1(x){\rm e}^{A^+(x)})]\sum_{s\in \{\pm 1\}}\mathbf{c}_\nu(s\lambda){\rm e}^{(is\lambda-\rho)A^+(g^{-1}k_1(x){\rm e}^{A^+(x)})}\\
&-\sum_{s\in \{\pm 1\}}\mathbf{c}_\nu(s\lambda){\rm e}^{(is\lambda-\rho)[A^+(x)+H(g^{-1}k_1(x))]}\tau_\nu^{-1}(\kappa(g^{-1}k_1(x)), \quad x\in G
\end{split}\end{align}
To finish the proof we show that for each $g\in G$, $r_g\simeq 0$.\\
Noting that 
$$
H(g^{-1}k_1(x){\rm e}^{A^+(x)})=H(g^{-1}k_1(x))+A^+(x),
$$
we rewrite $r_g$ as 
\begin{align*}\begin{split}
r_g(x)&=[\tau_\nu^{-1}(k_1(g^{-1}k_1(x){\rm e}^{A^+(x)})  k_2(g^{-1}k_1(x){\rm e}^{A^+(x)}))-\tau_\nu^{-1}(\kappa(g^{-1}k_1(x))]\sum_{s\in \{\pm 1\}}\mathbf{c}_\nu(s\lambda){\rm e}^{(is\lambda-\rho)H(g^{-1}k_1(x){\rm e}^{A^+(x)})}\\
&+\tau_\nu^{-1}(k_1(g^{-1}k_1(x){\rm e}^{A^+(x)})  k_2(g^{-1}k_1(x){\rm e}^{A^+(x)}))\left(\sum_{s\in \{\pm 1\}}\mathbf{c}_\nu(s\lambda)[{\rm e}^{(is\lambda-\rho)A^+(g^{-1}k_1(x){\rm e}^{A^+(x)})}-{\rm e}^{(is\lambda-\rho)H(g^{-1}k_1(x){\rm e}^{A^+(x)})}]\right)\\
&=:I_g(x)+J_g(x).
\end{split}\end{align*}
Using the following result
\begin{lem}\label{rep}
Let $g=\begin{pmatrix}
a&b\\c&d
\end{pmatrix}\in Sp(n,1)$. Then we have
\begin{align}\label{k1}
\tau_\nu(\kappa_1(g)\kappa_2(g))=\tau_\nu(\frac{d}{\mid d\mid})
\end{align}
\begin{align}\label{k2}
\tau_\nu(\kappa(g))=\tau_\nu(\frac{ce_1+d}{\mid ce_1+d\mid})
\end{align}
\begin{align}\label{k3}
\lim_{R\rightarrow \infty}\tau_\nu(\kappa_1(g\exp(RH))\kappa_2(g\exp(RH)))=\tau_\nu(\kappa(g)).
\end{align}
\end{lem}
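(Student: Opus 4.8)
The plan is to establish all three formulas by direct computation in the defining representation of $G=Sp(n,1)$ on $\mathbb{H}^{n+1}$, exploiting throughout that $\tau_\nu$ is a homomorphism which is trivial on the $Sp(n)$–factor of $K$, so that only the $Sp(1)$–entry of each Cartan or Iwasawa datum is relevant. Identities \eqref{k1} and \eqref{k2} carry the content; \eqref{k3} will then follow by combining them with an elementary limit.

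For \eqref{k1} I would substitute the block form of $a_t$ into the Cartan decomposition $g=\kappa_1(g)\,a_{A^+(g)}\,\kappa_2(g)$, writing $\kappa_i(g)=(u_i,v_i)\in Sp(n)\times Sp(1)$ and viewing $a_t$ as a $2\times2$ block matrix with blocks of size $n$ and $1$. Reading off the $(n+1,n+1)$ entry on both sides gives $d=v_1\,\cosh(A^+(g))\,v_2$; since $|v_1|=|v_2|=1$ this recovers $|d|=\cosh A^+(g)$ in agreement with \eqref{component}, and, the positive real $\cosh A^+(g)$ being central in $\mathbb{H}$, $d/|d|=v_1v_2$. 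Hence $\tau_\nu(\kappa_1(g)\kappa_2(g))=\tau_\nu(v_1)\tau_\nu(v_2)=\tau_\nu(d/|d|)$.

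For \eqref{k2} the key point is to locate the line in $\mathbb{H}^{n+1}$ attached to $P=MAN$. Let $f_+=e_1+e_{n+1}$ (first plus last standard basis vectors); this is a null vector and $Hf_+=f_+$. Since $\mathrm{ad}(H)$ has only the eigenvalues $+1,0,-1$ on $\mathbb{H}^{n+1}$, every $X\in\mathfrak{n}=\mathfrak{g}_\alpha\oplus\mathfrak{g}_{2\alpha}$ must annihilate $f_+$ (otherwise $Xf_+$ would be an $H$–eigenvector with eigenvalue $\ge 2$); equivalently, $\mathbb{H}f_+$ corresponds to the boundary point of $\mathbb{B}(\mathbb{H}^n)$ fixed by $P$. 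Consequently $N$ fixes $f_+$ while $a_tf_+=e^tf_+$, so from $g=\kappa(g)\,e^{H(g)}\,n(g)$ together with $e^{H(g)}=|ce_1+d|$ (from \eqref{component}) one gets $gf_+=|ce_1+d|\,\kappa(g)f_+$. Comparing the last ($\mathbb{H}$–valued) coordinates of the two sides — that of $gf_+=ge_1+ge_{n+1}$ is $ce_1+d$, while $K$ acts on $\mathbb{H}^{n+1}=\mathbb{H}^n\oplus\mathbb{H}$ by $(u,v)\colon(x,y)\mapsto(ux,vy)$, so that of $\kappa(g)f_+$ is the $Sp(1)$–entry $v$ of $\kappa(g)$ — yields $v=(ce_1+d)/|ce_1+d|$, and therefore $\tau_\nu(\kappa(g))=\tau_\nu(v)=\tau_\nu\big((ce_1+d)/|ce_1+d|\big)$.

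Finally, for \eqref{k3} I would apply \eqref{k1} to $g\exp(RH)=g\,a_R$: multiplying the bottom row of $g$ against the last column of $a_R$, its $(n+1,n+1)$ entry is $(ce_1)\sinh R+d\cosh R$, so $\tau_\nu\big(\kappa_1(g\exp(RH))\kappa_2(g\exp(RH))\big)=\tau_\nu\!\big((ce_1\tanh R+d)/|ce_1\tanh R+d|\big)$. Letting $R\to\infty$, using that $ce_1+d=e^{H(g)}\ne 0$ and the continuity of $\tau_\nu$, the right–hand side tends to $\tau_\nu\big((ce_1+d)/|ce_1+d|\big)=\tau_\nu(\kappa(g))$ by \eqref{k2}. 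The only step demanding care is the identification in \eqref{k2} of the line $\mathbb{H}f_+$ as the one fixed by $N$ and scaled by $A$; once this is in place, everything else reduces to reading off a single matrix entry.
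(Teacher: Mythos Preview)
Your proof is correct and follows essentially the same route as the paper: for \eqref{k1} and \eqref{k3} you read off the $(n+1,n+1)$ block entry exactly as the paper does, and for \eqref{k2} your vector $f_+=e_1+e_{n+1}$ is precisely the column $\begin{pmatrix}e_1\\1\end{pmatrix}$ to which the paper applies $g$ (your eigenvalue argument that $N$ fixes $f_+$ makes explicit what the paper takes for granted). One cosmetic point: in your justification for \eqref{k2} you write ``$\operatorname{ad}(H)$ has eigenvalues $+1,0,-1$ on $\mathbb{H}^{n+1}$'' when you mean the action of $H$ itself on $\mathbb{H}^{n+1}$; the subsequent weight-shift argument ($H(Xf_+)=(1+\alpha(H))Xf_+$ forces $Xf_+=0$) is nonetheless correct.
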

we easily see that $I_g v\simeq 0$.\\
We have 
\begin{align*}\begin{split}
J_g(x)=&\tau_\nu^{-1}(k_1(g^{-1}k_1(x){\rm e}^{A^+(x)})  k_2(g^{-1}k_1(x){\rm e}^{A^+(x)})){\rm e}^{(is\lambda-\rho)H(g^{-1}k_1(x){\rm e}^{A^+(x)})}\\
&\sum_{s\in \{\pm 1\}}\mathbf{c}_\nu(s\lambda)\left({\rm e}^{(is\lambda-\rho)[A^+(g^{-1}k_1(x){\rm e}^{A^+(x)})-H(g^{-1}k_1(x){\rm e}^{A^+(x)})]}-1\right).
\end{split}\end{align*}
As $\tau_\nu$ is unitary and using polar coordinates we see that $\displaystyle\frac{1}{R}\int_{B(R)}\parallel J_g(x)v\parallel_\nu^2 \,{\rm d}x_K$ is less than 
\begin{align*}
2\parallel v\parallel^2 \frac{\mid \mathbf{c}_\nu(\lambda)\mid^2}{R}\int_0^R\int_K{\rm e}^{-2\rho H(g^{-1}k{\rm e}^{tH})}\mid {\rm e}^{(is\lambda-\rho)[A^+(g^{-1}k{\rm e}^{tH})-H(g^{-1}k{\rm e}^{tH})]}-1\mid^2\, {\rm d}k\,\Delta(t){\rm d}t.
\end{align*}
From the estimate 
\begin{align*}
\mid {\rm e}^{(is\lambda-\rho)[A^+(g^{-1}k{\rm e}^{tH})-H(g^{-1}k{\rm e}^{tH})]}-1\mid\leq C (\mid\lambda\mid+\rho) \mid A^+(g^{-1}k{\rm e}^{tH})-H(g^{-1}k{\rm e}^{tH})\mid
\end{align*}
together with  \eqref{perturbe} we get 
\begin{align*}
\frac{1}{R}\int_{B(R)}\parallel J_g(x)v\parallel_\nu^2 \,{\rm d}x_K\leq     
\left( C (\mid\lambda\mid+\rho)\frac{1+\mid g.0\mid}{1-\mid g.0\mid}\right)^2\frac{1}{R}\int_0^R\int_K{\rm e}^{-2\rho H(g^{-1}k)}{\rm e}^{-2(\rho+2)t}\Delta(t)\,{\rm d}k\,{\rm d}t. 
\end{align*}
As $\displaystyle \int_K {\rm e}^{-2\rho H(g^{-1}k)}\, {\rm d}k=1$ and $\Delta(t)\leq 2^3 {\rm e}^{2\rho t}$ we obtain
\begin{align*}
\lim_{R\rightarrow \infty}\frac{1}{R}\int_{B(R)}\parallel J_g(x)v\parallel_\nu^2 \,{\rm d}x_K =0.
\end{align*}
This shows that $ J_g\simeq 0$. Therefore we have proved that for each $g\in G, r_g\simeq 0$, as to be shown.\\
It remains to prove Lemma \ref{rep}.\\ 
\textbf{Proof of Lemma \ref{rep}.} If $g=\begin{pmatrix}
a&b\\c&d
\end{pmatrix}=\begin{pmatrix}
u_1&0\\
0&v_1
\end{pmatrix}a_t\begin{pmatrix}
u_2&0\\
0&v_2
\end{pmatrix}$ with respect to the Cartan decomposition $G=KAK$. Then we easily see that $d=\cosh t\, v_1v_2$ and \eqref{k1} follows. Analogously if $g=\begin{pmatrix}
a&b\\c&d
\end{pmatrix}=\begin{pmatrix}
u&0\\
0&v
\end{pmatrix}a_t\,n$ with respect to the Iwasawa decomposition. Then from $g.e_1=\begin{pmatrix}
ae_1+b\\ce_1+d
\end{pmatrix}={\rm e}^t\begin{pmatrix}
u\\v
\end{pmatrix}$ we get ${\rm e}^t\,v=ce_1+d$ and \eqref{k2} follows. \\ 
We have 
$$
g\exp(RH)=\begin{pmatrix}
\ast&\ast\ast\\
\ast\ast\ast&\sinh Re_1+\cosh R d
\end{pmatrix}
$$
Then  \eqref{k1} imply that $\tau_\nu(\kappa_1(g)\kappa_2(g))=\tau_\nu(\frac{\tanh R ce_1+d}{\mid \tanh R ce_1+d\mid})$. Thus $\lim_{R\rightarrow \infty}\tau_\nu(\kappa_1(g)\kappa_2(g))=\tau_\nu(\frac{ce_1+d}{\mid ce_1+d\mid})$. This finishes the proof of Lemma \ref{rep}, and the proof of the Key Lemma is completed.
\section{Appendix}
In this section we collect some results on the Jacobi functions, referring to \cite{Ko} for more details.\\
For $\alpha,\beta,\lambda\in\mathbb{C}; \alpha\neq -1,-2,\cdots$ and $t\in\mathbb{R}$, the Jacobi function is defined by 
\begin{align*}
\phi_\lambda^{(\alpha,\beta)}(t)=\,_2 F_1(\frac{i\lambda+\rho_{\alpha,\beta}}{2},\frac{-i\lambda+\rho_{\alpha,\beta}}{2}; \alpha+1; -\sinh^2 t),
\end{align*}
where $_2 F_1$ is the Gauss hypergeometric function and $\rho_{\alpha,\beta}=\alpha+\beta+1$.\\
The Jacobi function $\phi_\lambda^{(\alpha,\beta)}$ is the unique even smooth function on $\mathbb{R}$ which satisfy $\phi_\lambda^{(\alpha,\beta)}(0)=1$ and the differential equation
\begin{equation}
\tag{A1}
\lbrace\frac{d^2}{dt^2}+[(2\alpha+1)\coth t+(2\beta+1)\tanh t]\frac{d}{dt}+ \lambda^2+\rho_{\alpha,\beta}^2\rbrace \phi_\lambda^{(\alpha,\beta)}(t)=0.
\label{A1}
\end{equation}
For $\lambda\notin -i\mathbb{N}$ another solution $\Psi_\lambda^{\alpha,\beta}$  of \eqref{A1}  such that 
\begin{equation}
\tag{A2}
\Psi_\lambda^{\alpha,\beta}(t)={\rm e}^{(i\lambda-\rho_{\alpha,\beta})t}(1+\circ(1)), \quad \textit{as}\quad  t\rightarrow \infty
\label{A2}
\end{equation} 
is given by 
\begin{align*}
\Psi_\lambda^{\alpha,\beta}(t)=(2\sinh t)^{i\lambda-\rho_{\alpha,\beta}}\,\,_2 F_1(\frac{\rho_{\alpha,\beta}-i\lambda}{2},\frac{\beta-\alpha+1-i\lambda}{2}; 1-i\lambda; -\frac{1}{\sinh^2 t}).
\end{align*}
Moreover there exists a constant $C>0$ such that for all $\lambda\in\mathbb{R}$ and all $t\geq 1$ we have
\begin{align*}
\tag{A2'}
\Psi_\lambda^{\alpha,\beta}(t)={\rm e}^{(i\lambda-\rho_{\alpha,\beta})t}(1+{\rm e}^{-2t}\Theta_\lambda(t)), \quad \textit{with}\quad \mid \Theta_\lambda(t)\mid \leq C.
\label{A2'}
\end{align*}
For $\lambda\notin i\mathbb{Z}$, we have 
\begin{align}
\tag{A3}
\phi_\lambda^{(\alpha,\beta)}(t)=\sum_{s=\pm 1}\mathbf{c}_{\alpha,\beta}(s\lambda)\Psi_{s\lambda}^{\alpha,\beta}(t)
\label{A3}
\end{align}
where 
\begin{align*}
\mathbf{c}_{\alpha,\beta}(\lambda)=\frac{2^{\rho_{\alpha,\beta}-i\lambda}\,\Gamma(\alpha+1)\Gamma(i\lambda)}{\Gamma(\frac{i\lambda+\rho_{\alpha,\beta}}{2})\Gamma(\frac{i\lambda+\alpha-\beta+1}{2})}.
\end{align*}
For $\Re(i\lambda)>0$,  the asymptotic behaviour of $\phi_\lambda^{(\alpha,\beta)}$ as $t\rightarrow \infty$ is then given by
\begin{align}
\tag{A4}
\lim_{t\rightarrow \infty}{\rm e}^{(\rho_{\alpha,\beta}-i\lambda)t}\phi_\lambda^{(\alpha,\beta)}(t)=\mathbf{c}_{\alpha,\beta}(\lambda).
\label{A4}
\end{align}
Let $D_e(\mathbb{R})$ denote the space of even smooth function with compact support on $\mathbb{R}$. For $f\in D_e(\mathbb{R})$, the Fourier-Jacobi transform $\mathcal{J}^{\alpha,\beta}f$ ($\lambda\in \mathbb{C}$) is defined  by
\begin{align}
\tag{A5}
\mathcal{J}^{\alpha,\beta}f(\lambda)=\int_0^\infty f(t)\phi_\lambda^{(\alpha,\beta)}(t)\Delta_{\alpha,\beta}(t)\,{\rm d}t,
\label{A5}
\end{align}
where $\Delta_{\alpha,\beta}(t)=(2\sinh t)^{2\alpha+1}(2\cosh t)^{2\beta+1}$.\\
In the sequel, we assume that $\alpha>-1, \beta\in \mathbb{R}$. 
Then the meromorphic function $\mathbf{c}_{\alpha,\beta}(-\lambda)^{-1}$ has only simple poles for $\Im \lambda\geq 0$ which occur  in the set
\begin{align*}
D_{\alpha,\beta}=\{\lambda_k=i(\mid \beta\mid -\alpha-1-2k); k=0,1,\cdots, \mid \beta\mid -\alpha-1-2k>0\}.
\end{align*}
(If $\mid \beta\mid\leq \alpha+1$, then $D_{\alpha,\beta}$ is empty).\\
The following inversion  and Plancherel formulas for the Jacobi transform hold for every $f\in D_e(\mathbb{R})$:
\begin{align}
\tag{A6}
f(t)=\frac{1}{2\pi}\int_0^\infty (\mathcal{J}^{\alpha,\beta}f)(\lambda)\,\phi_\lambda^{(\alpha,\beta)}(t) \mid \mathbf{c}_{\alpha,\beta}(\lambda)\mid^{-2}\, {\rm d}\lambda
+\sum_{\lambda_k\in D_{\alpha,\beta}}d_k (\mathcal{J}^{\alpha,\beta}f)(\lambda_k)\,\phi_{\lambda_k}^{(\alpha,\beta)}(t),
\label{A6}
\end{align}
\begin{align}
\tag{A6'}
\int_0^\infty \mid f(t)\mid^2\, \Delta(t)\, {\rm d}t=\frac{1}{2\pi}\int_0^\infty \mid(\mathcal{J}^{\alpha,\beta}f)(\lambda)\mid^2\,\mid \mathbf{c}_{\alpha,\beta}(\lambda)\mid^{-2}\, {\rm d}\lambda+\sum_{\lambda_k\in D_{\alpha,\beta}}d_k \mid(\mathcal{J}^{\alpha,\beta}f)(\lambda_k)\mid^2
\label{A6'}
\end{align}
where $d_k=-i\,\textit{Res}_{\lambda=\lambda_k}( \mathbf{c}_{\alpha,\beta}(\lambda)\mathbf{c}_{\alpha,\beta}(-\lambda))^{-1}$, is given explicitly by
\begin{align}
\tag{A7}
d_k=(\beta-\alpha-2k-1)\frac{2^{-2(\alpha+\beta)}\Gamma(\alpha+k+1)\Gamma(\beta-k)}{\Gamma^2(\alpha+1)\Gamma(\beta-\alpha-k)k!}.
\label{A7}
\end{align}

\end{document}